\documentclass[11pt,reqno]{amsart}
\usepackage{amssymb}
\usepackage{amscd}
\usepackage[latin1]{inputenc}
\usepackage{amsmath}
\usepackage{tikz}
\usepackage{amsfonts}
\usepackage{faktor}
\usepackage{latexsym}
\usepackage{verbatim}
\usepackage{graphicx}
\usepackage{epsfig}

\usepackage{enumitem}

\usepackage[linktocpage=true]{hyperref}
\usepackage[capitalize]{cleveref}
\hypersetup{citecolor = black,colorlinks,linkcolor = black,urlcolor = black}

\newtheorem*{T1}{Theorem~\ref{HD}}

\newtheorem{theorem}{Theorem}[section]

\newtheorem{lemma}[theorem]{Lemma}
\newtheorem{question}[theorem]{Question}
\newtheorem{proposition}[theorem]{Proposition}
\newtheorem{corollary}[theorem]{Corollary}

\usepackage[hmargin=3cm,vmargin=3cm]{geometry}

\theoremstyle{definition}
\newtheorem{definition}[theorem]{Definition}
\newtheorem{example}[theorem]{Example}

\theoremstyle{plain}
\newcounter{MainTheoremCounter}

\begin{document}

\title[Periodic structure and Schr\"{o}dinger operators for codings of circle rotations]{Periodic structure and Schr\"{o}dinger operators for codings of circle rotations}

\author{Luke Hetzel}
\address{Luke Hetzel\\
Department of Mathematics\\
University of Denver\\
2390 S. York St.\\
Denver, CO 80208}
\email{luke.hetzel@du.edu}

\author{Ronnie Pavlov}
\address{Ronnie Pavlov\\
Department of Mathematics\\
University of Denver\\
2390 S. York St.\\
Denver, CO 80208}
\email{ronnie.pavlov@du.edu}
\urladdr{http://www.math.du.edu/$\sim$rpavlov/}

\thanks{The second author gratefully acknowledges the support of a Simons Foundation Collaboration Grant.}

\keywords{Schr\"{o}dinger operators, Gordon lemma, circle rotations, symbolic dynamics}
\renewcommand{\subjclassname}{MSC 2020}
\subjclass[2020]{Primary: 37A30, Secondary: 37B10, 47B36}

\begin{abstract}
We consider, for any irrational $\alpha$ and interval $I \subset \mathbb{T}$, the 2-interval coding subshift 
$X^{(I, \alpha)}$ induced by coding orbits under repeated rotation by $\alpha$ via membership in $I$ or $I^c$. Each sequence $c \in X^{(I, \alpha)}$ has an associated Schr\"{o}dinger operator $H_c$, and in \cite{kaminaga} it was proved that if the continued fraction of $\alpha$ has digits with limsup at least $4$, then almost every $c \in X^{(I, \alpha)}$ has so-called $3$-block Gordon structure, which implies that the operator $H_c$ has no eigenvalues. 

We significantly improve this result by completely characterizing almost-sure $3$-block Gordon structure, proving that in fact it holds for all $(\alpha,I)$ except for a countable set of pairs $(\alpha, |I|)$ where $\alpha$ is M\"{o}bius equivalent to the silver mean and $|I| \in \mathbb{Z}\alpha + f(\alpha)$ where $f(\alpha)$ is a specific infinite series
taking value either $\frac{1}{2}, \frac{\alpha}{2}$, or $\frac{\alpha+1}{2}$.

We also show that for a set of $\alpha$ of full measure, and for every $I$, the set of points whose orbit codings do not have $3$-block Gordon structure has Hausdorff dimension bounded away from $1$.
\end{abstract}

\maketitle

\section{Introduction}\label{sec:intro}

A general heuristic used in the study of Schr\"{o}dinger operators is that the less complex the underlying sequence, the smaller the spectrum of the associated operator. One of the most useful tools in proving such results is the $3$-block Gordon lemma (\cite{3block}). A sequence $x$ on a finite alphabet $A$ is said to have {\bf 3-block Gordon structure} if there exists a sequence $(q_k)$ such that for every $k$ and every $i \in (-q_k, 0]$, $x(i) = x(i - q_k) = x(i + q_k)$. The $3$-block Gordon lemma states that for $x$ with $3$-block Gordon structure, the operator $H_x$ has no eigenvalues. 

Outside of Schr\"{o}dinger operators, the study of such periodic structures has found multiple applications. For instance, in \cite{trans} and \cite{roy}, similar properties are applied to questions of transcendence and simultaneous Diophantine approximation.

From the viewpoint of symbolic dynamics, it is natural to study an entire subshift $X$ (closed shift-invariant set of sequences) and try to prove that most/all sequences in $X$ have desirable properties. Since it is not possible for all sequences of an infinite aperiodic subshift to have $3$-block Gordon structure (see \cite{no3block}), existing results generally prove that $3$-block Gordon structure holds for a set of positive measure for some natural ergodic measure(s). Then one can use ergodicity to show that almost every $x \in X$ has a shift with the structure. Such results have been proved for many natural `low-complexity' subshifts, such as Sturmian shifts (\cite{sturm}), some simple Toeplitz shifts (\cite{sellthesis}), and subshifts induced by primitive substitutions of index greater than $3$ (\cite{index3}).

A natural class for which only partial results exist are codings of a circle rotation by $\alpha$ by two intervals $I$ and $I^c$; we denote the subshift of such codings by $X^{(I,\alpha)}$. When $\alpha \in \mathbb{Q}$, all points in $X^{(I,\alpha)}$ trivially have $3$-block Gordon structure, so we for now on restrict to the case of irrational $\alpha$.
The best known result is proved in \cite{kaminaga}: if the rotation number $\alpha$ has continued fraction expansion with digits $a_k$ and $\limsup a_k \geq 4$, then $3$-block Gordon structure holds for a set with positive measure, and therefore absence of eigenvalues of the associated operator holds almost-surely. 
In particular, this demonstrated that for a set of $\alpha$ of full Lebesgue measure, almost-sure absence of eigenvalues holds for every $I$.

Our main result improves this significantly by completely characterizing such subshifts for which almost all $x \in X$ (up to a shift) have $3$-block Gordon structure. In fact, this holds for all but a very specific countable family of pairs
$(\alpha, |I|)$. Our proofs rely only on elementary results about continued fractions, primarily analysis of gaps in sets of the form $\{0, \alpha, 2\alpha, \ldots, n\alpha\}$ as treated in the Three-gap Theorem. (The statement relies on $f$ defined in Definition~\ref{fdef}.)

\begin{theorem}\label{mainthm}
The subshift $X^{(I, \alpha)}$ fails to have almost-sure $3$-block Gordon structure (up to a shift) if and only if  
$\lim a_n = 2$ and $|I| \in \mathbb{Z}\alpha + f(\alpha)$, where 
$f(\alpha) \in \{1/2, \alpha/2, (\alpha+1)/2\}$.
\end{theorem}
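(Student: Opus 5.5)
We would translate $3$-block Gordon structure into a condition on the orbit of the coded point. Write $x_c(n) = \chi_I(\theta + n\alpha)$ and take $q_k$ to be the continued fraction denominators, or small multiples and sums of them. The equalities $x(i)=x(i\pm q)$ for $i\in(-q,0]$ fail exactly when some point among $\theta + i\alpha$, $i \in (-2q, q]$, lies in a window of width $\|q\alpha\|$ adjacent to one of the endpoints of $I$, on the appropriate side. Since $\|q_k\alpha\|\approx 1/q_{k+1}$ and $3q_k$ points are involved, a bad set of measure of order $3q_k\|q_k\alpha\| < 1$ is not automatically small. By ergodicity it is enough to show that, for a positive-measure set of $\theta$, infinitely many $k$ are good. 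So we want a lower bound $\liminf_k \mu(G_k) > 0$, where $G_k$ is the set of $\theta$ that are good at scale $q_k$; then $\mu(\limsup G_k) > 0$ follows. The tool is the Three-gap Theorem applied to $\{i\alpha : -2q \le i \le q\}$, which gives the exact structure of the forbidden union $\bigcup_i (E - i\alpha + [0,\|q\alpha\|))$, where $E$ is the set of the two endpoints.

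For the ``if'' direction of Theorem~\ref{mainthm}, first assume $\limsup a_n \ge 3$. Along digits $a_{k+1} \ge 3$, the $3q_k$ translates of each endpoint window have total length at most about $3q_k/q_{k+1} \le 3/(a_{k+1}+\ldots) < 1$ once overlaps are accounted for. Choosing between the candidate periods $q_k$ and $q_k+q_{k-1}$ then gives a uniformly positive good measure; this refines the argument of \cite{kaminaga}, and it is where $|I|$ is irrelevant. Next consider the remaining case $a_n \le 2$ eventually, with infinitely many $a_n = 1$, or all $a_n = 2$ eventually. In this case the two endpoint windows can each nearly fill the circle, and a gain comes only from overlap between the window families of the left endpoint $\beta$ and of the right endpoint $\beta+|I|$. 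These families overlap heavily when $|I| \equiv j\alpha \pmod 1$ is close to a point of the orbit set at the relevant scale, and we must exploit the translates $\pm$ with some shift. We would compute the measure of the good set at scale $k$ explicitly as a function of $\|\,|I| - j\alpha\|$ measured in units of $\|q_k\alpha\|$, that is, via the Ostrowski expansion of $|I|$ with respect to $\alpha$. We would then show that this measure is bounded below along a subsequence unless the Ostrowski digits of $|I|$ follow one specific eventually periodic pattern. Finally, we would identify the sum of that pattern with $f(\alpha)$ from Definition~\ref{fdef}, up to a translate in $\mathbb{Z}\alpha$, with the three values corresponding to the parity classes of $(q_k, q_{k-1})$.

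For the ``only if'' direction, take $a_n = 2$ eventually and $|I| \in \mathbb{Z}\alpha + f(\alpha)$. We would show that for every admissible period $q$ (not only the $q_k$: any $q$ with $\|q\alpha\|$ small must be near a combination $mq_k + q_{k-1}$, by best approximation), the bad set has full measure up to a summable error. Then Borel--Cantelli gives that almost no $\theta$ has infinitely many good $q$. The key point is that for these special lengths the endpoint windows tile the circle at every scale in a self-similar way, which we verify by induction on $k$ using $q_{k+1} = 2q_k + q_{k-1}$.

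The main obstacle is the exact overlap computation in the $a_n \in \{1,2\}$ regime. We must handle all candidate periods $q$, not just the convergent denominators, and show that failure at every scale forces the rigid Ostrowski pattern. Our first attempt would be to reduce to $q \in \{q_k, q_k+q_{k-1}, 2q_k+q_{k-1}\}$ and to carry out the case analysis of window positions with respect to the three gap lengths.
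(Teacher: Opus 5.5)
Your overall framework matches the paper's: good sets at the scales $q_k$, $\limsup$ plus ergodicity, three-gap structure, and the itinerary of $|I|$ through long and short gaps. However, the proposal announces the decisive step instead of proving it, and the two steps you do sketch fail as written.

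Take first the case $\limsup a_n\ge 3$. The bad set at scale $q_k$ is $T_k+(-\epsilon_k,\epsilon_k)$ with $T_k=Q_k\cup(Q_k+|I|)$, i.e.\ $2q_k$ windows of length $2\epsilon_k$. Their total length $4q_k\epsilon_k$ can exceed $1$ when $a_{k+1}=3$ (if all $a_n=3$ it tends to $4/\sqrt{13}$). So your per-endpoint estimate $3q_k/q_{k+1}<1$ says nothing about the union over both endpoints. Switching to the period $q_k+q_{k-1}$ does not help either, since it only makes $\|q\alpha\|$ larger. The missing idea is the counting argument behind Theorem~\ref{case1}. When $a_{k+1}=3$ and $a_{k+2}\le 3$, short gaps of $Q_k$ exceed $2\epsilon_k$ and long ones exceed $4\epsilon_k$. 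Eliminating all excess would therefore need a point of $Q_k+|I|$ in each of the $q_k-q_{k-1}$ short gaps and two in each of the $q_{k-1}$ long gaps. Only $q_k$ points are available, so $\beta_k$ is at least a fixed multiple of $q_{k-1}\epsilon_k$, whatever $|I|$ is.

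The regime where $a_n\le 2$ eventually is the real content of the theorem. There, ``we would then show that this measure is bounded below along a subsequence unless\ldots'' is exactly what has to be proved, and the plan omits several ingredients:
\begin{itemize}
\item $\beta_k$ depends on $i_k$, not only on $\eta_k/\epsilon_k$. By \eqref{uv} the first $i_k$ points of $Q_k$ move by $\eta_k+\epsilon_k$ and the rest by $\eta_k$, and Lemma~\ref{endpts} decides which of them bound long gaps.
\item The paper needs Lemmas~\ref{smallboth}--\ref{smalldist} to handle near-coincidences ($i_k/q_k$ or $d(|I|,Q_k)/\epsilon_k$ tiny, where level-$k$ bounds degenerate) by moving a bounded number of levels.
\item It also needs the symmetry $|I|\mapsto -|I|$, which preserves the gaps of $T_k$ and keeps $i_k$ away from $q_{k-1}$.
\item Finally it needs a case analysis of the itinerary of $|I|$ over up to four consecutive levels (Theorems~\ref{onetwo} and~\ref{case4}).
\item Only $\limsup_k\beta_k>0$ is available. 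Your first target $\liminf_k\mu(G_k)>0$ is false, since $\beta_k=0$ whenever $|I|$ lies in short gaps at levels $k,k+1,k+2$ and $a_{k+1}=a_{k+2}=2$.
\item The itinerary ``long at every level'' for $a_n\to 1$ forces $|I|\in\mathbb{Z}\alpha$ and must be treated separately (Theorem~\ref{case5}).
\item The value $f(\alpha)$ comes from
\[
\sum_{j\ge 0}(-1)^j\epsilon_{K+j}=\frac{\epsilon_K+\epsilon_{K+1}}{2}=\pm\frac{(q_{K+1}-q_K)\alpha-(p_{K+1}-p_K)}{2},
\]
so it is governed by the parities of $p_K+p_{K+1}$ and $q_K+q_{K+1}$, not of $(q_k,q_{k-1})$.
\end{itemize}

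For the converse, Borel--Cantelli over ``admissible'' periods is not justified. Every $m$ is a candidate period, and best approximation does not reduce you to $m\in\{q_k,\,q_k+q_{k-1},\,2q_k+q_{k-1}\}$. The paper instead proves the exact covering $T_k+(-\epsilon_k,\epsilon_k)=\mathbb{T}$ for all large $k$. This follows from $\eta_k+\epsilon_k\in(L_k-2\epsilon_k,2\epsilon_k)$ and $\eta_k\in(S_k-2\epsilon_k,2\epsilon_k)$, which, since $i_k\ge q_{k-1}$, cut every gap of $Q_k$ into pieces shorter than $2\epsilon_k$. It then uses monotonicity. For $q_k\le m<q_{k+1}$ one has $\|m\alpha\|\ge\epsilon_k$, and the window centres at scale $m$ contain $T_k$, so scale $m$ fails at every point with no error term. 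Periods with $\|m\alpha\|\ge\min(|I|,1-|I|)$ fail because the letter coding the shorter interval cannot recur at distance $m$ (Theorem~\ref{badcase}).
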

Combining this result with the 3-block Gordon Lemma, this yields the following simple corollary.
\begin{corollary}\label{Improving_eigenvalue_corollary}
    Let $\alpha$ be irrational and $I$ be a subinterval of the circle. If it is not the case that $\lim_{n \to \infty} a_n = 2$ and $2|I| \in \mathbb{Z}\alpha + \mathbb{Z}$, then for almost every $x \in X^{(I,\alpha)}$, the Schr\"{o}dinger operator $H_x$, has no eigenvalues.
\end{corollary}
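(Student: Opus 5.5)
The plan is to obtain the corollary by combining Theorem~\ref{mainthm} with the $3$-block Gordon lemma (\cite{3block}). First I reduce the hypothesis of the corollary to the negation of the exceptional condition in Theorem~\ref{mainthm}. That condition requires $\lim a_n = 2$ and $|I| \in \mathbb{Z}\alpha + f(\alpha)$ with $f(\alpha) \in \{1/2, \alpha/2, (\alpha+1)/2\}$. Since $|I|$ is a length in $\mathbb{T}$, the containment is really taken modulo $1$. So if $|I| \in \mathbb{Z}\alpha + \mathbb{Z} + f(\alpha)$, multiplying by $2$ gives $2|I| \in 2\mathbb{Z}\alpha + 2\mathbb{Z} + \{1, \alpha, \alpha+1\} \subset \mathbb{Z}\alpha + \mathbb{Z}$. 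Therefore, if it is not the case that both $\lim a_n = 2$ and $2|I| \in \mathbb{Z}\alpha+\mathbb{Z}$, the exceptional case of Theorem~\ref{mainthm} fails. The theorem then says $X^{(I,\alpha)}$ has almost-sure $3$-block Gordon structure up to a shift.

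Next I unpack ``almost-sure up to a shift.'' Fix the natural measure $\mu$ on $X^{(I,\alpha)}$, the pushforward of Lebesgue measure under the coding map; it is shift-invariant and ergodic since irrational rotation is uniquely ergodic. The conclusion gives, for $\mu$-a.e.\ $x$, some $n$ such that $\sigma^n x$ has $3$-block Gordon structure. The Gordon lemma then says $H_{\sigma^n x}$ has no eigenvalues. But $H_{\sigma^n x}$ is unitarily equivalent to $H_x$ via the translation $\delta_j \mapsto \delta_{j-n}$ on $\ell^2(\mathbb{Z})$. Unitary equivalence preserves point spectrum, so $H_x$ has no eigenvalues for $\mu$-a.e.\ $x$.

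The only step needing care is making sure ``almost every'' in the corollary refers to the same measure as in Theorem~\ref{mainthm}, and that shifting is harmless. The shift issue is handled by the unitary equivalence above. If the theorem is instead phrased as giving a positive-measure set, I would upgrade it to full measure using ergodicity. The set of $x$ having some shift with Gordon structure is shift-invariant and contains that positive-measure set. By ergodicity it therefore has measure $1$.
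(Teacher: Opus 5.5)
Your proposal is correct and follows the paper's argument: the corollary is just Theorem~\ref{mainthm} combined with the $3$-block Gordon lemma (Lemma~\ref{gordon}). The two details you add are ones the paper leaves implicit. Since $2f(\alpha)\in\{1,\alpha,\alpha+1\}$, the corollary's exceptional set contains the theorem's, and the operator of any shift of $x$ is unitarily equivalent to $H_x$, so the ``up to a shift'' qualifier does no harm.
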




Points in a subshift $X^{(I,\alpha)}$ are nearly in bijective correspondence with the unit circle. It is therefore possible to view the set of exceptional points (where $3$-block Gordon does not hold up to a shift) as a subset of the circle, which allows for a geometric analysis. We also show that for a set of $\alpha$ of full Lebesgue measure, 
the exceptional set of points in $[0,1)$ whose orbits do not have $3$-block Gordon structure actually has Hausdorff dimension less than $1$. (See \cite{DEGT} and \cite{JK} for other situations where zero measure is improved to Hausdorff dimension bounds for questions in the theory of ergodic Schr\"{o}dinger operators.)

\begin{theorem}\label{HD}
If $a_k \geq 4$ for a set of $k$ of positive upper density and $\limsup_k \left(\prod_{i=1}^k a_i\right)^{1/k} < \infty$, then $3$-block Gordon structure (up to a shift) holds for all $x \in X^{(I,\alpha)}$ except for codings of a set 
of Hausdorff dimension strictly less than $1$.
\end{theorem}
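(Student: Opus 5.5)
We identify $X^{(I,\alpha)}$ with the circle via $x \mapsto$ coding of the orbit $(x+n\alpha)_{n\in\mathbb{Z}}$. The plan is to show that the exceptional set $E \subset \mathbb{T}$ is contained in $\limsup$-type sets whose covering numbers we can count. Recall that $x$ has $3$-block Gordon structure with period $q_k$ (the continued fraction denominators) whenever none of the points $x+i\alpha$, $i \in (-2q_k, q_k]$, lies within distance $\|q_k\alpha\|$ of an endpoint of $I$ on the wrong side. The reason is that the codings at $i$, $i\pm q_k$ then agree, since rotation by $q_k$ moves points by $\|q_k\alpha\| < 1/q_{k+1}$.

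So let $B_k$ be the set of $x$ for which the $q_k$-Gordon condition fails. It is a union of at most $O(q_k)$ intervals of length $O(\|q_k\alpha\|) = O(1/(a_{k+1}q_k))$, one near each preimage of the two endpoints under the $3q_k$ relevant rotations. The exceptional set satisfies $E \subseteq \bigcup_N \bigcap_{k\ge N} B_k$ (up to the shift, which only enlarges the good set). We would use the shift freedom exactly as in the measure-theoretic argument of \cite{kaminaga}: a shift of $x$ by some $j$ is allowed, so $x$ is bad only if for all large $k$ every translate $x+j\alpha$ with $|j|$ small lies in $B_k$.

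The first key step is a refined geometric description of $B_k$ when $a_{k+1}\ge 4$. Using the Three-gap Theorem for $\{0,\alpha,\dots,3q_k\alpha\}$, we would show that inside each interval $J$ of the partition generated by $\{x : x+i\alpha \in \partial I,\ |i|\le q_k\}$ (of length about $1/q_k$), the bad set $B_k$ (modified by allowing shifts) occupies at most a fixed fraction $\theta<1$. More precisely, $B_k\cap J$ is covered by a bounded number $C$ of intervals of length at most $\theta|J|$-ish; with $a_{k+1}\ge4$ the bad portion is about $3/a_{k+1}\le 3/4$ of $J$. This gives a nested Cantor-like covering. At a level $k$ with $a_{k+1}\ge4$, each surviving interval of the previous level is replaced by its intersection with the next partition; the next partition has about $a_{k+1}$ subintervals per piece (the partition refines from $q_k$ to $q_{k+1}$ pieces), of which at most $(1-c)a_{k+1}$ survive, where $c>0$ is absolute. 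At other levels we use the trivial bound that all $\approx a_{k+1}$ subintervals survive.

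The second step is the dimension count. After levels $N,\dots,K$ we have at most $\prod_{k} a_{k+1}\prod_{k\in G}(1-c)$ intervals of length $\asymp 1/q_{K+1}\asymp 1/\prod a_{k+1}$, where $G$ is the set of good levels with $a_{k+1}\ge4$. The $s$-dimensional Hausdorff sum is then about
\[
\Bigl(\prod_{k\le K} a_{k+1}\Bigr)^{1-s}(1-c)^{|G\cap[N,K]|}.
\]
Positive upper density $\delta$ of $G$ gives, along a subsequence of $K$, a factor $(1-c)^{\delta K/2}$. The geometric-mean hypothesis gives $\prod a_{k+1}\le M^K$. Hence the sum tends to $0$ along that subsequence whenever $(1-s)\log M < (\delta/2)|\log(1-c)|$, so $\dim E\le 1-\delta|\log(1-c)|/(2\log M)<1$. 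Countable unions over $N$ preserve the bound. We also need $q_K$ to be comparable to $\prod a_k$ up to $2^K$, which is absorbed into $M$.

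The main obstacle is the first step: proving the uniform proportion $c$ for every interval $I$, including lengths where endpoints of $I$ interact, as in the nearly exceptional silver-mean-like configurations. We also have to make the shift freedom work locally, inside each partition interval, rather than only in measure. We would first try to show directly that at a level with $a_{k+1}\ge4$, among the $a_{k+1}$ sub-pieces of $J$ cut by the points $x+jq_k\alpha$, only those within two steps of an endpoint preimage are bad. That gives at most about $6$ bad pieces out of $a_{k+1}$, which is a proportion bounded below $1$ only if $a_{k+1}$ is large. If this count is too weak for $a_{k+1}=4$, we would instead use the exact $3$-block positions to get at most $a_{k+1}-1$ bad pieces, which still yields $c\ge 1/a_{k+1}$; this is fine if we restrict to a positive density of levels with bounded $a_{k+1}$, available since the geometric mean is bounded.
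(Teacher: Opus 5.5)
\textbf{There is a genuine gap, in your first step.} Your overall architecture is essentially the paper's: cover $\bigcup_N\bigcap_{k\ge N}B_k$ by nested families of intervals, gain a factor bounded below $1$ at each level with $a_{k+1}\ge 4$, and turn this into a dimension deficit via $q_K\le M^K$. Your second step is fine. The shift freedom is not needed at all, since the exceptional set already lies in $\bigcup_N\bigcap_{k\ge N}B_k$ with $B_k=T_k+(-\epsilon_k,\epsilon_k)$.

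The mechanisms you propose for the first step fail when the good digits equal $4$, which the hypothesis allows.
\begin{itemize}
\item The bad proportion is not about $3/a_{k+1}$. $B_k$ consists of $2q_k$ windows of length $2\epsilon_k$, of total measure about $4q_k\epsilon_k\approx 4\epsilon_k/\epsilon_{k-1}$. For $a_{k+1}=4$ this is about $4/(4+\epsilon_{k+1}/\epsilon_k)$, so the margin below $1$ is thin and degenerates as $a_{k+2}\to\infty$.
\item This thin margin is not spread over the pieces. Take $a_{k+1}=4$, $q_k\alpha>0$ and $\eta_k\in(\epsilon_k,2\epsilon_k)$. Then every gap of $Q_k$ contains exactly one point of $Q_k+a$, at distance $\eta_k$ or $\eta_k+\epsilon_k$ (i.e.\ in $(\epsilon_k,3\epsilon_k)$) from its left end.
\item A short gap has length $4\epsilon_k+\epsilon_{k+1}$ and splits at level $k+1$ into pieces of lengths $\epsilon_k,\epsilon_k,\epsilon_k,\epsilon_k+\epsilon_{k+1}$. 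The two endpoint windows meet the first and last of these, and the interior window meets the middle two. So all four sub-pieces meet $B_k$, even though a part of the gap of length at least $\epsilon_{k+1}$ is left uncovered.
\end{itemize}
Consequently there is no uniform fraction $\theta<1$ per piece, and your fallback bound of $a_{k+1}-1$ bad sub-pieces is false. Your count of ``about $6$'' bad sub-pieces gives a gain only when $a_{k+1}\ge 7$. As written, the argument proves the theorem only under a hypothesis of that kind.

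The missing idea is that the bound $4\epsilon_s/\epsilon_{s-1}<1$ holds only on average over scales much larger than $\epsilon_{s-1}$. It must therefore be applied to surviving intervals that are long compared with the next good level's gap size, not level by level.
\begin{itemize}
\item The paper keeps only good levels $s_1<s_2<\cdots$ with $s_{i+1}-s_i\ge 17$ and $\epsilon_{s_i-1}/\epsilon_{s_i}>4.2$, and ignores all other levels. The second condition means $a_{s_i+1}\ge 5$, or $a_{s_i+1}=4$ and $a_{s_i+2}\le 4$; if $a_{k+1}=4$ and $a_{k+2}\ge 5$, one uses $k+1$ instead.
\item Since $Q_s$ and $Q_s+a$ are each $\epsilon_{s-1}$-separated, an interval of length $2\epsilon_{s_i}$ meets at most $\lceil 2\epsilon_{s_i}/\epsilon_{s_{i+1}-1}\rceil\le 2.08\,\epsilon_{s_i}/\epsilon_{s_{i+1}-1}$ windows around each translate. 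The spacing guarantees $\epsilon_{s_i}/\epsilon_{s_{i+1}-1}>25$, so the rounding loss is smaller than the margin.
\item Hence the total length of the cover is multiplied by at most $4.16\,\epsilon_{s_{i+1}}/\epsilon_{s_{i+1}-1}<4.16/4.2$ at each retained level, and your dimension count then goes through.
\end{itemize}
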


At least two very natural questions are raised by Theorem~\ref{mainthm} and \cref{Improving_eigenvalue_corollary}. Firstly, in the case where $\lim a_n = 2$ (meaning that $\alpha$ is M\"{o}bius equivalent to the silver mean $\sqrt{2} - 1$) and $|I| = m\alpha + f(\alpha)$, we do not know of an argument to prove or disprove the existence of eigenvalues; we suspect this case to be extremely interesting and give some thoughts in Section~\ref{silver}. However, the case remains open and we pose the following question.

\begin{question}
Do the two-interval circle rotation codings which fail to have almost-sure $3$-block Gordon structure (up to a shift) still have associated Schr\"{o}dinger operator with almost-sure absence of eigenvalues?
\end{question}

In (\cite{sturm}), the authors show that all Sturmian subshifts have uniform absence of eigenvalues. Our results show almost sure absence of eigenvalues in the general case of 2-Interval codings of circle rotations. Uniform absence in the general case is still open.

\begin{question}
Do two-interval circle rotation codings actually have associated Schr\"{o}dinger operators with uniform absence of eigenvalues?
\end{question}

\section{Definitions and preliminaries}\label{sec:defs}
\begin{definition}
For a starting value $x$, an interval $I$, and an irrational rotation number $\alpha$, define the associated \textbf{coding sequence} $c^{(x,I,\alpha)}$ by
\[
c^{(x,I,\alpha)}(n) = \chi_I(x + n\alpha)
\]
and the associated \textbf{coding subshift} $X^{(I,\alpha)} = \overline{\{c^{(x,I,\alpha)}\}_{x \in \mathbb{T}}}$.
\end{definition}
Informally, the coding sequence tracks whether or not the $n$th rotation of $x$ by $\alpha$ lies in $I$, and since the set $\{n\alpha\}$ is dense, the coding subshift contains all coding sequences regardless of starting value. As an extension of a group rotation which is $1$-$1$ on a set of full Haar measure, every coding subshift is \textbf{uniquely ergodic}, meaning that it has a unique shift-invariant measure.

Our tool for proving absence of eigenvalues for Schr\"{o}dinger operators of coding sequences is the classical $3$-block Gordon lemma, which precludes eigenvalues when the sequence exhibits certain periodic behavior at infinitely many scales.
\begin{definition}
For a finite alphabet $A$ and $x \in A^{\mathbb{Z}}$, $x$ has \textbf{3-block Gordon structure} if there exists a sequence $(q_k)$ such that for every $k$ and every $i \in (-q_k, 0]$, $x(i) = x(i - q_k) = x(i + q_k)$.
\end{definition}

The following lemma is classical (for instance, see Section 7.8 of \cite{schrobook}).

\begin{lemma}\label{gordon}
If $x \in A^\mathbb{Z}$ has $3$-block Gordon structure, then the associated Schr\"{o}dinger operator $H_x$ has no eigenvalues.
\end{lemma}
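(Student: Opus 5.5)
This is the classical Gordon argument, which rests on the transfer-matrix formalism and Cayley--Hamilton. I would not reprove it in full but follow the standard route in Section 7.8 of \cite{schrobook}.

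Fix an energy $E$ and suppose $u$ solves $H_x u = Eu$. Write
\[
U(n) = \begin{pmatrix} u(n+1) \\ u(n) \end{pmatrix}, \qquad U(n) = M_E(n) U(0),
\]
where $M_E(n)$ is the product of the one-step transfer matrices $T(j) = \begin{pmatrix} E - x(j) & -1 \\ 1 & 0 \end{pmatrix}$. These matrices lie in $SL(2,\mathbb{R})$. Since the alphabet $A$ is finite, the entries of all $T(j)$ are uniformly bounded. The goal is to show that $u$ cannot be in $\ell^2(\mathbb{Z})$ unless $u \equiv 0$. The key claim is that
\[
\max\bigl(\|U(q_k)\|, \|U(2q_k)\|, \|U(-q_k)\|\bigr) \geq \tfrac12 \|U(0)\|
\]
for every $k$ along the Gordon sequence. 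Since $q_k \to \infty$ (a bounded subsequence of $(q_k)$ would force $x$ to be periodic, and that case can be handled separately or excluded by passing to a subsequence), this shows $U(n) \not\to 0$, so $u \notin \ell^2$.

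To prove the claim, let $M = M_E(q_k)$ be the transfer matrix from $0$ to $q_k$, a product over the block of indices $[0,q_k)$. By the Gordon hypothesis, the block of $x$ on $(-q_k,0]$ repeats on $(-2q_k,-q_k]$ and on $(0,q_k]$, up to the index shift by one in the transfer matrix convention. After adjusting the base point by one if needed, the transfer matrix across each of three consecutive blocks of length $q_k$ is the same matrix $M$. Cayley--Hamilton for $M \in SL(2,\mathbb{R})$ gives $M^2 - (\operatorname{tr} M) M + I = 0$. Applying this to $U(0)$ (or to $U(-q_k)$, depending on which blocks are available) yields:
\begin{itemize}
\item if $|\operatorname{tr} M| \le 1$, then $U(0) = (\operatorname{tr} M)U(q_k) - U(2q_k)$, hence $\max(\|U(q_k)\|,\|U(2q_k)\|) \ge \frac12\|U(0)\|$;
\item if $|\operatorname{tr} M| > 1$, use $M^{-1}$ as well and write $U(q_k) + U(-q_k) = (\operatorname{tr}M)\,U(0)$, so $\max(\|U(q_k)\|,\|U(-q_k)\|) \ge \frac12 \|U(0)\|$.
\end{itemize}
With three blocks (the middle one at $(-q_k,0]$ together with both neighbours), one arranges the indices so that both identities use only blocks where $x$ agrees.

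The main obstacle is purely bookkeeping: aligning the hypothesis $x(i) = x(i \pm q_k)$ for $i \in (-q_k,0]$ with the indexing of the transfer matrices, so that the three matrices across consecutive blocks really coincide. I would handle this by choosing the base point $-q_k$ or $0$ appropriately and writing $M$ as the product of $T(j)$ over one full block. This only shifts the norms in the claim by a bounded index, which does not affect the conclusion $U(n) \not\to 0$ as $|n| \to \infty$.
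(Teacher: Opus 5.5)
The paper gives no proof of this lemma; it cites Section 7.8 of \cite{schrobook}. Your route (transfer matrices plus Cayley--Hamilton) is the classical argument behind that citation. However, the step you call bookkeeping fails as written, and the repair you propose does not work.

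With your convention $U(n)=T(n)U(n-1)$, the matrix $M_E(q_k)=T(q_k)\cdots T(1)$ is a product over $(0,q_k]$, not $[0,q_k)$. So the hypothesis gives equal transfer matrices $M$ for exactly three maps: $U(-2q_k)\mapsto U(-q_k)$, $U(-q_k)\mapsto U(0)$ and $U(0)\mapsto U(q_k)$. There is no off-by-one problem. The real problem is that nothing is known about $x$ on $(q_k,2q_k]$. Your first bullet, $U(0)=(\operatorname{tr}M)U(q_k)-U(2q_k)$, therefore does not follow from the hypothesis, and neither does your key claim involving $\|U(2q_k)\|$.

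The misalignment is by $q_k$, not by a bounded index. Moving the base point to $-q_k$ does not rescue it. That only gives $\max(\|U(-2q_k)\|,\|U(0)\|,\|U(q_k)\|)\ge\frac12\|U(-q_k)\|$. If $u\in\ell^2$, the right side tends to $0$, so the inequality holds trivially and yields no contradiction. The base point must stay fixed, independent of $k$.

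The fix is to use the two left-hand blocks when $|\operatorname{tr}M|\le 1$. Apply $M^2-(\operatorname{tr}M)M+I=0$ to $U(-2q_k)$ to get $U(0)=(\operatorname{tr}M)\,U(-q_k)-U(-2q_k)$. Your case $|\operatorname{tr}M|>1$, using $U(q_k)+U(-q_k)=(\operatorname{tr}M)U(0)$, is fine as written. Together these give $\max(\|U(q_k)\|,\|U(-q_k)\|,\|U(-2q_k)\|)\ge\frac12\|U(0)\|$ with base point $0$, and $u\equiv 0$ follows if $u\in\ell^2$.

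Equivalently, you can reflect $n\mapsto -n$, which conjugates $H_x$ unitarily to $H_{\tilde x}$ with $\tilde x(n)=x(-n)$. Then $\tilde x(j)=\tilde x(j\pm q_k)$ for $0\le j<q_k$, which is the textbook form up to a shift by one. Your ``shift by one'' intuition is correct only after this reflection.

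Separately, your parenthetical about bounded $(q_k)$ is wrong. A bounded sequence only constrains $x$ on a finite window and does not force periodicity. For example, $q_k\equiv 1$ only asks $x(-1)=x(0)=x(1)$, which is compatible with a single-site defect producing an eigenvalue. The definition has to be read with $q_k\to\infty$, as is standard.
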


It is well-known that the behavior of coding sequences is largely controlled by the continued fraction expansion $\alpha = [a_1, a_2, a_3, \ldots]$ of $\alpha$; the larger the digits $a_n$, the more repetition the coding sequences exhibit. Although it is not possible for infinite subshifts to have uniform $3$-block Gordon structure (see \cite{no3block}), it has been shown that large enough digits imply almost-sure $3$-block Gordon structure for the associated coding subshift.

\begin{theorem}[\cite{kaminaga}]\label{kaminaga}
If $\limsup a_n \geq 4$, then almost all $x \in X^{(I, \alpha)}$ have a shift with $3$-block Gordon structure, and therefore $X^{(I, \alpha)}$ has almost-sure absence of eigenvalues.
\end{theorem}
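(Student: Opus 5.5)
The statement immediately above is Theorem~\ref{kaminaga}, which is quoted from \cite{kaminaga}. The plan below reconstructs its proof with the three-gap/continued-fraction toolkit the paper relies on. Write $p_n/q_n$ for the convergents of $\alpha$ and $\|q_n\alpha\|$ for the distance to the nearest integer, so that $\|q_{n-1}\alpha\| \approx a_{n+1}\|q_n\alpha\|$ (up to bounded factors). Fix one of the infinitely many $n$ with $a_{n+1}\ge 4$. Our candidate Gordon period is $q_n$.

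\emph{Reduction to a geometric condition.} For the coding $c=c^{(x,I,\alpha)}$, the identity $c(i)=c(i\pm q_n)$ holds precisely when the three points $x+i\alpha$ and $x+i\alpha\pm q_n\alpha$ all lie on the same side of both endpoints of $I$. Since $q_n\alpha$ is a rotation by $\delta=\pm\|q_n\alpha\|$, this fails only if some orbit point $x+i\alpha$, with $i\in(-2q_n,q_n]$, lies within distance $\|q_n\alpha\|$ of an endpoint $e$ of $I$. Equivalently, $x$ lies in one of the small arcs $e-i\alpha+[-\delta,\delta]$ for $i$ ranging over an interval of length $3q_n$. Call the union of these arcs $B_n$ (two endpoints, $6q_n$ arcs). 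By the three-gap theorem, the points $\{i\alpha\}$ for $i$ in a window of length $q_n$ are spaced at least $\|q_{n-1}\alpha\|$ apart. So within each window of length $q_n$ the arcs are disjoint, and
\[
\mathrm{Leb}(B_n)\le 6q_n\cdot 2\|q_n\alpha\|\lesssim \frac{12}{a_{n+1}}.
\]
With only $a_{n+1}\ge 4$ this bound need not be less than $1$. That is the main obstacle: a naive union bound is too crude.

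\emph{Using shifts to beat the constant.} Instead of requiring the window $(-q_n,0]$ at $0$, we allow a shift $m$. The goal is a set $G_n$ of measure bounded below by a constant $c>0$ independent of $n$, such that every $x\in G_n$ has a position $m$ with the three-block condition at scale $q_n$ centered at $m$. The cleaner route is to keep $m=0$ and estimate more carefully. The forbidden arcs around a single endpoint form $3$ blocks of $q_n$ points each. Each block consists of points spaced $\approx\|q_{n-1}\alpha\|\approx a_{n+1}\|q_n\alpha\|$ apart, and the three blocks are translates of one another by $\pm q_n\alpha$, i.e.\ by $\delta$. Hence the forbidden arcs for one endpoint merge into $q_n$ clusters, each of length about $4\delta$ rather than $6\delta$. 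The bad measure per endpoint is then about $4q_n\|q_n\alpha\|\approx 4/(a_{n+1}+\cdots)$, and over two endpoints about $8/a_{n+1}$. I expect the correct bookkeeping, using $q_n\|q_n\alpha\|<1/(a_{n+1}+1)$ and $q_n\|q_{n-1}\alpha\|+q_{n-1}\|q_n\alpha\|=1$, to give a bad measure strictly below $1$ when $a_{n+1}\ge 4$. If the two endpoints' clusters happen to overlap, the bound only improves. This yields $\mathrm{Leb}(G_n)\ge c>0$ uniformly in $n$.

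\emph{Conclusion.} Let $G=\limsup_n G_n$, taken over the $n$ with $a_{n+1}\ge4$. Then $\mathrm{Leb}(G)\ge c>0$, and every $x\in G$ has 3-block Gordon structure along the corresponding $q_n$. The set of $x$ admitting some shift $x+m\alpha\in G$ is rotation invariant and has positive measure, so by ergodicity of the irrational rotation it has full measure. Transferring through the a.e.\ one-to-one coding map, unique ergodicity gives that almost every point of $X^{(I,\alpha)}$ has a shift with 3-block Gordon structure. Absence of eigenvalues then follows from Lemma~\ref{gordon}, since $H$ of a shifted sequence is unitarily equivalent to $H$ of the original.
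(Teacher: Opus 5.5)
There is a genuine gap. It sits in your reduction step, not in the final bookkeeping. You declare $x$ bad whenever any of the $3q_n$ points $x+i\alpha$, $i\in(-2q_n,q_n]$, lies within $\delta=\|q_n\alpha\|$ of an endpoint. Your bad set is then exactly $B_n=T_n+[-2\delta,2\delta]$, where $T_n=Q_n\cup(Q_n+a)$, $Q_n=\{j\alpha:0\le j<q_n\}$ and $I=[0,a)$; your clusters of length $4\delta$ are precisely these arcs. So the best available estimate is $\mathrm{Leb}(B_n)\le 8q_n\delta$.

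For $a_{n+1}=4$ this is not below $1$:
\begin{itemize}
\item The identity $q_n\|q_{n-1}\alpha\|+q_{n-1}\|q_n\alpha\|=1$ only gives $q_n\delta<1/4$, hence $8q_n\delta<2$.
\item The inequality $q_n\delta<1/(a_{n+1}+1)$ you invoke is false in general; the true bounds are $1/(a_{n+1}+2)<q_n\delta<1/a_{n+1}$. Even if it held, it would only give $8/5$.
\end{itemize}
No sharper counting can repair this. Suppose $a_{n+1}=4$ and $a=(i+2q_n)\alpha$ for some $0\le i<q_n$, or $a$ is close enough to such a point. Then each gap of $Q_n$ receives exactly one point of $Q_n+a$, at distance $2\delta$ or $3\delta$ from one end. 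Since $\|q_{n-1}\alpha\|<5\delta$, every gap of $T_n$ is shorter than $4\delta$, so $B_n$ is the whole circle. Your sufficient condition for goodness then never holds at scale $q_n$.

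The missing observation is that only the middle block matters. For $i\in(-q_n,0]$, the points $x+(i\pm q_n)\alpha$ are the two ends of the arc of radius $\delta$ centred at $x+i\alpha$. So $c(i)=c(i\pm q_n)$ can fail only if an endpoint of $I$ lies within $\delta$ of $x+i\alpha$; the outer blocks do not need to avoid the endpoints at all. This is the paper's Lemma~\ref{Tk}: the bad set at scale $q_n$ is $T_n+(-\delta,\delta)$, which has measure at most $4q_n\delta$.

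You then still need this bound to stay uniformly away from $1$, and $a_{n+1}\ge 4$ alone does not give that. With $a_{n+1}=4$ and $a_n,a_{n+2}$ huge, $4q_n\delta$ can be arbitrarily close to $1$. The paper restricts to $n$ with either $a_{n+1}\ge 5$, or $a_{n+1}=4$ and $a_{n+2}\le 4$; one of these occurs infinitely often. For such $n$, Lemma~\ref{ratios} gives $\|q_{n-1}\alpha\|/\delta>4.2$. Combined with $q_n\|q_{n-1}\alpha\|<1$, this yields a good set of measure $\beta_n>1-4/4.2=1/21$. With that in place, your $\limsup$ and ergodicity argument finishes the proof exactly as in Proposition~\ref{longint}.
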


The proof from \cite{kaminaga} basically relies on the observation that the distance from $q_k \alpha$ to the nearest integer is much smaller than all gaps between nearest elements of $\{0, \alpha, \ldots, (q_k-1)\alpha\}$ when $a_{k+1}$ is large, which leads to a high probability that $x(i) = x(i \pm q_k)$ for a randomly chosen coding sequence
$x \in X^{(I, \alpha)}$.

By using a more nuanced analysis of the structure of these gaps, we are able to prove $3$-block Gordon structure in a much greater generality. For simplicity, we reduce information about $I$ to its length $a$ from now on, 
as all intervals of the same length yield the same $X^{(I, \alpha)}$ up to sets of measure $0$.

We recall some elementary definitions and results from the theory of continued fractions; for a general introduction, see \cite{hardywright}.

\begin{definition}
For any irrational $\alpha$, one can write a continued fraction expansion of $\alpha$ as
\[
\alpha = a_0 + \cfrac{1}{a_1 + \cfrac{1}{a_2 + \cfrac{1}{a_3 + \ddots}}}
\]
This means specifically that the sequence of truncations 
\[
a_0 + \cfrac{1}{a_1 + \cfrac{1}{a_2 + \cfrac{1}{\ddots + \cfrac{1}{a_k}}}}
\]
approaches $\alpha$. This is called the sequence of \textbf{continued fraction convergents} to $\alpha$, and is written in lowest terms as $(p_k/q_k)$.
\end{definition}

\begin{definition}
For any $\alpha$ and $a$ and any $k \in \mathbb{N}$, define 
\[
Q_k = \{i\alpha \ : \ 0 \leq i < q_k\}, 
\]
where $(p_k/q_k)$ are the continued fraction convergents to $\alpha$, and define $\epsilon_k = d(0, q_k \alpha)$
(according to the standard metric on the $1$-torus $\mathbb{T}$).
We say that $q_k \alpha$ is \textbf{positive} if it lies in $(0, 1/2)$ (with the usual identification of $\mathbb{T}$ with $[0,1)$) and \textbf{negative} otherwise. 
\end{definition}

The signs of $q_k \alpha$ alternate as $k$ increases. The following recursion for continued fraction convergents is standard:
\[
q_{k+1} = a_{k+1} q_k + q_{k-1}.
\]

Multiplying by $\alpha$ and recalling that $q_{k} \alpha$ has opposite sign from $q_{k-1} \alpha$ and $q_{k+1} \alpha$ yields
\[
q_{k+1} \alpha = a_{k+1} q_k \alpha + q_{k-1} \alpha \Longrightarrow \epsilon_{k-1} = a_{k+1} \epsilon_k + \epsilon_{k+1}.
\]

The following lemma is now an exercise.

\begin{lemma}\label{ratios}
For any $k$, $\frac{q_{k}}{q_{k-1}} \in (a_k + (a_{k-1}+1)^{-1}, a_k + 1)$ and $\frac{\epsilon_{k-2}}{\epsilon_{k-1}} \in 
(a_{k} + (a_{k+1} + 1)^{-1}, a_{k} + 1)$.
\end{lemma}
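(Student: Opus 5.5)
The plan is to reduce both claims to one elementary observation. If $x = a y + z$ with $a \geq 1$ an integer and $0 < z < y$, then $x/y \in (a, a+1)$, and hence $y/x \in \bigl((a+1)^{-1}, a^{-1}\bigr) \subseteq \bigl((a+1)^{-1}, 1\bigr)$. I would apply this once to the $q$-recursion and once to the $\epsilon$-recursion stated just before the lemma.

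For the denominators, the recursion $q_k = a_k q_{k-1} + q_{k-2}$ gives
\[
\frac{q_k}{q_{k-1}} = a_k + \frac{q_{k-2}}{q_{k-1}}.
\]
It therefore suffices to show $q_{k-2}/q_{k-1} \in \bigl((a_{k-1}+1)^{-1}, 1\bigr)$. Apply the observation to $q_{k-1} = a_{k-1} q_{k-2} + q_{k-3}$, using that the $q_j$ are strictly increasing and positive, so $0 < q_{k-3} < q_{k-2}$. This gives $q_{k-1}/q_{k-2} \in (a_{k-1}, a_{k-1}+1)$. Taking reciprocals yields the desired interval, since $a_{k-1} \geq 1$ makes $1/a_{k-1} \leq 1$.

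For the distances, the relation $\epsilon_{k-2} = a_k \epsilon_{k-1} + \epsilon_k$ (derived above from the alternating signs of $q_j\alpha$) gives
\[
\frac{\epsilon_{k-2}}{\epsilon_{k-1}} = a_k + \frac{\epsilon_k}{\epsilon_{k-1}}.
\]
Shifting the index, $\epsilon_{k-1} = a_{k+1}\epsilon_k + \epsilon_{k+1}$. Irrationality of $\alpha$ forces $\epsilon_{k+1} > 0$, and the same relation with $a_{k+2} \geq 1$ gives $\epsilon_{k+1} < \epsilon_k$. The observation then gives $\epsilon_{k-1}/\epsilon_k \in (a_{k+1}, a_{k+1}+1)$. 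Inverting yields $\epsilon_k/\epsilon_{k-1} \in \bigl((a_{k+1}+1)^{-1}, 1\bigr)$, which is exactly the claim.

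The only real obstacle is bookkeeping at the smallest indices, where the strict inequalities need checking against the conventions for $q_{-1}, q_0, q_1$. For instance, $q_{-1} = 0$ makes $q_{k-3} > 0$ fail when $k = 2$, and $q_1 = q_0$ when $a_1 = 1$. I would handle this by stating the lemma for $k$ large enough, which is all the asymptotic arguments later in the paper require. Alternatively, one can verify the few initial cases directly from $q_0 = 1$, $q_1 = a_1$, $q_2 = a_1 a_2 + 1$, and $\epsilon_{-1} = 1$, $\epsilon_0 = \alpha$ (for $0 < \alpha < 1/2$).
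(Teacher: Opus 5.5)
Your proof is correct and is exactly the intended exercise. The paper states the lemma without proof right after the recursions $q_{k+1} = a_{k+1}q_k + q_{k-1}$ and $\epsilon_{k-1} = a_{k+1}\epsilon_k + \epsilon_{k+1}$, and applying each recursion once more to bound $q_{k-2}/q_{k-1}$ and $\epsilon_k/\epsilon_{k-1}$ is the whole argument.

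Your small-index caveat is genuine. For $a_1 = 1$ one gets $q_2/q_1 = a_2 + 1$ and $q_3/q_2 = a_3 + (a_2+1)^{-1}$, both breaking the strict inequalities. So restricting to large $k$, which is all the later arguments use, is the right fix; a direct check of those cases would only confirm the failure.
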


One of the most celebrated results in Diophantine approximation is the Three-Gap Theorem, which states that for any irrational $\alpha$ and $n > 0$, when the numbers $0, \alpha, \ldots, n\alpha$ are placed on a unit circle, the $n$ intervals created (the `gaps') have at most three distinct lengths. We are interested specifically in the case where $n = q_k$ for some $k$, which actually implies only two different gaps; this implicitly appears in some original proofs of the Three-Gap Theorem, but we reference the more modern treatment \cite{3gap}.

\begin{lemma}[\cite{3gap}, Lemma 2.1]\label{longshort}
For any $\alpha$ and $k$, the set $Q_k = \{i\alpha \ : \ 0 \leq i < q_k\}$ has two different gaps, of lengths $S_k := \epsilon_{k-1}$ (short) and $L_k := \epsilon_{k-1} + \epsilon_k$ (long).
\end{lemma}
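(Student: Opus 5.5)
The plan is to bound each gap of $Q_k$ from above by an explicit ``candidate neighbor'' distance, and then to show that these upper bounds already add up to $1$, the total length of $\mathbb{T}$. That forces every bound to be an equality. Throughout I write $\|\cdot\|$ for distance to the nearest integer and orient $\mathbb{T}$ so that $q_{k-1}\alpha$ is displaced from $0$ in the positive direction, by $\epsilon_{k-1}$. Since $q_k\alpha$ has the opposite sign, it is displaced from $0$ in the negative direction, by $\epsilon_k$.

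\textbf{Step 1 (candidate right neighbors).} For each $0 \le i < q_k$ I exhibit an index $s(i)\in[0,q_k)$ with $s(i)\alpha$ lying to the right of $i\alpha$ at a controlled distance.
\begin{itemize}
\item If $i < q_k - q_{k-1}$, take $s(i) = i+q_{k-1}$. Then $s(i)\alpha = i\alpha + q_{k-1}\alpha$ lies at distance $\epsilon_{k-1}$ to the right.
\item If $i \ge q_k - q_{k-1}$, take $s(i) = i + q_{k-1} - q_k \in [0, q_{k-1})$. Then
\[
s(i)\alpha = i\alpha + q_{k-1}\alpha - q_k\alpha .
\]
The two displacements $q_{k-1}\alpha$ and $-q_k\alpha$ both point right, so this point lies at distance $\epsilon_{k-1}+\epsilon_k$ to the right. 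This uses $\epsilon_{k-1}+\epsilon_k<1$, which holds since $\epsilon_k < \epsilon_{k-1} \le 1/2$.
\end{itemize}
Now let $g_i$ be the length of the gap of $Q_k$ immediately to the right of $i\alpha$. Then $g_i$ is at most the corresponding candidate distance. Exactly $q_k - q_{k-1}$ indices receive the bound $\epsilon_{k-1}$, and exactly $q_{k-1}$ receive the bound $\epsilon_{k-1}+\epsilon_k$.

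\textbf{Step 2 (the bounds sum to $1$).} The $q_k$ gaps partition $\mathbb{T}$, so $\sum_i g_i = 1$. The sum of the bounds is
\[
(q_k-q_{k-1})\epsilon_{k-1} + q_{k-1}(\epsilon_{k-1}+\epsilon_k) = q_k\epsilon_{k-1} + q_{k-1}\epsilon_k .
\]
I will show this equals $1$ using the classical identity $q_k p_{k-1} - p_k q_{k-1} = \pm 1$. Rewriting it gives
\[
q_k(q_{k-1}\alpha - p_{k-1}) - q_{k-1}(q_k\alpha - p_k) = \mp 1 .
\]
The quantities $q_{k-1}\alpha - p_{k-1}$ and $q_k\alpha - p_k$ have opposite signs and absolute values $\epsilon_{k-1}$ and $\epsilon_k$. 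So the two terms on the left have the same sign, their absolute values add, and $q_k\epsilon_{k-1}+q_{k-1}\epsilon_k = 1$.

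\textbf{Step 3 (conclusion).} Since $g_i \le$ its bound for every $i$ and both sums equal $1$, each inequality is an equality. Hence every gap has length exactly $\epsilon_{k-1}$ or $\epsilon_{k-1}+\epsilon_k$. Both lengths occur provided $q_{k-1}\ge 1$ and $q_k > q_{k-1}$, which holds except in degenerate initial indices. In those cases the statement reduces to a trivial check or is vacuous.

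The only real subtlety I expect is bookkeeping. One must make sure the sign convention is right, namely that $q_{k-1}\alpha$ and $-q_k\alpha$ displace in the same direction, and that the index $s(i)$ genuinely lies in $[0,q_k)$. The identity $q_k\epsilon_{k-1}+q_{k-1}\epsilon_k=1$ is the key input; the sandwich argument does the rest without needing the best-approximation lower bound.
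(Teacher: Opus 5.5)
Your proof is correct, and it takes a genuinely different route from the paper, which gives no argument here: the two-gap statement is imported from Lemma 2.1 of \cite{3gap}, i.e.\ from the three-gap literature. You give a short self-contained sandwich argument instead. For each $i$ you exhibit a point of $Q_k$ to the right of $i\alpha$ at arc length $\epsilon_{k-1}$ (if $i<q_k-q_{k-1}$) or $\epsilon_{k-1}+\epsilon_k$ (if $i\ge q_k-q_{k-1}$). The determinant identity $q_kp_{k-1}-p_kq_{k-1}=\pm1$ then shows these upper bounds already sum to $1$, which forces equality in every gap.

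Your route yields more than the bare statement:
\begin{itemize}
\item It gives the multiplicities directly ($q_k-q_{k-1}$ short gaps and $q_{k-1}$ long ones). The paper needs this count in the proof of Theorem~\ref{case1} and otherwise has to extract it from Lemma~\ref{endpts}.
\item It identifies the long gaps: in your orientation their right endpoints are exactly $j\alpha$ with $0\le j<q_{k-1}$, which is Lemma~\ref{endpts}.
\item The exact identity $q_k\epsilon_{k-1}+q_{k-1}\epsilon_k=1$ gives Lemma~\ref{qeps} immediately, in sharper form.
\end{itemize}
The citation, by contrast, is shorter and situates the fact within the three-gap theorem.

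One point deserves a word of justification: you assert without comment that $|q_j\alpha-p_j|=\epsilon_j$, i.e.\ that $p_j$ is the integer nearest to $q_j\alpha$. This can fail only for the very first convergent, when $a_1=1$. That is exactly the degenerate situation $q_k=q_{k-1}$ you already set aside, so it does not affect the argument.
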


This immediately yields the following (also well-known) fact.

\begin{lemma}\label{qeps}
For any $\alpha$ and $k$, $q_k \epsilon_{k-1} < 1$ and $q_k \epsilon_k > (a_{k+1} + 2)^{-1}$. 
\end{lemma}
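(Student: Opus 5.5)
We prove the two inequalities separately, each from Lemma~\ref{longshort} together with the recursion $\epsilon_{k-1} = a_{k+1}\epsilon_k + \epsilon_{k+1}$.

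For the first inequality, note that the $q_k$ points of $Q_k$ cut the circle into exactly $q_k$ gaps. By Lemma~\ref{longshort}, every gap has length either $S_k = \epsilon_{k-1}$ or $L_k = \epsilon_{k-1}+\epsilon_k$, so every gap has length at least $\epsilon_{k-1}$. Summing the gap lengths over the whole circle gives $1 \geq q_k \epsilon_{k-1}$. The inequality is strict because $\epsilon_k > 0$ for irrational $\alpha$, so $L_k > S_k$. Hence at least one gap must be long, unless all gaps are short. We exclude the all-short case by checking that the number of long gaps is positive. I would count them directly: the standard count is that there are $q_{k-1}$ long gaps and $q_k - q_{k-1}$ short ones. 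This count follows because a gap $[i\alpha, j\alpha]$ is long exactly when the point $(i \pm q_{k-1})\alpha$, which lands in it, is absent from $Q_k$. If one prefers not to rely on that count, a simpler route works: Lemma~\ref{longshort} asserts that both gap lengths occur, so at least one gap is long, and $1 > q_k\epsilon_{k-1}$ follows.

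For the second inequality, apply the first at index $k+1$ to get $q_{k+1}\epsilon_k < 1$. We instead need a lower bound on $q_k\epsilon_k$, so we use the exact count. Summing gap lengths of $Q_{k+1}$ gives the identity
\[
q_k(\epsilon_k+\epsilon_{k+1}) + (q_{k+1}-q_k)\epsilon_k = 1, \quad\text{that is,}\quad q_{k+1}\epsilon_k + q_k\epsilon_{k+1} = 1.
\]
This is the classical identity, which can also be checked directly from $q_{k+1}p_k - p_{k+1}q_k = \pm 1$. Since $\epsilon_{k+1} < \epsilon_k$, it yields $1 < (q_{k+1}+q_k)\epsilon_k$. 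By Lemma~\ref{ratios}, $q_{k+1} < (a_{k+1}+1)q_k$, so $1 < (a_{k+1}+2)q_k\epsilon_k$, which is the claim.

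The main obstacle is establishing the identity $q_{k+1}\epsilon_k + q_k\epsilon_{k+1} = 1$. This requires either the exact long/short gap count or the determinant identity for convergents, and I would derive it from the latter to avoid depending on unstated parts of \cite{3gap}. Once that identity is in hand, both inequalities follow immediately.
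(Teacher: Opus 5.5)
Your proof is correct. The first inequality is argued exactly as in the paper.

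For the second inequality you take a different route. The paper does not need an exact identity. It runs the same gap-sum argument in the opposite direction at level $k$: every gap of $Q_k$ has length at most $L_k$ and at least one gap is short, so $1 < q_kL_k = q_k(\epsilon_{k-1}+\epsilon_k)$. It then applies the $\epsilon$-half of Lemma~\ref{ratios}, namely $\epsilon_{k-1} < (a_{k+1}+1)\epsilon_k$.

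You instead pass to level $k+1$ and use the exact identity $q_{k+1}\epsilon_k + q_k\epsilon_{k+1} = 1$. You then drop down via $\epsilon_{k+1} < \epsilon_k$ and apply the $q$-half of Lemma~\ref{ratios}, namely $q_{k+1} < (a_{k+1}+1)q_k$.

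Each route buys something different. Yours gives sharper information: the identity at index $k-1$, $q_k\epsilon_{k-1} = 1 - q_{k-1}\epsilon_k$, is the exact form of the first inequality. Derived from $q_{k+1}p_k - p_{k+1}q_k = \pm 1$ together with the alternating signs of $q_j\alpha - p_j$, it is also independent of the three-gap machinery. The paper's route is shorter and uses only the qualitative statement of Lemma~\ref{longshort}. In particular, the ``main obstacle'' you identify is self-imposed, since the crude bound $q_kL_k > 1$ already suffices.
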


\begin{proof}
The first fact follows since the sum of the $q_k$ gaps in $Q_k$ is equal to $1$, and the gaps have lengths $S_k = \epsilon_{k-1}$ and $L_k > S_k$. The same logic immediately yields $q_k L_k > 1$, and so by \Cref{ratios},
\[
1 < q_k L_k = q_k (\epsilon_{k-1} + \epsilon_k) < q_k \epsilon_k (a_{k+1} + 2).
\]
\end{proof}

Finally, we need a simple description of which elements of $Q_k$ correspond to short/long intervals.

\begin{lemma}\label{endpts}
If $q_k \alpha$ is positive, then $i\alpha$ is the left endpoint of a long gap in $Q_k$ iff $0 \leq i < q_{k-1}$. If $q_k \alpha$ is negative, then this characterizes right endpoints of long gaps. 
\end{lemma}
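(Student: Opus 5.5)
The plan is to identify the gap containing $q_k\alpha$ and read off its endpoints. By symmetry ($\alpha\mapsto -\alpha$ reflects the circle and swaps left and right), we may assume $q_k\alpha$ is positive, so $q_k\alpha = \epsilon_k$ lies just to the right of $0$. The basic observation is that translation by $q_k\alpha$ moves every point of $Q_k$ by $\epsilon_k$. Since $\epsilon_k < \epsilon_{k-1} = S_k$, this shift is less than every gap length, so each translate $i\alpha + \epsilon_k = (i+q_k)\alpha$ lands strictly inside the gap of $Q_k$ whose left endpoint is $i\alpha$.

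With that in hand, we characterize long gaps as follows. Fix $i$ with $0\le i<q_k$ and let $j\alpha$ be the right neighbor of $i\alpha$ in $Q_k$. The gap length is $d = \|(j-i)\alpha\|$ with $|j-i|<q_k$.

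\emph{Case $i<q_{k-1}$.} We have $i+q_k < q_k+q_{k-1}$. Consider the point $(i+q_k)\alpha$, which lies in the gap at distance $\epsilon_k$ from $i\alpha$, and compare it with $j\alpha$. Their difference is $\|(j-i-q_k)\alpha\|$, and $0<|j-i-q_k|<q_k+q_{k-1}\le q_{k+1}$. By best approximation, any $0<m<q_{k+1}$ satisfies $\|m\alpha\|\ge\epsilon_k$, so the gap has length at least $2\epsilon_k$. That alone does not distinguish the two lengths. A cleaner route uses the ordering of the two lengths: $L_k - S_k = \epsilon_k$, and $S_k = \epsilon_{k-1}$ is realized exactly by the displacement $\pm q_{k-1}\alpha$.

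So the real test is this. The gap at $i\alpha$ is short iff its right neighbor is $(i\pm q_{k-1})\alpha$ in the right direction, i.e. iff the gap is a translate of the difference vector $q_{k-1}\alpha = -\epsilon_{k-1}$ (which is negative, since signs alternate). The right neighbor at distance $\epsilon_{k-1}$ to the right of $i\alpha$ is the point $(i-q_{k-1})\alpha$, because $-q_{k-1}\alpha = +\epsilon_{k-1}$. This point lies in $Q_k$ iff $i\ge q_{k-1}$.

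\emph{Converse.} If $i\ge q_{k-1}$, then $(i-q_{k-1})\alpha\in Q_k$ sits at distance $\epsilon_{k-1}$ to the right. No point of $Q_k$ lies strictly between, since all gaps are at least $S_k$. Hence the gap is short. If $i<q_{k-1}$, then $(i-q_{k-1})\alpha\notin Q_k$. The right gap cannot have length exactly $\epsilon_{k-1}$ via any other index, because $\|m\alpha\| = \epsilon_{k-1}$ with $|m|<q_k$ forces $m=\pm q_{k-1}$ by uniqueness of best approximations, and the sign must be $-q_{k-1}$. So by \Cref{longshort} the gap is long.

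The main obstacle is justifying the uniqueness claim: that $\|m\alpha\|=\epsilon_{k-1}$ with $0<|m|<q_k$ forces $m=\pm q_{k-1}$, with the correct sign. I would cite the standard best-approximation property, $\|m\alpha\|>\epsilon_{k-1}$ for $0<m<q_k$ and $m\ne q_{k-1}$, together with irrationality, which excludes equality. Finally, the negative case follows by reflecting the circle, which turns left endpoints into right endpoints.
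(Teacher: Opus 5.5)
Your argument is correct and is essentially the paper's proof: $i\alpha$ is the left endpoint of a short gap iff $i\alpha+S_k=(i-q_{k-1})\alpha$ lies in $Q_k$, i.e.\ iff $i\ge q_{k-1}$, and the negative case is handled symmetrically. The detour through the $2\epsilon_k$ bound and the appeal to uniqueness of best approximations are unnecessary: the only point at distance $\epsilon_{k-1}$ to the right of $i\alpha$ is $(i-q_{k-1})\alpha$, and irrationality of $\alpha$ makes $m\mapsto m\alpha \bmod 1$ injective, so this point lies in $Q_k$ exactly when $0\le i-q_{k-1}$.
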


\begin{proof}
When $q_k \alpha$ is positive, simply note that $i\alpha$ is the left endpoint of a short gap in $Q_k$ iff $i \alpha + S_k = (i - q_{k-1}) \alpha$ is in $Q_k$, which happens iff $i \geq q_{k-1}$. The argument for negative $q_k \alpha$ is trivially similar.
\end{proof}

Much of our proof relies on how intervals at each step break down at future steps; the following is easily checked.

\begin{theorem}\label{decomp}
If $q_k \alpha$ is positive, then each short interval in $Q_k$ breaks into, from left to right, $a_{k+1} - 1$ short intervals in $Q_{k+1}$ followed by a single long interval in $Q_{k+1}$, and each long interval in $Q_K$ breaks into, from left to right, $a_{k+1}$ short intervals in $Q_{k+1}$ followed by a single long interval in $Q_{k+1}$. If $q_k \alpha$ is negative, all descriptions above are instead from right to left. 
\end{theorem}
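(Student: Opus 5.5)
The statement to prove is Theorem~\ref{decomp}. I will treat only the case $q_k\alpha$ positive; the negative case follows by applying the reflection $x \mapsto -x$ of $\mathbb{T}$, which reverses orientation and swaps the roles of left and right.

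The set $Q_{k+1}$ is obtained from $Q_k$ by adding the points $i\alpha$ with $q_k \le i < q_{k+1}$. By the recursion $q_{k+1} = a_{k+1}q_k + q_{k-1}$, each such $i$ can be written uniquely as $i = j + m q_k$ with $0 \le j < q_k$ and $1 \le m \le a_{k+1}$, subject to the extra condition $j < q_{k-1}$ when $m = a_{k+1}$. Since $q_k\alpha$ is positive, the point $(j+mq_k)\alpha$ equals $j\alpha + m\epsilon_k$. So the new points form arithmetic progressions of step $\epsilon_k$ that start at $j\alpha$ and move to the right.

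Fix a gap $[j\alpha, j\alpha + g]$ of $Q_k$ whose left endpoint is $j\alpha$. By \Cref{endpts}, $g = L_k = \epsilon_{k-1}+\epsilon_k$ when $j < q_{k-1}$, and $g = S_k = \epsilon_{k-1}$ otherwise.

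First I check that the progression from $j\alpha$ stays inside this gap. The relation $\epsilon_{k-1} = a_{k+1}\epsilon_k + \epsilon_{k+1}$ shows that $a_{k+1}\epsilon_k < \epsilon_{k-1} \le g$. Hence all the points $j\alpha + m\epsilon_k$ lie strictly inside the gap. Because every new point $(j+mq_k)\alpha$ comes from exactly one $j$, the gap receives exactly its own progression and nothing else.

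Next I count the subintervals.

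\textbf{Long gap ($j < q_{k-1}$).} All $a_{k+1}$ points $m = 1,\dots,a_{k+1}$ are added. Reading from the left, this gives $a_{k+1}$ pieces of length $\epsilon_k$. The final piece has length
\[
L_k - a_{k+1}\epsilon_k = \epsilon_{k-1}+\epsilon_k - a_{k+1}\epsilon_k = \epsilon_k + \epsilon_{k+1}.
\]

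\textbf{Short gap ($j \ge q_{k-1}$).} Only the points $m = 1,\dots,a_{k+1}-1$ are added. This gives $a_{k+1}-1$ pieces of length $\epsilon_k$, followed by a final piece of length
\[
\epsilon_{k-1} - (a_{k+1}-1)\epsilon_k = \epsilon_k + \epsilon_{k+1}.
\]

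By \Cref{longshort} applied at level $k+1$, we have $S_{k+1} = \epsilon_k$ and $L_{k+1} = \epsilon_k + \epsilon_{k+1}$. So in both cases the gap splits into short intervals followed by a single long interval, with exactly the counts claimed in Theorem~\ref{decomp}.

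The step I expect to need the most care is the bookkeeping that the new points fall into the gap to the right of $j\alpha$, which is where the sign of $q_k\alpha$ matters, together with the short/long identification through \Cref{endpts}. The remaining arithmetic is routine.
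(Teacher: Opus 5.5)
Your proof is correct, and it is the routine check the paper leaves to the reader, since the paper gives no proof beyond calling the decomposition ``easily checked'' and illustrating it in Figure~\ref{fig1}: you write the new points as $j\alpha + m\epsilon_k$, use $a_{k+1}\epsilon_k < \epsilon_{k-1}$ to confine each progression to the gap just right of $j\alpha$, and identify that gap's type with \Cref{endpts}. The reflection step is also fine, though strictly it relabels the convergents of $-\alpha$ (their denominators are those of $\alpha$ shifted by one index), so it is equally clean to rerun the same computation with leftward progressions $j\alpha - m\epsilon_k$ and the right-endpoint characterization in \Cref{endpts}.
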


See Figure~\ref{fig1} for an illustration of how intervals break down over several steps according to Theorem~\ref{decomp}. 

\begin{figure}
\centering
\begin{tikzpicture}
\draw[thick] (0,0) -- (15, 0);
\filldraw[black] (0,0) circle (3pt);
\filldraw[black] (15,0) circle (3pt);
\node at (-0.5, 0){$0$};
\node at (15.5,0){$1$};
\node at (7.5,0.5){L};
\draw[thick] (0,-1.5) -- (15, -1.5);
\filldraw[black] (0,-1.5) circle (3pt);
\filldraw[black] (15,-1.5) circle (3pt);
\filldraw[black] (165/42,-1.5) circle (3pt);
\filldraw[black] (330/42,-1.5) circle (3pt);
\node at (82.5/42,-1){S};
\node at (247.5/42,-1){S};
\node at (480/42,-1){L};
\draw[thick] (0,-3) -- (15, -3);
\filldraw[black] (0,-3) circle (3pt);
\filldraw[black] (15,-3) circle (3pt);
\filldraw[black] (165/42,-3) circle (3pt);
\filldraw[black] (330/42,-3) circle (3pt);
\filldraw[black] (495/42,-3) circle (3pt);
\node at (82.5/42,-2.5){L};
\node at (247.5/42,-2.5){L};
\node at (412.5/42,-2.5){L};
\node at (562.5/42,-2.5){S};
\draw[thick] (0,-4.5) -- (15, -4.5);
\filldraw[black] (0,-4.5) circle (3pt);
\filldraw[black] (15,-4.5) circle (3pt);
\filldraw[black] (30/42,-4.5) circle (3pt);
\filldraw[black] (60/42,-4.5) circle (3pt);
\filldraw[black] (90/42,-4.5) circle (3pt);
\filldraw[black] (120/42,-4.5) circle (3pt);
\filldraw[black] (165/42,-4.5) circle (3pt);
\filldraw[black] (195/42,-4.5) circle (3pt);
\filldraw[black] (225/42,-4.5) circle (3pt);
\filldraw[black] (255/42,-4.5) circle (3pt);
\filldraw[black] (285/42,-4.5) circle (3pt);
\filldraw[black] (330/42,-4.5) circle (3pt);
\filldraw[black] (360/42,-4.5) circle (3pt);
\filldraw[black] (390/42,-4.5) circle (3pt);
\filldraw[black] (420/42,-4.5) circle (3pt);
\filldraw[black] (450/42,-4.5) circle (3pt);
\filldraw[black] (495/42,-4.5) circle (3pt);
\filldraw[black] (525/42,-4.5) circle (3pt);
\filldraw[black] (555/42,-4.5) circle (3pt);
\filldraw[black] (585/42,-4.5) circle (3pt);
\node at (15/42,-4){S};
\node at (45/42,-4){S};
\node at (75/42,-4){S};
\node at (105/42,-4){S};
\node at (142.5/42,-4){L};
\node at (180/42,-4){S};
\node at (210/42,-4){S};
\node at (240/42,-4){S};
\node at (270/42,-4){S};
\node at (307.5/42,-4){L};
\node at (345/42,-4){S};
\node at (375/42,-4){S};
\node at (405/42,-4){S};
\node at (435/42,-4){S};
\node at (472.5/42,-4){L};
\node at (510/42,-4){S};
\node at (540/42,-4){S};
\node at (570/42,-4){S};
\node at (607.5/42,-4){L};
\end{tikzpicture}
\caption{Intervals in $Q_k$ for $\alpha = 11/42 = [3,1,4,2]$ and $0 \leq k \leq 3$}\label{fig1}
\end{figure}

\section{Proofs}\label{sec:proofs}

The main idea of our proofs is to characterize whether a coding sequence has repetition around the origin in terms of location of the point of the circle being coded. 
Specifically, for a coding sequence $c = c^{(x,I,\alpha)}$, $c(i \pm q_k) = c(i)$ iff $x + i\alpha$ and 
$x + (i \pm q_k) \alpha$ lie in the same interval $I$ or $I^c$. We will from now on take $I$ to have endpoints $0$ and $a$ without loss of generality, since it does not change the space $X^{(I,\alpha)}$, and we also ignore coding sequences for $x \in \mathbb{Z}\alpha \cup (a + \mathbb{Z}\alpha)$, since they are a set of measure zero. We now make the following definition.

\begin{definition}
For any $\alpha$ and $a$ and any $k \in \mathbb{N}$, define $T_k = Q_k \cup (Q_k + a)$ and
$\beta_k = 1 - m(T_k + (-\epsilon_k, \epsilon_k))$.
\end{definition}

We can now make the following characterization.

\begin{lemma}\label{Tk}
If $\epsilon_k < \min(a, 1-a)$, then a coding sequence $c = c^{(x,I,\alpha)}$ satisfies $c(i \pm q_k) = c(i)$ for all $-q_k < i \leq 0$ iff $x \notin T_k + (-\epsilon_k, \epsilon_k)$.    
\end{lemma}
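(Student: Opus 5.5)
Fix $i$ with $-q_k < i \le 0$ and set $y = x + i\alpha$. By definition, $c(i) = \chi_I(y)$ and $c(i\pm q_k) = \chi_I(y \pm q_k\alpha)$. Since $d(0,q_k\alpha)=\epsilon_k$, the points $y$ and $y \pm q_k\alpha$ are the endpoints of an arc of length $2\epsilon_k$ centred at $y$. The hypothesis $\epsilon_k < \min(a,1-a)$ says this arc is shorter than twice each of $I$ and $I^c$. Hence the three points $y-q_k\alpha$, $y$, $y+q_k\alpha$ all lie in the same set ($I$ or $I^c$) iff neither boundary point $0$ nor $a$ lies in the open arc $(y-\epsilon_k, y+\epsilon_k)$. (Boundary points landing exactly on an endpoint are excluded, since we ignore $x \in \mathbb{Z}\alpha \cup (a+\mathbb{Z}\alpha)$.)

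To justify this, suppose first the open arc contains neither $0$ nor $a$. Then it lies inside one complementary arc of $\{0,a\}$, so $y$ and both of $y\pm q_k\alpha$ (which lie in its closure, and not on $\{0,a\}$) are coded the same. Conversely, suppose say $0 \in (y-\epsilon_k,y+\epsilon_k)$. Then one of the two half-arcs $[y-\epsilon_k,y)$ or $(y,y+\epsilon_k]$ crosses $0$. Its length is $\epsilon_k < \min(a,1-a)$, so it cannot also contain $a$, and it therefore crosses the boundary of $I$ exactly once. So $y$ and the corresponding point $y\pm q_k\alpha$ get different codes. The same argument works for $a$.

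This is the step needing the most care. The length bound is exactly what rules out both $0$ and $a$ lying in one half-arc, which would give two crossings and equal codes. So the hypothesis is precisely what makes the "iff" valid.

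It remains to translate back to $x$. The condition fails for some $i$ iff there is some $0 \le j < q_k$ (with $j=-i$) and some $b \in \{0,a\}$ with $d(x - j\alpha, b) < \epsilon_k$. Equivalently, $x \in b + j\alpha + (-\epsilon_k,\epsilon_k)$. Now $\{j\alpha : 0\le j<q_k\} = Q_k$, so the union of these sets over $j$ and $b$ is exactly $Q_k \cup (Q_k + a) + (-\epsilon_k,\epsilon_k) = T_k + (-\epsilon_k,\epsilon_k)$. This gives the lemma.

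One check is the sign convention $i\in(-q_k,0]$ producing $+j\alpha$ with $j\in[0,q_k)$: indeed $x+i\alpha$ near $b$ means $x$ near $b - i\alpha = b + j\alpha$, which matches $Q_k$. Everything else is routine.
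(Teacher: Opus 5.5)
Your proof is correct and takes essentially the same approach as the paper: the condition fails at index $-j$ exactly when an endpoint $0$ or $a$ of $I$ lies within $\epsilon_k$ of $x - j\alpha$ for some $0 \le j < q_k$, and the hypothesis $\epsilon_k < \min(a,1-a)$ ensures that a half-arc of length $\epsilon_k$ contains at most one endpoint. You spell out the converse direction and the exclusion of $\mathbb{Z}\alpha \cup (a+\mathbb{Z}\alpha)$ more carefully than the paper, which calls the converse obvious, but the argument is the same.
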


\begin{proof}
Essentially this was given above. Consider such a coding sequence. By definition, $x \in T_k + (-\epsilon_k, \epsilon_k)$. iff there exists $0 \leq i < q_k$ so that $x$ is within distance $\epsilon_k$ of either $i\alpha$ or $i\alpha + a$. In both cases, since $x - i\alpha$ is within distance $\epsilon_k$ of an endpoint of $I$ and since $\epsilon_k$ is less than the lengths of both $I$ and $I^c$, either $x - i\alpha - q_k \alpha$ or $x - i\alpha + q_k \alpha$ is in a different interval to $x - i\alpha$. Equivalently, $c(i), c(i \pm q_k)$ are not all equal.

The converse direction is obvious, given the observation that if $c(i), c(i + q_k), c(i - q_k)$ are not all equal, then either $c(i) \neq c(i + q_k)$ or $c(i) \neq c(i - q_k)$.

\end{proof}

This allows for a simple sufficient condition for $3$-block Gordon structure.

\begin{proposition}\label{longint}
For any $\alpha$ and $a$, if $\limsup \beta_k > 0$, then almost every $x \in X^{(I, \alpha)}$ has a shift with $3$-block Gordon structure.
\end{proposition}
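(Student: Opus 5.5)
Let $G_k = \mathbb{T} \setminus (T_k + (-\epsilon_k, \epsilon_k))$, so that $m(G_k) = \beta_k$. Since $\epsilon_k \to 0$, for all large $k$ we have $\epsilon_k < \min(a, 1-a)$. For such $k$, Lemma~\ref{Tk} says that $x \in G_k$ exactly when the coding sequence $c^{(x,I,\alpha)}$ satisfies the 3-block Gordon condition at scale $q_k$ around the origin.

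The plan is to show that $E = \limsup_k G_k$ has positive measure, and then use ergodicity to pass to almost every point up to a shift.

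\textbf{Step 1: $E$ has positive measure.} Choose $\delta > 0$ and an infinite set $K$ of indices with $\beta_k > \delta$ for all $k \in K$. Since $\limsup_k G_k \supseteq \bigcap_N \bigcup_{k \ge N, k \in K} G_k$, continuity of measure along decreasing sets gives
\[
m(E) \ge \lim_N m\Big(\bigcup_{k \ge N, k\in K} G_k\Big) \ge \delta .
\]
This is just the reverse Fatou inequality, and it gives $m(E) \ge \delta > 0$.

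Every $x \in E$ lies in $G_k$ for infinitely many $k$. Taking those $q_k$ as the Gordon sequence, $c^{(x,I,\alpha)}$ has 3-block Gordon structure.

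\textbf{Step 2: from positive measure to almost every point.} Consider the set $E'$ of $x$ such that some shift of $c^{(x,I,\alpha)}$ has 3-block Gordon structure. Shifting the coding by $n$ corresponds to replacing $x$ by $x + n\alpha$, so $E' = \bigcup_n (E + n\alpha)$. This set is invariant under rotation by $\alpha$ and contains $E$, so it has positive measure. Irrational rotation is ergodic, hence $m(E') = 1$. (Measurability is fine: each $G_k$ is a finite union of intervals.)

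\textbf{Step 3: transfer to the subshift.} The map $x \mapsto c^{(x,I,\alpha)}$ pushes Lebesgue measure forward to the unique invariant measure on $X^{(I,\alpha)}$, once we discard the null set $\mathbb{Z}\alpha \cup (a + \mathbb{Z}\alpha)$. This map is a bijection off a null set, so the conclusion holds for almost every $x \in X^{(I,\alpha)}$.

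I expect the only delicate point to be this last transfer. One has to note that the coding map is measurable, that it intertwines rotation by $\alpha$ with the shift, and that by unique ergodicity the pushforward of Lebesgue measure must be the unique invariant measure. The rest of the argument is routine.
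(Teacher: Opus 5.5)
Your proof is correct and follows essentially the same route as the paper: you identify the complement of $T_k+(-\epsilon_k,\epsilon_k)$ with the scale-$q_k$ Gordon set via Lemma~\ref{Tk}, get $m(\limsup G_k)\ge\limsup\beta_k>0$, and conclude by ergodicity of the rotation together with the almost-everywhere bijective, measure-preserving coding map (the paper invokes that map at the start rather than the end). One small imprecision: the set of $x$ whose coding has some shift with Gordon structure contains $\bigcup_n(E+n\alpha)$ rather than equaling it, but containment is all your argument needs.
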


\begin{proof}

The map $x \mapsto c^{(x,I,\alpha)}$ sending $x \in \mathbb{T}$ to its coding sequence is a measure-preserving bijection off of the countable set where it is not injective, and so it suffices to show that almost-every $x \in \mathbb{T}$ yields coding sequence with $3$-block Gordon structure.

Define $G_k$ to be the set of $x \in \mathbb{T}$ whose coding sequence has the $3$-block Gordon structure at scale $q_k$, i.e., $c^{(x,I,\alpha)}(i) = c^{(x,I,\alpha)}(i \pm q_k)$ for all $-q_k < i \leq 0$. For sufficently large $k$, $\epsilon_k < \min(a, 1-a)$, and for all such $k$, by Lemma~\ref{Tk} we have
\[
G_k^c = T_k + (-\epsilon_k, \epsilon_k).
\]
This means that $\mu(G_k) = \beta_k$. Then $m\left(\limsup G_k\right) \geq \limsup m(G_k) = \limsup \beta_k > 0$. This means that there is a subset of $X^{(I, \alpha)}$ of positive measure of points with $3$-block Gordon structure. The set of points with a shift with $3$-block Gordon structure is then shift-invariant and positive measure, therefore measure $1$ by ergodicity, completing the proof.

\end{proof}

For future reference, we note that $\beta_k$ can be interpreted in the following way: take all gaps between elements of $T_k$, i.e. the intervals which comprise its complement. Then $\beta_k$ is the sum, over all such gaps of length greater than $2\epsilon_k$, of the length minus $2\epsilon_k$. Informally, it measures the total overage of gaps' lengths beyond $2\epsilon_k$.

We are now prepared to prove \Cref{mainthm}, which will be broken into several results. We first describe the proof of the previous best known result, \Cref{kaminaga}, using our terminology and \Cref{longint}.

\begin{proof}[Proof of \Cref{kaminaga}]
Note that since $T_k + (-\epsilon_k, \epsilon_k)$ is a union of $2q_k$ intervals of length $2\epsilon_k$, 
$\beta_k > 1 - 4\epsilon_k q_k$. Since $\limsup a_k \geq 4$, there either exist infinitely many $k$ for which $a_{k+1} \geq 5$ or there exist infinitely many $k$ for which $a_{k+1} = 4$ and $a_{k+2} \leq 4$. In either case, $\epsilon_{k-1}/\epsilon_k \geq 4.2$ by Lemma~\ref{ratios}. Therefore, by Lemma~\ref{qeps}, for all such $k$,
\[
\beta_k > 1 - 4\epsilon_k q_k > 1 - 4/4.2 \epsilon_{k-1} q_k > 1/21.
\]
Since this holds for infinitely many $k$, we are finished by Proposition~\ref{longint}.
\end{proof}

A naive extension of this idea will not allow $\limsup a_k$ to be reduced to $3$, since $4 \epsilon_k q_k$ can absolutely be greater than $1$ in this case. However, we can use more nuanced information about the gap lengths from \Cref{longshort}. 

\begin{theorem}\label{case1}
If $\limsup a_k = 3$, then almost every point of $X^{(I, \alpha)}$ has a shift with $3$-block Gordon structure.
\end{theorem}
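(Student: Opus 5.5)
The plan is to apply Proposition~\ref{longint}: we will show that $\beta_k$ is bounded below by a positive constant along an infinite set of $k$. Since $\limsup a_k = 3$, we have $a_{k+1} = 3$ for infinitely many $k$, and $a_{k+2} \le 3$ for all large $k$. For such $k$, Lemma~\ref{ratios} gives $\epsilon_{k-1}/\epsilon_k \in (3 + 1/4, 4)$. Hence by Lemma~\ref{longshort} the short gaps of $Q_k$ satisfy $S_k > 3.25\,\epsilon_k$ and the long gaps satisfy $L_k > 4.25\,\epsilon_k$. By Lemma~\ref{qeps}, $q_k\epsilon_k > 1/5$. Also $q_{k-1} > q_k/4$, so $q_{k-1}\epsilon_k > 1/20$. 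The naive bound $1 - 4q_k\epsilon_k$ used for \Cref{kaminaga} fails here. We will instead use the interpretation of $\beta_k$ as the total overage of the gaps of $T_k$ beyond $2\epsilon_k$, and only count positive contributions.

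The key step is a local estimate for each gap $J$ of $Q_k$ of length $g \in \{S_k, L_k\}$, according to how many points of $Q_k + a$ lie inside it. The set $Q_k + a$ is a translate of $Q_k$, so its own consecutive points are at least $S_k > 3.25\epsilon_k$ apart. Since $L_k < 2S_k$, any gap $J$ contains at most two points of $Q_k + a$.
\begin{itemize}
\item If $J$ contains no such point, its overage is $g - 2\epsilon_k > \epsilon_k$.
\item If $J$ contains exactly one such point, splitting $J$ into pieces of lengths $t$ and $g - t$, the overage is at least $\max(t - 2\epsilon_k, 0) + \max(g - t - 2\epsilon_k, 0) \ge g - 4\epsilon_k$. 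This is positive, at least $0.25\,\epsilon_k$, when $J$ is long.
\item If $J$ contains two such points, they must sit near the two ends of $J$, and the middle piece has length at least $S_k > 2\epsilon_k$.
\end{itemize}
So for a long gap of $Q_k$, the overage contributed inside $J$ is always at least roughly $c\,\epsilon_k$ for an explicit constant $c > 0$, except possibly in the two-point case. That case will need a separate count.

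To handle the two-point case, we will pair it off. Each point of $Q_k + a$ lies in exactly one gap of $Q_k$, and there are $q_k$ such points and $q_k$ gaps. So every gap containing two points forces some other gap of $Q_k$ to contain none, and that empty gap contributes overage at least $S_k - 2\epsilon_k > 1.25\,\epsilon_k$. A counting argument over the $q_{k-1}$ long gaps of $Q_k$ (Lemma~\ref{endpts}) then shows that at least a fixed proportion of these long gaps either yield overage $\ge 0.25\,\epsilon_k$ themselves, or are matched injectively with an empty gap yielding $\ge 1.25\,\epsilon_k$. This gives $\beta_k \ge c'\,q_{k-1}\epsilon_k \ge c'/20$ along infinitely many $k$, and Proposition~\ref{longint} finishes the proof.

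The main obstacle is this bookkeeping in the two-point case: making the injection from crowded long gaps to empty gaps precise, without double counting, and checking that the overage of a gap with two interior points cannot drop to zero. I would first try to sharpen the geometric claim. The offsets of the points of $Q_k + a$ inside gaps of $Q_k$ vary by at most about $\epsilon_k$ across consecutive gaps, by the near-arithmetic structure from Theorem~\ref{decomp}. If that holds, only a few configuration types occur uniformly, and one can verify each type directly.
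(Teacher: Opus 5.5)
Your proposal is sound. The route you actually sketch is the paper's. The paper also sorts the $q_{k-1}$ long gaps of $Q_k$ by how many points of $Q_k+a$ they contain ($M$ with one, $N$ with two). It compensates for the crowded ones by counting gaps free of $Q_k+a$ (at least $M+2N-q_{k-1}$ empty short gaps), and that count plays exactly the role of your injection.

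However, the step you single out as the main obstacle is already settled by your own third bullet. Two points of $Q_k+a$ inside the same open long gap are consecutive in $Q_k+a$ and less than $L_k$ apart, hence exactly $S_k$ apart. So the piece between them is itself a gap of $T_k$, with overage $S_k-2\epsilon_k>1.25\,\epsilon_k$. Thus every long gap of $Q_k$, whatever it contains, contributes at least $L_k-4\epsilon_k>\epsilon_k/4$ to $\beta_k$. Since the contributions from different gaps of $Q_k$ come from different gaps of $T_k$,
\[
\beta_k > \frac{1}{4}\, q_{k-1}\epsilon_k > \frac{1}{80}
\]
follows at once. No pairing is needed, and neither is your speculative claim that the offsets of $Q_k+a$ vary slowly.

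This purely local version is also cleaner than the paper's write-up. The paper's displayed estimate adds up the lengths $L_k$, $L_k/2$, $L_k-\epsilon_k$ of the surviving gaps of $T_k$ rather than their excess over $2\epsilon_k$. Redoing the paper's count with overages still gives $\beta_k \geq c\,q_{k-1}\epsilon_k$, only with a smaller constant than the $1/20$ stated there.
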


\begin{proof}
Since $\limsup a_k = 3$, there exist infinitely many $k$ for which $a_{k+1} = 3$ and $a_{k+2} \leq 3$. For any such $k$, by 
Lemma~\ref{ratios}, $S_k = \epsilon_{k-1} > 3.25 \epsilon_k$ and $L_k = S_k + \epsilon_k > 4.25 \epsilon_k$. Now, regardless of the value of $a$, we will show that there are so few elements of $Q_k + a$ that $T_k = Q_k \cup (Q_k + a)$ will still have many large gaps.

Denote by $N$ the number of long gaps in $Q_k$ containing two points in $Q_k + a$, and by 
$M$ the number of long intervals in $Q_k$ containing exactly one point in $Q_k + a$. 
Then $M + N \leq q_{k-1}$ since there are $q_{k-1}$ long gaps in $Q_k$, and of the $q_{k-1}$ 
gaps of length $L_k > 4.25 \epsilon_k$ and $q_k - q_{k-1}$ gaps of length $S_k > 3.25 \epsilon_k$ in $Q_k$, we have

\begin{itemize}
\item $q_{k-1} - M - N$ long intervals disjoint from $Q_k + a$
\item $M$ long intervals containing only one point of $Q_k + a$
\item at least $(q_k - q_{k-1}) - (q_k - M - 2N) = M + 2N - q_{k-1}$ short gaps disjoint from $Q_k + a$.
\end{itemize}

This implies that $T_k = Q_k \cup (Q_k + a)$ contains at least $q_{k-1} - M - N$ gaps of length $L_k$, 
at least $M$ gaps of length $L_k/2$, and at least $M + 2N - q_{k-1}$ gaps of length $S_k = L_k - \epsilon_k$. Therefore,
\begin{multline*}
\beta_k = 1 - m(T_k + (-\epsilon_k, \epsilon_k)) > (q_{k-1} - M - N) L_k + ML_k/2 + 
(M + 2N - q_{k-1})(L_k - \epsilon_k) \\
= q_{k-1} \epsilon_k + (M/2 + N)(L_k - 2\epsilon_k) > q_{k-1} \epsilon_k > q_k \epsilon_k / 4 > \frac{1}{20}
\end{multline*}
by Lemma~\ref{ratios}.

Since this holds for infinitely many values of $k$, $\limsup \beta_k \geq \frac{1}{20}$, and we are finished
by Proposition~\ref{longint}.

\end{proof}

Although we do not deal with it in detail in this work, we note that one can generalize the definition of
$X^{(I,\alpha)}$ to codings by partitions by $k > 2$ intervals. The proof of \cref{case1} is easily adapted
(by considering not just the sets $Q_k$ and $a + Q_k$ but shifts corresponding to each endpoint of the partition) to show that $\limsup a_n \geq 2k-1$ implies almost-sure $3$-block Gordon structure for every $k$-interval partition. This leads to the following natural direction for future work.



\begin{question}
What sufficient and/or necessary conditions can be given for almost-sure $3$-block Gordon structure of a $k$-interval circle coding subshift? In particular, for $k > 2$, is it still the case that almost-sure $3$-block Gordon structure holds for a `typical' rotation and partition?
\end{question}

We can now assume for remaining cases that eventually $a_k \leq 2$. To deal with this, we will need to not only use information about the gaps in $Q_k$ but also the locations of points in $Q_k + a$ within those gaps. (We remark that the proof of \Cref{case1} in fact would work not only for $T_k$, but for the union of $Q_k$ and ANY set with cardinality $q_k$).
The first step is to represent $a$ via nearby elements of $Q_k$. Specifically, for any $k$, we write $a = i_k \alpha + \eta_k$ for $0 \leq i_k < q_k$ and $\eta_k$ with the same sign as $q_k \alpha$ and with minimal absolute value. 

Then, we can write
\[
Q_k + a = \{i\alpha + (i_k \alpha + \eta_k) \ : \ 0 \leq i < q_k\} = 
\{j \alpha + \eta_k \ : \ i_k \leq i < q_k\} \cup \{j \alpha + (q_k \alpha + \eta_k) \ : \ 0 \leq j < i_k\}.
\]
In other words, we can think of $Q_k + a$ as coming from shifting the first $i_k$ elements of $Q_k$ by $\eta_k + \epsilon_k$ and the rest by $\eta_k$ (in direction dependent on sign of $q_k \alpha$). This interpretation will allow us to conclude that $T_k$ has large enough intervals in many cases. 

We first prove a very useful auxiliary fact, showing that values of $k$ where $i_k$ and $\eta_k$ are simultaneously small lead to values of $\beta_k$ bounded away from $0$.

\begin{lemma}\label{smallboth}
For any $k$, if $a_k, a_{k+1}, a_{k+2} \leq 2$, $i_k/q_k < 0.25$ and $\eta_k/\epsilon_k < 0.25$, then $\beta_k > 0.001$.
\end{lemma}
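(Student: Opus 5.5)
We plan to bound $\beta_k$ from below directly, using its interpretation as the total overage beyond $2\epsilon_k$ of the gaps of $T_k$. Assume without loss of generality that $q_k\alpha$ is positive, so $\eta_k\in[0,\epsilon_k/4)$. By the decomposition $Q_k+a = \{j\alpha+\eta_k : i_k\le j<q_k\}\cup\{j\alpha+\epsilon_k+\eta_k : 0\le j<i_k\}$, every point of $Q_k+a$ lies just to the right of a point of $Q_k$. For $j\ge i_k$ the offset is $\eta_k<\epsilon_k/4$; for $j<i_k$ it is $\epsilon_k+\eta_k<1.25\epsilon_k$.

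\textbf{First key step.} We count gaps of $Q_k$ whose left endpoint $j\alpha$ has $j\ge i_k$. There are $q_k-i_k>0.75q_k$ such gaps. In each of them, the only point of $Q_k+a$ inside is $j\alpha+\eta_k$. The point $(j')\alpha+\epsilon_k+\eta_k$ cannot also fall inside, because $\epsilon_k+\eta_k<1.25\epsilon_k$ is smaller than every gap length: $S_k=\epsilon_{k-1}>2\epsilon_k$ by Lemma~\ref{ratios}, since $a_{k+1}\ge1$ gives $\epsilon_{k-1}/\epsilon_k>a_{k+1}+1/3$. More precisely, we use the short-gap bound $\epsilon_{k-1}/\epsilon_k> a_{k+1}+(a_{k+2}+1)^{-1}\ge 1+1/3$ in the worst case $a_{k+1}=1$. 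So such a gap of $Q_k$ splits in $T_k$ into a tiny piece of length $\eta_k$ and a piece of length at least $S_k-\eta_k$.

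The resulting overage is $(S_k-\eta_k-2\epsilon_k)^+$. When $a_{k+1}=2$ this is at least $(2.25-0.25-2)\epsilon_k$, which is not obviously positive. This exposes the main obstacle: when $a_{k+1}=1$, short gaps are shorter than $2\epsilon_k$ and contribute nothing. We must therefore use long gaps, $L_k=S_k+\epsilon_k$, and also sum overages over consecutive gaps.

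\textbf{Second step.} To handle this, we aggregate. Consider a piece of $T_k^c$ of the form $(j\alpha+\eta_k, j''\alpha)$, where $j''\alpha$ is the next point of $Q_k$ (or of $T_k$). Its length is $G-\eta_k$, where $G\in\{S_k,L_k\}$. We have $L_k-\eta_k-2\epsilon_k=\epsilon_{k-1}-\epsilon_k-\eta_k>\epsilon_{k-1}-1.25\epsilon_k$. Since $a_{k+1}\ge1$ and $a_{k+2}\le2$, $\epsilon_{k-1}>(1+1/3)\epsilon_k$, so each long gap with $j\ge i_k$ contributes at least $\epsilon_k/12$.

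\textbf{Third step: counting long gaps.} By Lemma~\ref{endpts}, the left endpoints of long gaps are exactly $j\alpha$ with $0\le j<q_{k-1}$. Those with $j\ge i_k$ number at least $q_{k-1}-i_k$. Since $a_k\le2$, Lemma~\ref{ratios} gives $q_{k-1}>q_k/3$, so this count exceeds $q_k/3-q_k/4=q_k/12$. Hence
\[
\beta_k>\frac{q_k}{12}\cdot\frac{\epsilon_k}{12}>\frac{1}{144(a_{k+1}+2)}\ge\frac{1}{576}>0.001,
\]
using Lemma~\ref{qeps} and $a_{k+1}\le2$.

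If the constant from the long-gap estimate turns out too weak in a subcase, we fall back on short gaps when $a_{k+1}=2$: there $S_k>2.25\epsilon_k$ and $S_k-\eta_k-2\epsilon_k>0$, so all $0.75q_k$ gaps contribute. For negative $q_k\alpha$, the argument is mirror-symmetric, with right endpoints replacing left endpoints.
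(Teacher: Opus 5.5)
Your proposal is correct and follows essentially the same route as the paper: the offsets $\eta_k$ and $\epsilon_k+\eta_k$ are below $1.25\epsilon_k<\frac{4}{3}\epsilon_k<S_k$, so each gap of $Q_k$ receives exactly one point of $Q_k+a$. The $q_{k-1}-i_k>q_k/12$ long gaps with left endpoint $j\alpha$, $i_k\le j<q_{k-1}$, then each contribute overage $L_k-\eta_k-2\epsilon_k>\epsilon_k/12$, giving $\beta_k>q_k\epsilon_k/144>1/576$. The only blemishes are the stray claim $S_k>2\epsilon_k$ (false when $a_{k+1}=1$, but you immediately replace it with the correct $\frac{4}{3}\epsilon_k$ bound, which is all that is needed) and the short-gap fallback paragraph, which is unnecessary.
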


\begin{proof}
Assume that for some value of $k$, $i_k/q_k < 0.25$ and $\eta_k/\epsilon_k < 0.25$. Recall that we can write
$Q_k + a = U_k \cup V_k$, where 
\begin{equation}\label{uv}
U_k = \{j \alpha + \eta_k \ : \ i_k \leq j < q_k\} \textrm{ and } V_k = \{j \alpha + (q_k \alpha + \eta_k) \ : \ 0 \leq j < i_k\}.
\end{equation}
Assume without loss of generality that $q_k \alpha$ is positive; then $\eta_k > 0$ and $q_k \alpha = \epsilon_k$. 
$U_k$ consists of a very large number of elements of $Q_k$ shifted by $\eta_k$ to the right, and $V_k$ consists of a very small number of elements of $Q_k$ shifted by $\eta_k + \epsilon_k$ to the right. We note that both of these numbers are smaller than 
$1.25\epsilon_k$, which is smaller than the length $S_k = \epsilon_{k-1}$ of a short gap in $Q_k$ by Lemma~\ref{ratios}. So, each gap in $T_k$ is obtained by splitting a gap in $Q_k$ by the unique point in $Q_k + a$ lying in it. 

Also note that the length of a long gap in $Q_k$ is $L_k = \epsilon_k + \epsilon_{k-1}$. By Lemma~\ref{ratios}, 
$\epsilon_{k-1} > \frac{4}{3} \epsilon_k$, so $L_k > (7/3) \epsilon_k$. Therefore, every long gap split by a point of $U_k$ leaves a gap of length $L_k - \eta_k > (25/12) \epsilon_k$. 

Every $j < q_{k-1}$ for which $i_k + j < q_{k-1}$ corresponds to a point in $U_k$ which is the shift by $\eta_k$ of a left endpoint of a long gap, and so there are at least $q_{k-1} - i_k$ gaps of length at least $(25/12) \epsilon_k$ in $T_k$. This means that $\beta_k > (q_{k-1} - i_k) \epsilon_k/12$.

Finally, note that $q_{k-1} > q_k/3$ by Lemma~\ref{ratios}, so since $i_k < q_k/4$, we see that
$q_{k-1} - i_k > q_k/12$. 
Therefore, by \Cref{qeps},
\[
\beta_k > q_k \epsilon_k/144 > 1/576 > 0.001.
\]

\end{proof}

We now will show that extremely small $i_k$ or extremely small distance from $Q_k$ alone yield large enough $\beta_{k'}$ for some nearby $k'$.

\begin{lemma}\label{smalli}
For any $k$, if $a_{k + j} \leq 2$ for $-8 \leq j \leq 2$ and $i_k/q_k < \frac{1}{10000}$, then $\beta_{k-8} > 0.001$.
\end{lemma}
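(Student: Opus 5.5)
The plan is to transfer the smallness of $i_k$ down to the scale $k-8$, where $i_{k-8}$ and $\eta_{k-8}$ will both be small relative to $q_{k-8}$ and $\epsilon_{k-8}$. Then Lemma~\ref{smallboth}, applied at index $k-8$, gives the result. Its hypotheses $a_{k-8},a_{k-7},a_{k-6}\le 2$ are among ours. The shift $8$ is even, so $q_{k-8}\alpha$ and $q_k\alpha$ have the same sign. This is what lets the decomposition $a=i_k\alpha+\eta_k$ be reused directly at level $k-8$.

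\textbf{Step 1: comparing $q_k$ with $q_{k-8}$.} Using $q_{j+1}=a_{j+1}q_j+q_{j-1}$ with all $a_j\le 2$ in the window, I will bound $q_k/q_{k-8}$. A crude bound is $q_{j+2}\le 2q_{j+1}+q_j\le 5q_j+\ldots$. A cleaner route is the ratio bound $q_{j+2}/q_j\le 7$, which gives $q_k/q_{k-8}\le 7^4=2401$. A sharper growth estimate near $(1+\sqrt2)^8$ is also available but is not needed. Combined with $i_k<q_k/10000$, this yields
\[
i_k< q_{k-8}\cdot 2401/10000< 0.25\,q_{k-8}.
\]
In particular $i_k<q_{k-8}$, so $i_k\alpha\in Q_{k-8}$.

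\textbf{Step 2: controlling $\eta_k$ against $\epsilon_{k-8}$.} Since $\eta_k$ has minimal absolute value, $|\eta_k|$ is at most the length of a gap of $Q_k$, so $|\eta_k|<L_k=\epsilon_{k-1}+\epsilon_k<2\epsilon_{k-1}$ by Lemma~\ref{longshort}. Lemma~\ref{ratios} gives $\epsilon_{j-1}/\epsilon_j>a_{j+1}\ge 1$, and over two steps $\epsilon_{j-2}=a_j\epsilon_{j-1}+\epsilon_j\ge \epsilon_{j-1}+\epsilon_j$. These give geometric decay, roughly $\epsilon_{k-1}\le \epsilon_{k-8}/\varphi^{7}$ with $\varphi$ the golden ratio, or at least $\epsilon_{k-3}\le\epsilon_{k-8}/2^{2}$ from pairing steps. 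Hence $|\eta_k|<0.25\,\epsilon_{k-8}$.

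\textbf{Step 3: identifying $i_{k-8}$ and $\eta_{k-8}$.} The points of $Q_{k-8}$ are separated by gaps of length at least $S_{k-8}=\epsilon_{k-9}>\epsilon_{k-8}$. The point $a=i_k\alpha+\eta_k$ lies at distance less than $0.25\,\epsilon_{k-8}$ from $i_k\alpha\in Q_{k-8}$, on the side given by the sign of $q_k\alpha$, which equals the sign of $q_{k-8}\alpha$. So the representation of $a$ at level $k-8$ with minimal $|\eta|$ of the correct sign is exactly $i_{k-8}=i_k$ and $\eta_{k-8}=\eta_k$. Steps 1 and 2 then give $i_{k-8}/q_{k-8}<0.25$ and $\eta_{k-8}/\epsilon_{k-8}<0.25$, and Lemma~\ref{smallboth} yields $\beta_{k-8}>0.001$.

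The main obstacle is the bookkeeping in Steps 1 and 2. The constant $10000$ must absorb the worst-case growth $q_k/q_{k-8}$ when digits are $\le 2$ (including possible digits equal to $1$). I would first try the two-step bound $q_{j+2}\le 7q_j$ and the two-step decay $\epsilon_{j}\le\epsilon_{j-2}/2$, checking that these comfortably meet the $0.25$ thresholds. Along the way I would verify that the sign convention for $\eta$ genuinely matches across even index shifts.
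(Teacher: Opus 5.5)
Your proposal is correct and follows essentially the same route as the paper. Bound $q_k<7^4q_{k-8}$ to get $i_k<q_{k-8}/4$. Use the even shift, so that $q_{k-8}\alpha$ and $q_k\alpha$ have the same sign, to identify $i_{k-8}=i_k$ and $\eta_{k-8}=\eta_k$. Then bound $\eta_k<\epsilon_{k-1}+\epsilon_k$ against $\epsilon_{k-8}$ by geometric decay, and apply Lemma~\ref{smallboth} at level $k-8$.

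The one difference is minor: you get the decay from $\epsilon_{j-2}\ge\epsilon_{j-1}+\epsilon_j$, which holds for any digits, where the paper uses the per-step ratio $\epsilon_{i-1}>\tfrac43\epsilon_i$. Your $\varphi^7$ remark is only heuristic, since the recursion alone guarantees a factor of about $21$, not $\varphi^7$. Your halving fallback $2\epsilon_{k-1}<\epsilon_{k-3}<\epsilon_{k-8}/4$ does suffice on its own.
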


\begin{proof}

Assume without loss of generality that $q_k \alpha$ is positive, i.e. equal to $\epsilon_k$. Then, we note that
$q_k = a_k q_{k-1} + q_{k-2} = (a_k a_{k-1} + 1) q_{k-2} + a_{k-2} q_{k-3} < 7 q_{k-2}$, and similarly
$q_{k-2} < 7q_{k-4} < 49q_{k-6} < 343q_{k-8}$, so $i_k < q_k/9604 < q_{k-8}/4$. Therefore, since all gaps at level $k-8$ are greater than those at level $k$ (and since $q_{k-8} \alpha$ is positive), $i_{k-8} = i_k$ and $\eta_{k-8} = \eta_k$. Then
\[
\eta_{k-8}/\epsilon_{k-8} = \frac{\eta_k}{\epsilon_{k-8}} < (\epsilon_{k-1} + \epsilon_k)/\epsilon_{k-8}.
\] 
By Lemma~\cref{ratios}, $\epsilon_{i-1} > 4/3 \epsilon_i$ for $k-7 \leq i \leq k$. Therefore, 
\[
\eta_{k-8}/\epsilon_{k-8} < (\epsilon_{k-1} + \epsilon_k)/\epsilon_{k-8} < (3/4)^7 + (3/4)^8 < 1/4.
\]
Since we already showed that $i_{k-8} = i_k < q_{k-8}/4$, we now are finished by Lemma~\ref{smallboth}.

\end{proof}

\begin{lemma}\label{smalldist}
For any $k$, if $a_{k + j} \leq 2$ for $1 \leq j \leq 9$ and $d(a, Q_k) < \frac{\epsilon_k}{10000}$, then 
$\max(\beta_{k+6}, \beta_{k+7}) > 0.001$.
\end{lemma}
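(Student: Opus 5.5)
The plan is to reduce to Lemma~\ref{smallboth} at a later level $k' \in \{k+6, k+7\}$, chosen so that $q_{k'}\alpha$ has the right sign relative to the offset $a - i\alpha$. By hypothesis there is some $0 \le i < q_k$ and some $\delta$ with $|\delta| < \epsilon_k/10000$ such that $a = i\alpha + \delta$. Among $k+6$ and $k+7$, exactly one index $k'$ makes $q_{k'}\alpha$ have the same sign as $\delta$. (If $\delta = 0$ then $a \in \mathbb{Z}\alpha$ and $\beta_{k'}$ is large trivially, so we may take $\delta \neq 0$.) Fix that $k'$. We will show that $i_{k'} = i$ and $\eta_{k'} = \delta$, and then verify the two smallness conditions of Lemma~\ref{smallboth} at level $k'$. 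The digit hypotheses $a_{k'}, a_{k'+1}, a_{k'+2} \le 2$ hold because $k'+2 \le k+9$.

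First, the distance condition. By Lemma~\ref{ratios} with all relevant digits at most $2$, we have $\epsilon_{j-1}/\epsilon_j < 3$ for $k+1 \le j \le k+8$. Iterating gives $\epsilon_{k'} > \epsilon_k/3^{7} > \epsilon_k/2187$. Hence
\[
|\delta|/\epsilon_{k'} < 2187/10000 < 0.25.
\]

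Second, we identify $i_{k'}$ and $\eta_{k'}$. Since $i < q_k \le q_{k'}$, the point $i\alpha$ lies in $Q_{k'}$, and $\delta$ has the sign of $q_{k'}\alpha$. We need $\delta$ to be the minimal admissible offset, that is, no other point of $Q_{k'}$ lies strictly between $i\alpha$ and $a$. This holds because all gaps in $Q_{k'}$ are at least $S_{k'} = \epsilon_{k'-1} > \epsilon_{k'} > |\delta|$. Therefore $i_{k'} = i$ and $\eta_{k'} = \delta$, so $\eta_{k'}/\epsilon_{k'} < 0.25$.

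Third, and this is the main obstacle, we need $i_{k'}/q_{k'} < 0.25$. We have $i < q_k$, and $q_{k'} \ge q_{k+6}$ is much larger than $q_k$. By the recursion $q_{j+1} = a_{j+1}q_j + q_{j-1} \ge q_j + q_{j-1}$, the $q_j$ grow at least like Fibonacci numbers, so $q_{k+6} \ge 13\,q_k$. Thus $i/q_{k'} < 1/13 < 0.25$. Lemma~\ref{smallboth} then gives $\beta_{k'} > 0.001$, which proves the claim.

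The delicate point is the convention for $\eta_{k'}$. It requires $\eta_{k'}$ to have the same sign as $q_{k'}\alpha$ and minimal absolute value, and this is exactly why both candidates $k+6$ and $k+7$ are needed: consecutive $q_j\alpha$ alternate in sign, so one of the two always matches the sign of $\delta$. I would double-check that the gap lower bound ensures $a$ falls in the gap adjacent to $i\alpha$ on the correct side. In the worst case one could use the cruder bound $\epsilon_{k'} > \epsilon_k/3^{7}$ and compare it with $10000$; this still works comfortably.
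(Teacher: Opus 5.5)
Your proposal is correct and matches the paper's proof. You write $a = i\alpha + \delta$ and choose $k' \in \{k+6, k+7\}$ so that $q_{k'}\alpha$ has the sign of $\delta$; the paper splits this into two cases, $\eta>0$ handled at $k+6$ and $\eta<0$ at $k+7$. You then check that $i_{k'} = i$ and $\eta_{k'} = \delta$ with both ratios below $1/4$, and apply Lemma~\ref{smallboth}. The differences are cosmetic: you bound $q_{k+6} \ge 13\,q_k$ by Fibonacci growth where the paper uses $(4/3)^6 q_k > 4q_k$, and you remark on $\delta = 0$, which Lemma~\ref{smallboth} already covers with $\eta_{k'} = 0$.
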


\begin{proof}
By assumption, we can write $a = i \alpha + \eta$ for some $0 \leq i < q_k$ and $\eta$ with $|\eta| < \frac{\epsilon_k}{10000}$. We assume without loss of generality that $q_k \alpha$ is positive and break into cases depending on sign of $\eta$.\\

\noindent
\textbf{Case 1:} $\eta > 0$. Then $i_k = i$ and $\eta_k = \eta$. We now consider the value $k + 6$. 
By Lemma~\ref{ratios}, $\epsilon_{k+6} > \epsilon_k/729$, and so $\eta < \epsilon_k/10000 < \epsilon_{k+6}/4$. This means that 
$i_{k+6} = i$ and $\eta_{k+6} = \eta < \epsilon_{k+6}/4$. Finally, $q_{k+6} \geq (4/3)^6 q_k > 4q_k$, so $i \leq q_k < q_{k+6}/4$. and now we have $\beta_{k+6} > 0.001$ by Lemma~\ref{smallboth}.\\

\noindent
\textbf{Case 2:} $\eta < 0$. Then we look at $k+1$; $q_{k+1} \alpha$ is negative, and so $i_{k+1} = i$ and $\eta_{k+1} = \eta$. 
By Lemma~\ref{ratios}, $\epsilon_{k+7} > \epsilon_k/2187$, and so $\eta < \epsilon_k/10000 < \epsilon_{k+7}/4$. This means that
$i_{k+7} = i$ and $\eta_{k+7} = \eta < \epsilon_{k+7}/4$. Finally, $q_{k+7} \geq (4/3)^7 q_k > 4q_k$, so $i \leq q_k < q_{k+7}/4$ and so $\beta_{k+7} > 0.001$ by Lemma~\ref{smallboth}.

\end{proof}

This yields the following immediate corollary via Proposition~\ref{longint}. 

\begin{corollary}\label{bdaway}
If $\limsup a_k \leq 2$ and either $\liminf i_k/q_k < 0.0001$ or $\liminf d(a, Q_k)/\epsilon_k < 0.0001$, then almost all points of $X^{(I, \alpha)}$ have $3$-block Gordon structure.
\end{corollary}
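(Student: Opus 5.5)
The plan is to combine \Cref{smalli} and \Cref{smalldist} with \Cref{longint}. Since $\limsup a_k \leq 2$, there is $K$ with $a_k \leq 2$ for all $k \geq K$. The hypotheses of both lemmas then hold automatically for every sufficiently large $k$: $\limsup a_k \leq 2$ only controls the tail, but \Cref{smalli} needs $a_{k+j} \leq 2$ for $-8 \leq j \leq 2$, which is true once $k \geq K+8$, and \Cref{smalldist} needs it for $1 \leq j \leq 9$, which is true once $k \geq K$. (The digit $a_k \geq 1$ is automatic, so we only need the upper bound.)

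In the first case, $\liminf i_k/q_k < 0.0001$, so there are infinitely many $k$ with $i_k/q_k < 1/10000$. Discarding finitely many, we may take all of them with $k \geq K+8$. For each such $k$, \Cref{smalli} gives $\beta_{k-8} > 0.001$. The map $k \mapsto k-8$ is injective, so we obtain infinitely many indices $k'$ with $\beta_{k'} > 0.001$, and hence $\limsup \beta_k \geq 0.001 > 0$.

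In the second case, $\liminf d(a,Q_k)/\epsilon_k < 0.0001$, so there are infinitely many $k \geq K$ with $d(a,Q_k) < \epsilon_k/10000$. \Cref{smalldist} gives, for each such $k$, an index $k' \in \{k+6,k+7\}$ with $\beta_{k'} > 0.001$. Each $k'$ arises from at most two values of $k$, so again there are infinitely many such indices and $\limsup \beta_k \geq 0.001$.

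In either case \Cref{longint} applies and yields that almost every point of $X^{(I,\alpha)}$ has a shift with $3$-block Gordon structure, which is the intended meaning of the corollary (up to a shift, as throughout). There is no real obstacle. The only points needing care are that the digit conditions in the lemmas refer to a window around $k$, which the tail assumption covers for large $k$, and that the reindexing $k \mapsto k-8$ or $k \mapsto k+6, k+7$ still produces infinitely many distinct indices.
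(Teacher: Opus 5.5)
Your proposal is correct and follows the paper's route: the paper treats the corollary as an immediate consequence of \Cref{smalli} and \Cref{smalldist} via \Cref{longint}. You supply the routine details the paper omits, namely that the eventual bound $a_k \leq 2$ covers the digit windows, that the reindexed $k$ still give infinitely many indices with $\beta_k > 0.001$, and that the conclusion holds up to a shift.
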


We can now deal with the second case of \Cref{mainthm}.

\begin{theorem}\label{onetwo}
If $\limsup a_k = 2$ and $\liminf a_k = 1$, then almost every $x \in X^{(\alpha, I)}$ has a shift with $3$-block Gordon structure.
\end{theorem}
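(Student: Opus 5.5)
The plan is to reduce to \Cref{longint}: it suffices to find infinitely many $k$ with $\beta_k$ bounded below by an absolute constant. By \Cref{bdaway} we may assume that, for all large $k$, $i_k/q_k \geq 0.0001$ and $d(a,Q_k)/\epsilon_k \geq 0.0001$. So in $T_k$, every point of $Q_k + a$ lies at distance at least $\epsilon_k/10000$ from $Q_k$. Since $\liminf a_k = 1$ and $\limsup a_k = 2$, the digits are eventually in $\{1,2\}$. Both values occur infinitely often, so there are infinitely many $k$ at which a digit $1$ is adjacent to a digit $2$. I will work at such $k$, choosing the pattern so that the ratio $\epsilon_{k-1}/\epsilon_k$ is as favorable as possible.

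The key observation concerns an index $k$ with $a_{k+1}=1$. Then $\epsilon_{k-1} = \epsilon_k + \epsilon_{k+1}$, so by \Cref{ratios} and \Cref{longshort} we get $S_k \in (1.33\epsilon_k, 2\epsilon_k)$ and $L_k \in (2.33\epsilon_k, 3\epsilon_k)$. Each gap of $Q_k$ that contains no point of $Q_k+a$ contributes at least $S_k - 2\epsilon_k$ to $\beta_k$, which may be negative. So instead I will count directly, as in the proof of \Cref{case1} but keeping track of positions. Write $Q_k + a = U_k\cup V_k$ as in \eqref{uv}. This is a translate of $Q_k$ by $\eta_k$ or $\eta_k+\epsilon_k$, where both shifts are at most $L_k$. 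Fix a long gap of $Q_k$, of length $L_k = \epsilon_{k-1}+\epsilon_k$. The points of $U_k$ inside it sit at offset $\eta_k$ from its left endpoint, provided that endpoint has index at least $i_k$. The long gaps are located by \Cref{endpts}: their left endpoints are $j\alpha$ with $j<q_{k-1}$.

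The idea is to split into cases according to where $\eta_k/\epsilon_k$ lies. If $\eta_k$ is close to $0$ or close to $\epsilon_k$, then one of the two pieces of each split long gap has length about $L_k - \eta_k$. Such a piece exceeds $2\epsilon_k$ by a definite amount because of the bound $d(a,Q_k)\ge \epsilon_k/10000$. Summing over roughly $q_{k-1}-i_k$ long gaps (or, in the $V_k$ case, over about $i_k$ of them) then gives a lower bound of order $q_{k-1}\epsilon_k\cdot c$. By \Cref{qeps} and \Cref{ratios}, $q_{k-1}\epsilon_k \gtrsim q_k\epsilon_k/3 > 1/12$, so $\beta_k$ is bounded below. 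If $\eta_k$ is in the middle range, I will move to $k+1$ or $k-1$ and use \Cref{decomp}: a short interval in $Q_k$ is a single long interval of $Q_{k+1}$ when $a_{k+1}=1$. Renormalizing gives $\eta_{k\pm1}/\epsilon_{k\pm1}$ as an explicit affine function of $\eta_k/\epsilon_k$, and I will check that the middle range at level $k$ maps into a good range at the neighboring level. The change from digit $1$ to digit $2$ is what makes this affine map move the parameter off any fixed point.

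The main obstacle is this last bookkeeping step: showing that $\eta_k/\epsilon_k$ and $i_k/q_k$ cannot stay in the bad middle region at every level. This is exactly what fails in the silver-mean case, where all digits are $2$ and a fixed point $f(\alpha)$ exists. I would first try to define the renormalization map $(i_k/q_k,\eta_k/\epsilon_k)\mapsto(i_{k+1}/q_{k+1},\eta_{k+1}/\epsilon_{k+1})$ explicitly for digit $1$ and for digit $2$. I would then show that a bad region invariant under both maps must be empty (or lie within the near-zero regions already handled by \Cref{bdaway}), using that digit $1$ occurs infinitely often.
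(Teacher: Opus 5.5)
\textbf{Verdict: the proposal has a genuine gap.} Your reduction through \Cref{longint} and \Cref{bdaway} matches the paper's. But the proposal stops exactly where the content of the theorem begins. The step you call the main obstacle, namely showing that $a$ cannot sit in a bad position at every large level, is only announced, and the framework you propose for it does not work as stated.

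\emph{The renormalization map is not what you describe.} The normalized quantity $\eta_{k+1}/\epsilon_{k+1}$ is not an affine function of $\eta_k/\epsilon_k$ and the single digit $a_{k+1}$. By \Cref{endpts} and \Cref{decomp} it also depends on whether $a$ lies in a long or a short gap of $Q_k$, i.e.\ on whether $i_k<q_{k-1}$. Its coefficients also involve $\epsilon_k/\epsilon_{k+1}$, which depends on the whole tail of digits. Moreover, the digit sequence is fixed and aperiodic. So what must be excluded is that the one orbit determined by $(a_k)$ stays bad forever. Emptiness of a region invariant under both maps is not the relevant statement.

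\emph{What the paper does instead.} It uses a finite analysis of the long/short itinerary of $a$. $\beta_k$ is bounded below in each of these situations:
\begin{itemize}
\item $a$ is in a long gap at a level $k$ with $a_{k+1}=2$ (Case 1);
\item $a$ is short at $k$ and long at $k+1$ across a $2$-split (Case 2);
\item $a$ follows one of three specific itineraries across runs of $1$-splits (Cases 3--5).
\end{itemize}
Between any two occurrences of the digit pattern $(2,1)$, one of these five cases must occur. Finding these good itineraries and showing every other itinerary is forced into one of them is the missing idea.

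\emph{Holes in the cases you call easy.}
\begin{itemize}
\item You count ``roughly $q_{k-1}-i_k$'' long gaps split near their left endpoint. This gives nothing when $i_k$ is just below $q_{k-1}$, and the count is negative when $a$ lies in a short gap. Neither $i_k/q_k\geq 10^{-4}$ nor $d(a,Q_k)\geq 10^{-4}\epsilon_k$ excludes this. In that configuration, at a $1$-split with $\eta_k<\epsilon_k$, the long gaps are split at offset $\eta_k+\epsilon_k$ and every piece is shorter than $2\epsilon_k$, so $\beta_k$ really is small.
\item The paper obtains $q_{k-1}-i_k>10^{-4}q_k$ by applying the same reductions to $-a$. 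The set $T_k-a=Q_k\cup(Q_k-a)$ has the same gaps as $T_k$, and $-a=(q_{k-1}-i_k)\alpha+(\epsilon_{k-1}-\eta_k)$.
\item The case $\eta_k$ near $\epsilon_k$ is only useful from above. Bounding $\eta_k-\epsilon_k$ away from $0$ needs the distance bound at a later level, as in Case 2, because $(i_k+q_k)\alpha\notin Q_k$.
\item You centre the argument at $1$-splits, where $S_k<2\epsilon_k$ and a second point of $Q_k+a$ in a long gap can destroy the excess. The paper's basic good case is a $2$-split, where $S_k>\frac{7}{3}\epsilon_k$. It handles $1$-splits only through the specific itineraries of Cases 3--5.
\end{itemize}
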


\begin{proof}
By the hypotheses of the theorem, there exists $K$ so that for $k > K$, $1 \leq a_k \leq 2$, and there are infinitely many values of $k$ for which $a_k = 1$ and for which $a_k = 2$. We can also assume (by increasing $K$ if necessary) that
$i_k/q_k, d(a, Q_k)/\epsilon_k \geq 0.0001$ for all $k > K$, since if this is not the case then the proof is complete by 
Proposition~\ref{longint} and Corollary~\ref{bdaway}. We assume for now that $q_k \alpha = \epsilon_k$ is positive without loss of generality.

Our proof relies on breaking into a variety of cases, and showing that each yields some $\beta_n$ bounded away from $0$, and then showing that this must happen infinitely many times. These cases will depend on whether $a$ is part of long or short intervals of length $S_k$ at various values of $k$. We remind the reader that by Lemma~\ref{endpts},
\begin{align*}
i \alpha \textrm{ is the left endpoint of a long interval in } Q_k & \Longleftrightarrow 0 \leq i < q_{k-1}\\
i \alpha \textrm{ is the right endpoint of a long interval in } Q_k & \Longleftrightarrow q_k - q_{k-1} \leq i < q_{k}.
\end{align*}
We refer to a value of $k$ when $a_{k+1} = 2$ as a $2$-split (since $\epsilon_{k-1} = 2\epsilon_{k} + \epsilon_{k+1}$, meaning that a short interval in $Q_k$ splits into two intervals, one long and one short, in $Q_{k-1}$) and a value of $k$ when $a_{k+1} = 1$ as a $1$-split for analogous reasons.\\

\noindent
\textbf{Case 1:} $a$ is part of a long interval for some $k > K$ with $a_{k+1} = 2$ (long gap before a $2$-split)\\

Suppose that this is the case. Then $i_k < q_{k-1}$, and based on the decomposition $Q_k + a = U_k \cup V_k$ from (\ref{uv}),
we see that all points of the form $j\alpha$ for $0 \leq j < i_k$ are shifted to the right by $\eta_k + \epsilon_k$, and all points of the form $j\alpha$ for $i_k \leq j < q_{k}$ are shifted to the right by $\eta_k$. 

Let us first suppose that $\eta_k \in (S_k, L_k) = (\epsilon_{k-1}, \epsilon_{k-1} + \epsilon_k)$. Then, consider all points $j\alpha$ for $q_{k-1} \leq j < (4/3)q_{k-1}$. These are all left endpoints of a short interval to the left of a long interval, meaning that $j\alpha + \eta_k$ will lie inside a long interval, at distance less than $\epsilon_k$ from the left endpoint. This implies that after its partition by points of $Q_k + a$, that long interval will still have a gap of length at least $S_k = \epsilon_{k-1} > 7/3 \epsilon_k$. Since there are $q_{k-1}/3$ such intervals, we see that by \Cref{qeps},
\[
\beta_k > (q_{k-1}/3)(\epsilon_k/3) > q_k \epsilon_k/27 > 1/108.
\]

We now assume that $\eta_k < S_k$. Then since $i_k/q_k > 0.0001$, we know that at least $0.0001q_k$ left endpoints of long gaps are shifted to the right by $\eta_k + \epsilon_k$. If $\eta_k > (7/6) \epsilon_k$, then this yields $0.0001q_k$ long gaps which retain an interval of length at least $(13/6) \epsilon_k$, which implies by \Cref{qeps} that
\[
\beta_k > (q_k/10000)(\epsilon_k/6) > q_k \epsilon_k/60000 > 1/240000.
\]
Before treating the final case $\eta_k \leq (7/6) \epsilon_k$, we note that $i_k$ cannot be too close to $q_{k-1}$: we can write
\[
-a = -i_k \alpha - \eta_k = (q_{k-1} - i_k) \alpha + (-q_{k-1} \alpha - \eta_k) = (q_{k-1} - i_k)\alpha + (\epsilon_{k-1} - \eta_k),
\]
corresponding to a decomposition of $-a$ as $i'_k \alpha + \eta'_k$ for $i'_k = q_{k-1} - i_k$ and 
$\eta'_k = \epsilon_{k-1} - \eta_k$. Since $Q_k \cup (Q_k - a) = T_k - a$ has exactly the same gaps as $T_k$, we may assume without loss of generality that $i'_k/q_k > 0.0001$ just as we did for $i_k$. In other words, we know that $i_k = q_{k-1} - i'_k < q_{k-1} - 0.0001q_k$. 

Now we treat the final case $\eta_k \leq (7/6) \epsilon_k$. By the bound just proved, there are at least $0.0001 q_k$ long intervals whose left endpoint is shifted to the right by $\eta_k$ in $Q_k + a$. Every such interval then retains a gap of length at least $L_k - (7/6)\epsilon_k > (13/6) \epsilon_k$ in $T_k$, which again implies by \Cref{qeps} that
\[
\beta_k > (q_k/10000)(\epsilon_k/6) > q_k \epsilon_k/60000 > 1/240000.
\]
We have then shown that in Case 1, $\beta_k > 1/240000$. 
\begin{flushright}
    $\square$\\
\end{flushright}

\noindent
\textbf{Case 2:} $a$ is part of a short interval for some $k > K$ with $a_{k+1} = 2$ and a long interval for $k+1$ 
(short gap before a $2$-split and long gap after)\\

\begin{figure}[h]
\centering
\begin{tikzpicture}
\draw[thick] (0,0) -- (15, 0);
\filldraw[black] (0,0) circle (3pt);
\filldraw[black] (15,0) circle (3pt);
\filldraw[red] (9,0) circle (5pt);
\draw[<->] (0, -.25) -- (9, -.25);
\node at (9,-0.5){a};
\node at (4.5,-0.5){$\eta_k$};
\node at (7.5,0.5){S};
\draw[thick] (0,-1.5) -- (15, -1.5);
\filldraw[black] (0,-1.5) circle (3pt);
\filldraw[black] (15,-1.5) circle (3pt);
\filldraw[black] (6,-1.5) circle (3pt);
\filldraw[red] (9,-1.5) circle (5pt);
\draw[<->] (0, -1.75) -- (6, -1.75);
\node at (3,-2){$\epsilon_k$};
\node at (9,-2){a};
\node at (3,-1){S};
\node at (10.5,-1){L};
\end{tikzpicture}
\caption{Case 2}\label{fig2}
\end{figure}

Suppose that this is the case. Since $a_{k+1} = 2$, the short interval $a$ lies in at step $k$ becomes a short interval to the left of a long interval in step $k+1$, and since $a$ is in the long interval there, $\eta_k$ must be greater than the length of a short interval at step $k+1$, which is $\epsilon_k$. In fact, then $\eta_k > \epsilon_k + 0.0001\epsilon_{k+1}$, by our assumption that $d(a, Q_{k+1})/\epsilon_{k+1} \geq 0.0001$. 

In addition, by Lemma~\ref{endpts}, $i_k \geq q_{k-1}$. This means that all points of the form $j\alpha + \eta_k + \epsilon_k$ for $0 \leq j < q_{k-1}$ are in $T_k$, meaning in turn that all long intervals in $Q_k$ are split by a point at least $\eta_k + \epsilon_k > 2\epsilon_k + 0.0001\epsilon_{k+1}$ from the left endpoint. Then by \Cref{qeps}, in Case 2,
\[
\beta_k > q_{k-1}(\epsilon_{k+1}/10000) > q_{k+1} \epsilon_{k+1}/90000 > 1/360000.
\]
\begin{flushright}
    $\square$\\
\end{flushright}

By Cases 1 and 2, we can assume WLOG that for $k > K$, $a$ is in a short gap before AND after every $2$-split.
(Otherwise the proof would be complete by \Cref{longint}.)
Consider any $k$ for which $a_{k+1} = 2$ and $a_{k+2} = 1$ (there are infinitely many of these). Then we know that $a$ was in a short interval at levels $k$ and $k+1$. The short interval from level $k+1$ becomes a single long interval at level $k+2$ since $k+1$ is a $1$-split. This means that $k+2$ cannot be a $2$-split, else we are in Case 1. Therefore, 
$a_{k+3} = 1$. \\

\noindent
\textbf{Case 3:} $a$ is part of a short interval at step $k+3$, $a_{k+1} = 2$, $a_{k+2} = a_{k+3} = 1$ 
(short, then short, then long, then short, for a $2$-split followed by two $1$-splits.)\\

\begin{figure}[h]
\centering
\begin{tikzpicture}
\draw[thick] (0,0) -- (15, 0);
\filldraw[black] (0,0) circle (3pt);
\filldraw[black] (15,0) circle (3pt);
\filldraw[red] (1.5,0) circle (5pt);
\draw[<->] (0, -.25) -- (1.5, -.25);
\node at (1.5,-0.5){a};
\node at (0.75,-0.5){$\eta_k$};
\node at (7.5,0.5){S};
\draw[thick] (0,-1.5) -- (15, -1.5);
\filldraw[black] (0,-1.5) circle (3pt);
\filldraw[black] (15,-1.5) circle (3pt);
\filldraw[black] (45/8,-1.5) circle (3pt);
\filldraw[red] (1.5,-1.5) circle (5pt);
\node at (1.5,-2){a};
\node at (45/16,-1){S};
\node at (165/16,-1){L};
\draw[thick] (0,-3) -- (15, -3);
\filldraw[black] (0,-3) circle (3pt);
\filldraw[black] (15,-3) circle (3pt);
\filldraw[black] (45/8,-3) circle (3pt);
\filldraw[black] (45/4,-3) circle (3pt);
\filldraw[red] (1.5,-3) circle (5pt);
\node at (1.5,-3.5){a};
\node at (45/16,-2.5){L};
\node at (135/16,-2.5){L};
\node at (105/8,-2.5){S};
\draw[thick] (0,-4.5) -- (15, -4.5);
\filldraw[black] (0,-4.5) circle (3pt);
\filldraw[black] (15,-4.5) circle (3pt);
\filldraw[black] (7.5,-4.5) circle (3pt);
\filldraw[black] (15/8,-4.5) circle (3pt);
\filldraw[black] (45/8,-4.5) circle (3pt);
\filldraw[black] (45/4,-4.5) circle (3pt);
\filldraw[red] (1.5,-4.5) circle (5pt);
\draw[<->] (0, -4.75) -- (15/8, -4.75);
\node at (0.75,-5){$\epsilon_{k+2}$};
\node at (1.5,-5){a};
\node at (15/16,-4){S};
\node at (30/8,-4){L};
\node at (105/16,-4){S};
\node at (75/8,-4){L};
\node at (105/8,-4){L};
\end{tikzpicture}
\caption{Case 3}\label{fig3}
\end{figure}

The short interval $a$ lies in at step $k+3$ shares a left endpoint with the short interval from step $k$, and so $\eta_k$ is less than the length of a short interval at step $k+3$, which is $\epsilon_{k+2}$. Then as in Case 2, all left endpoints of long intervals at step $k$ get shifted to the right by $\eta_k + \epsilon_k$ in $Q_k + a$. This shift has length less than $\epsilon_k + \epsilon_{k+2}$. We note that by the standard recursions, the length of a long gap at level $k$ is
$\epsilon_{k-1} + \epsilon_k = 3\epsilon_k + \epsilon_{k+1} = 3\epsilon_k + \epsilon_{k+2} + \epsilon_{k+3}$. 

Therefore, all long gaps at level $k$ retain a gap of length $L_k - (\epsilon_k + \epsilon_{k+2}) = 2\epsilon_k + \epsilon_{k+3}$ in $T_k$. Therefore, by \Cref{qeps}, in Case 3 we have
\[
\beta_k > q_{k-1}\epsilon_{k+3} > q_k \epsilon_k/81 > 1/324.
\]
\begin{flushright}
    $\square$\\
\end{flushright}

The proof would be complete if Case 3 happened infinitely often, and so we can assume that for sufficiently large $k$, 
$a$ is part of a long interval at step $k+3$, implying that $a_{k+4} = 1$ (since we've assumed Case 1 does not occur for large $k$). We can then consider the position of $a$ starting from step $k+1$.\\

\noindent
\textbf{Case 4:} $a$ is part of a short interval at step $k+4$ and $a_{k+2} = a_{k+3} = a_{k+4} = 1$ 
(short, then long, then long, then short, for three $1$-splits)\\

\begin{figure}[h]
\centering
\begin{tikzpicture}
\draw[thick] (0,0) -- (15, 0);
\filldraw[black] (0,0) circle (3pt);
\filldraw[black] (15,0) circle (3pt);
\filldraw[red] (13,0) circle (5pt);
\node at (13,-0.5){a};
\node at (7.5,0.5){S};
\draw[thick] (0,-1.5) -- (15, -1.5);
\filldraw[black] (0,-1.5) circle (3pt);
\filldraw[black] (15,-1.5) circle (3pt);
\filldraw[red] (13,-1.5) circle (5pt);
\node at (13,-2){a};
\node at (7.5,-1){L};
\draw[thick] (0,-3) -- (15, -3);
\filldraw[black] (0,-3) circle (3pt);
\filldraw[black] (15,-3) circle (3pt);
\filldraw[black] (6,-3) circle (3pt);
\filldraw[red] (13,-3) circle (5pt);
\node at (13,-3.5){a};
\node at (3,-2.5){S};
\node at (10.5,-2.5){L};
\draw[thick] (0,-4.5) -- (15, -4.5);
\filldraw[black] (0,-4.5) circle (3pt);
\filldraw[black] (15,-4.5) circle (3pt);
\filldraw[black] (6,-4.5) circle (3pt);
\filldraw[black] (12,-4.5) circle (3pt);
\filldraw[red] (13,-4.5) circle (5pt);
\draw[<->] (12, -4.75) -- (15, -4.75);
\node at (13.5,-5){$\epsilon_{k+3}$};
\node at (3,-4){L};
\node at (9,-4){L};
\node at (13.5,-4){S};
\end{tikzpicture}
\caption{Case 4}\label{fig4}
\end{figure}

We will end up showing that $\beta_{k+2}$ is bounded away from $0$ in this case, but first need to point out a new subtlety that arises since $a_{k+2} = 1$ (i.e. since we are making an argument at a value of $k$ corresponding to a $1$-split). Previously, as long as we could show that enough long gaps in $Q_k$ contained a point of $Q_k + a$ close to an endpoint, we were finished, because even though a second point of $Q_k + a$ could theoretically lie in that long interval, $S_k = \epsilon_{k-1} > (7/3) \epsilon_k$ was bounded away from $2\epsilon_k$, and so we were guaranteed a long enough interval in $T_k$. Now, $S_k < 2\epsilon_k$, so we need to take more care with this issue. 

In the situation of Case 4, $a$ is within distance $S_{k+4} = \epsilon_{k+3}$ of the right endpoint of the long interval $a$ lies in at level $k+2$. This means that we can, at level $k+2$, write $a = i\alpha + \eta$ for $-\epsilon_{k+3} < \eta < 0$. (This is not the standard representation of $a$ at level $k+2$, since $q_{k+2} \alpha$ is positive and $\eta$ is negative. But we will use it for this case only.) In fact we can say more; by assumption, $a$ is distance at least $0.0001\epsilon_{k+4}$ from all points in 
$Q_{k+4}$, and so $\eta > 0.0001\epsilon_{k+4} - \epsilon_{k+3}$. 

Since $a$ is near an endpoint of a long interval at level $k+2$ which was already in $Q_{k+1}$, we know that $i < q_{k+1}$. Using a similar argument with negating $a$ as in Case 1, we see that
\[
-a = -i\alpha - \eta = (q_{k+1} - i)\alpha + (-q_{k+1}\alpha - \eta) = (q_{k+1} - i)\alpha + (\epsilon_{k+1} - \eta).
\]
Since we assumed that $-a$ does not have a representation with infinitely many $i_k/q_k < 0.0001$, as before we know 
that $i < q_{k+1} - 0.0001 q_{k+2}$. 

Now, consider any point in $Q_k$ of the form $j \alpha$ with $q_{k+2} - q_{k+1} < j < 1.0001q_{k+2} - q_{k+1}$. This is the right endpoint of a long interval, and after adding $a$, it is $(j + i)\alpha + \eta$, which is a right endpoint of a long interval shifted to the left by $|\eta| < \epsilon_{k+3} - 0.0001\epsilon_{k+4}$. 

This means that it retains an interval of length at least 
\[
L_{k+2} - (\epsilon_{k+3} - 0.0001\epsilon_{k+4}) = \epsilon_{k+1} + \epsilon_{k+2} - \epsilon_{k+3} + 0.0001\epsilon_{k+4}
= 2\epsilon_{k+2} + 0.0001\epsilon_{k+4}.
\]

Since there were $0.0001q_{k+2}$ such intervals, by \Cref{qeps},
\[
\beta_{k+2} > (\epsilon_{k+4}/10000)(q_{k+2}/10000) > \epsilon_{k+2}q_{k+2}/400000000 > 1/1600000000
\]
in Case 4.
\begin{flushright}
    $\square$\\
\end{flushright}

The proof would again be complete if Case 4 occurred infinitely often, so for large $k$ we may assume that $a$ lies in a long interval at levels $k+2$, $k+3$, and $k+4$, and $a_{k+1}, a_{k+2}, a_{k+3} = 1$. Since $\limsup a_k = 2$, there must exist some $k' > k+2$ which is a $2$-split, implying that $a$ lies in a short interval at level $k'$ by Case 1. We may then take $k'$ to be the minimal value greater than $k+2$ where $a$ lies in a short interval, meaning that $a_{k'-2}, a_{k'-1}, a_{k'} = 1$ (and $a_{k'+1} = 2$).\\

\noindent
\textbf{Case 5:} $a$ is part of a long interval at levels $k' - 3$, $k' - 2$, $k' - 1$, and a short interval at level $k$
(long, then long, then long, then short, for three $1$-splits).\\

\begin{figure}[h]
\centering
\begin{tikzpicture}
\draw[thick] (0,0) -- (15, 0);
\filldraw[black] (0,0) circle (3pt);
\filldraw[black] (15,0) circle (3pt);
\filldraw[red] (7,0) circle (5pt);
\node at (7,-0.5){a};
\node at (7.5,0.5){L};
\draw[<->] (0, -.25) -- (7, -.25);
\node at (3.5,-0.5){$\eta_{k'-3}$};
\draw[thick] (0,-1.5) -- (15, -1.5);
\filldraw[black] (0,-1.5) circle (3pt);
\filldraw[black] (15,-1.5) circle (3pt);
\filldraw[black] (75/13,-1.5) circle (3pt);
\filldraw[red] (7,-1.5) circle (5pt);
\draw[<->] (0, -1.75) -- (75/13, -1.75);
\node at (75/26,-2){$\epsilon_{k'-3}$};
\node at (7,-2){a};
\node at (75/26,-1){S};
\node at (135/13,-1){L};
\draw[thick] (0,-3) -- (15, -3);
\filldraw[black] (0,-3) circle (3pt);
\filldraw[black] (15,-3) circle (3pt);
\filldraw[black] (75/13,-3) circle (3pt);
\filldraw[black] (150/13,-3) circle (3pt);
\filldraw[red] (7,-3) circle (5pt);
\node at (7,-3.5){a};
\node at (75/26,-2.5){L};
\node at (225/26,-2.5){L};
\node at (172.5/13,-2.5){S};
\draw[thick] (0,-4.5) -- (15, -4.5);
\filldraw[black] (0,-4.5) circle (3pt);
\filldraw[black] (15,-4.5) circle (3pt);
\filldraw[black] (75/13,-4.5) circle (3pt);
\filldraw[black] (150/13,-4.5) circle (3pt);
\filldraw[black] (30/13,-4.5) circle (3pt);
\filldraw[black] (105/13,-4.5) circle (3pt);
\filldraw[red] (7,-4.5) circle (5pt);
\node at (7,-5){a};
\node at (15/13,-4){S};
\node at (105/26,-4){L};
\node at (90/13,-4){S};
\node at (255/26,-4){L};
\node at (172.5/13,-4){L};
\end{tikzpicture}
\caption{Case 5}\label{fig5}
\end{figure}

Since we know nothing about the parity of $k' - k$, here we cannot assume anything about the sign of $q_{k' - 3} \alpha$ without loss of generality. We still begin with the case where it is positive and then describe required changes in the opposite case.

Since $a$ lies in long intervals at levels $k'-3$, $k'-2$, $k'-1$, and short at $k'$, the short interval it lies in at level $k'$ shares left endpoint with the long interval from level $k'-2$. In addition, recall that $d(a, Q_{k'-2}) > 0.0001 \epsilon_{k'-2}$. Therefore, $\eta_{k'-3} \in (\epsilon_{k'-3} + 0.0001 \epsilon_{k'-2}, \epsilon_{k'-3} + \epsilon{k'-1})$. Since $a$ lies in a long gap at level $k'-3$, by Lemma~\ref{endpts} $i_{k'-3} < q_{k'-4}$. 


Now, consider any point in $Q_k$ of the form $j \alpha$ with $0.9999 q_{k'-3} < j < q_{k'-3}$. This is the right endpoint of a long interval, and after adding $a$, it is $(j + i_{k'-3} - q_{k'-3})\alpha + (\eta_{k'-3} + \epsilon_{k'-3})$, which is a left endpoint of a long interval shifted to the right by 
\[
\eta_{k'-3} + \epsilon_{k'-3} \in (2 \epsilon_{k'-3} + 0.0001\epsilon_{k'-2}, 2\epsilon_{k'-3} + \epsilon_{k'-1}).
\] 
This means that the corresponding long interval retains an interval of length at least $2\epsilon{k'-3} + 0.0001 \epsilon_{k'-2}$ in $T_k$. Since there are $0.0001 q_{k'-3}$ such intervals,
\[
\beta_{k'-3} > (\epsilon_{k'-2}/10000)(q_{k'-3}/10000) > \epsilon_{k'-3}q_{k'-3}/200000000 > 1/800000000.
\]

If instead $q_{k'-3} \alpha$ were negative, all left and right endpoints in the proof are swapped, and the sign of $\eta_{k'-3}$ will be negative, but nothing else needs to change. Therefore, in Case 5, $\beta_{k'-3} > 1/800000000$.
\begin{flushright}
    $\square$\\
\end{flushright}

Between any $k_1, k_2$ for which $a_{k_1} = a_{k_2} = 2$ and $a_{k_1 + 1} = a_{k_2 + 1} = 1$, one of Cases 1-5 must occur, and so there is a value of $k$ for which $\beta_k > 10^{-9}$. Since there are infinitely many such $k$, by Proposition~\ref{longint}, the proof is complete.

\end{proof}

The third and fourth cases of \Cref{mainthm} now follow from an analysis of the possibilities for membership of $a$ in long/short intervals not already treated in \Cref{onetwo}. We first need a simple observation.

\begin{lemma}\label{parity}
If $\lim a_k = 2$, then the parities of $p_k + p_{k+1}$ and $q_k + q_{k+1}$ are eventually constant and they cannot both be eventually even.
\end{lemma}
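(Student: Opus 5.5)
Since $\lim a_k = 2$ and the $a_k$ are integers, there is $K$ with $a_{k+1} = 2$ for all $k \geq K$. The convergent recursions then become $p_{k+1} = 2p_k + p_{k-1}$ and $q_{k+1} = 2q_k + q_{k-1}$ for $k \geq K$. Reducing mod $2$ gives $p_{k+1} \equiv p_{k-1}$ and $q_{k+1} \equiv q_{k-1} \pmod 2$. So for $k \geq K$ the pair of parities $(p_k \bmod 2, p_{k+1} \bmod 2)$ alternates between $(x,y)$ and $(y,x)$. Hence $p_k + p_{k+1} \equiv x + y \pmod 2$ is eventually constant, and likewise $q_k + q_{k+1}$. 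This proves the first assertion.

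For the second assertion, I would use the determinant identity $p_{k+1}q_k - p_k q_{k+1} = \pm 1$, which is standard and follows by induction from the same recursion. Suppose, for contradiction, that both $p_k + p_{k+1}$ and $q_k + q_{k+1}$ are even for some $k$. Then $p_{k+1} \equiv p_k$ and $q_{k+1} \equiv q_k \pmod 2$. Substituting into the determinant gives
\[
p_{k+1}q_k - p_k q_{k+1} \equiv p_k q_k - p_k q_k = 0 \pmod 2,
\]
which contradicts its value $\pm 1$. So the two sums are never simultaneously even, and in particular not both eventually even.

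There is essentially no obstacle here. The only point needing care is that "eventually" is measured from the index $K$ after which $a_k = 2$, so the parity recursion only holds from there on. The second part does not even need the limit hypothesis.
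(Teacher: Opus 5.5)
Your proof is correct. The first half matches the paper's argument: the paper writes $q_{k+1}-q_k=(q_k-q_{k-1})+2q_{k-1}$ (and likewise for $p$), which is your observation $q_{k+1}\equiv q_{k-1} \pmod 2$ once $a_{k+1}=2$.

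The second half takes a genuinely different route. The paper argues in three steps:
\begin{itemize}
\item if both sums were eventually even, then $p_k$ and $q_k$ would eventually both be odd, citing that $p_k/q_k$ is in lowest terms;
\item it then takes the largest index $m$ at which $p_m$ and $q_m$ have opposite parities, which exists because of the initial values;
\item the recursions $p_{m+2}=a_{m+2}p_{m+1}+p_m$ and $q_{m+2}=a_{m+2}q_{m+1}+q_m$, with $p_{m+1},p_{m+2},q_{m+1},q_{m+2}$ all odd, force $p_m\equiv q_m\pmod 2$, a contradiction.
\end{itemize}

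You instead reduce $p_{k+1}q_k-p_kq_{k+1}=\pm 1$ modulo $2$ at a single index. This gives a stronger conclusion: at no index $k$ are both sums even, for any irrational $\alpha$, with no use of $\lim a_k=2$.

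It is also more complete. Coprimality of $p_k$ and $q_k$ by itself only rules out $p_k,q_k$ both eventually even. It does not rule out $p_k$ eventually even with $q_k$ eventually odd, or the reverse. Excluding those cases needs coprimality of consecutive numerators (or denominators). That is essentially the determinant identity you invoke, or a backward induction to the initial values. The paper's version avoids citing the identity explicitly, but only by leaving that step implicit.
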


\begin{proof}
Since $a_k$ is eventually $2$, for sufficiently large $k$ we have 
\[
p_{k+1} = 2p_k + p_{k-1} \textrm{ and } q_{k+1} = 2q_k + q_{k-1}.
\]
Therefore, 
\[
q_{k+1} - q_k = q_k + q_{k-1} = (q_k - q_{k-1}) + 2q_{k-1} \textrm{ and } p_{k+1} - p_k = p_k + p_{k-1} = (p_k - p_{k-1}) + 2p_{k-1},
\]
proving claimed eventual constancy of these parities. If both were eventually even, then (recalling that $p_k/q_k$ is always in lowest terms), $p_k$ and $q_k$ would have to eventually be odd. Since $p_{-1} = 0$ and $q_{-1} = 1$, there must be a maximal value $m$ for which $p_m$ and $q_m$ have opposite parities. We now have a contradiction, since the recursions
\[
p_{m+2} = a_{m+2} p_{m+1} + p_m \textrm{ and } q_{m+2} = a_{m+2} q_{m+1} + q_m
\]
cannot be satisfied by odd values of $p_{m+1}, p_{m+2}, q_{m+1}, q_{m+2}$ and $p_m$ and $q_m$ of opposite parity.

\end{proof}

\begin{definition}\label{fdef}
For any $\alpha$ with $\lim a_k = 2$, define $f(\alpha)$ to be the eventually constant value of
\[
\frac{(p_k + p_{k+1}) \pmod 2 + \alpha ((q_k + q_{k+1}) \pmod 2)}{2}.
\]
\end{definition}
We note that by \Cref{parity}, $f(\alpha)$ is either $\frac{\alpha}{2}, \frac{1}{2}$, or $\frac{\alpha+1}{2}$. 

\begin{theorem}\label{case3}
If $\lim a_k = 2$ and $a \notin \mathbb{Z}\alpha + f(\alpha)$, then almost every $x \in X^{(\alpha, I)}$ has a shift with $3$-block Gordon structure.
\end{theorem}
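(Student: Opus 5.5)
We argue by contradiction through the case analysis already set up, reusing Proposition~\ref{longint}. Fix $K$ with $a_k = 2$ for all $k > K$. We may assume $\beta_k \to 0$, since otherwise we are done. By Corollary~\ref{bdaway} (applied to both $a$ and $-a$, as in Case 1 of Theorem~\ref{onetwo}), we may also assume $i_k/q_k \geq 0.0001$ and $d(a,Q_k)/\epsilon_k \geq 0.0001$ for all $k > K$. Since every $k > K$ is a $2$-split, Case 1 of Theorem~\ref{onetwo} applies verbatim and forces $a$ to lie in a \emph{short} gap of $Q_k$ for every $k > K$. Case 2 never arises, for the same reason. So the task reduces to understanding which $a$ lie in a short gap at every sufficiently large level, and to showing that these $a$ are exactly the ones in $\mathbb{Z}\alpha + f(\alpha)$.

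The next step is to track the nested short gaps. By Theorem~\ref{decomp} with $a_{k+1} = 2$, a short gap at level $k$ splits into one short and one long gap at level $k+1$. The short piece sits at the left end when $q_k\alpha$ is positive and at the right end when it is negative, and this orientation alternates with $k$. Hence the short gap containing $a$ at level $k+1$ is determined by the one at level $k$, and the sequence of short gaps $J_k \ni a$ shrinks to a single point. That point is eventually determined by the position of $J_{K+1}$ alone.

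We then compute it explicitly. Write $J_k = [x_k, x_k + \epsilon_{k-1}]$ (or its mirror image), with $x_k = j_k\alpha$. Passing from $J_k$ to $J_{k+1}$ alternately keeps the same endpoint or moves it by $\pm\epsilon_k$, which corresponds to adding $\pm q_k$ to the index. Thus
\[
a = \lim x_k = j\alpha + \sum_{k} \pm q_k\alpha \pmod 1,
\]
with the signs determined by the alternation. Using $\epsilon_{k-1} = 2\epsilon_k + \epsilon_{k+1}$, I expect this telescoping series to sum to half of a tail quantity $\epsilon_{k-1}+\epsilon_k$, giving a limit of the form $j'\alpha + \tfrac12\big((q_k+q_{k+1})\alpha - (p_k+p_{k+1})\big)$ modulo $\mathbb{Z}\alpha + \mathbb{Z}$. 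By Lemma~\ref{parity} and Definition~\ref{fdef}, this lies in $\mathbb{Z}\alpha + f(\alpha)$: the half-integer parts of $(q_k+q_{k+1})\alpha/2$ and $(p_k+p_{k+1})/2$ reduce modulo $\mathbb{Z}\alpha+\mathbb{Z}$ exactly to $f(\alpha)$. This contradicts the hypothesis $a \notin \mathbb{Z}\alpha + f(\alpha)$, so $\limsup\beta_k > 0$ and Proposition~\ref{longint} finishes the proof.

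The main obstacle is this computation: identifying the point selected by "always in the short gap" and verifying that it equals $\tfrac12(\epsilon_{k-1}+\epsilon_k)$-type quantities expressed as $\tfrac12(q\alpha - p)$. The signs and orientations need care, and the result must be reduced correctly modulo $\mathbb{Z}\alpha$ rather than merely modulo $1$. I would first check the silver mean by hand, where $\epsilon_{k-1}+\epsilon_k$ scales by $\sqrt2-1$, to pin down the constant. A secondary subtlety is that $\mathbb{Z}\alpha$ and $\mathbb{Z}$ are not both absorbed, since $a$ is a length. Handling this may require using the symmetry $a \leftrightarrow 1-a$ (the complement interval), which would explain why the exceptional set is stated as $\mathbb{Z}\alpha + f(\alpha)$.
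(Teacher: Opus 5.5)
Your proposal follows the paper's proof: Case~1 of Theorem~\ref{onetwo} forces $a$ into a short gap at every large level, the nested short gaps with alternating orientation pin $a$ down as $i_K\alpha \pm (\epsilon_K - \epsilon_{K+1} + \epsilon_{K+2} - \cdots)$, and this sum equals $(\epsilon_K+\epsilon_{K+1})/2$, which on $\mathbb{T}$ lies in $\mathbb{Z}\alpha + f(\alpha)$ by Lemma~\ref{parity} and Definition~\ref{fdef}. The computation you left as ``expected'' does go through directly from the recursion. Pairing terms gives both $S=\sum_{i>K}\epsilon_i$ and $S=\epsilon_K-\sum_{i>K+1}\epsilon_i$, so $2S=\epsilon_K+\epsilon_{K+1}$; the paper instead uses the eventual ratio $\epsilon_{k+1}/\epsilon_k=\sigma$. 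Your worry about $\mathbb{Z}$ is moot, since $a$ lives on $\mathbb{T}$ and $-f(\alpha)\equiv f(\alpha)$ modulo $\mathbb{Z}\alpha+\mathbb{Z}$.
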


\begin{proof}
By Case 1 of the proof of Theorem~\ref{onetwo}, $X^{(\alpha, I)}$ has almost-sure absence of eigenvalues if there exist infinitely many values of $k$ for which $a$ lies in a long interval at level $k$. We therefore must only show that if $a$ eventually lies in short intervals, then $a \in \mathbb{Z}\alpha + f(\alpha)$. 

Suppose that there exists $K$ so that for $k > K$, $a_{k+1} = 2$ and $a$ lies in a short interval at level $k$. Then, $\alpha$ is 
M\"{o}bius equivalent to the silver mean $\sigma = \sqrt{2} - 1$, i.e. there exist $p,q,r,s \in \mathbb{Z}$ for which $\alpha = \frac{p \sigma + q}{r \sigma + s}$ and $|ps - qr| = 1$. Then, if we assume WLOG that $q_K \alpha$ is positive, and if $a = i_K \alpha + \eta_K$ as usual, then since $S_k = \epsilon_{k-1}$, we can write (see Figure~\ref{fig6})
\[
a = i_K \alpha + \epsilon_K - \epsilon_{K+1} + \epsilon_{K+2} - \epsilon_{K+3} + \cdots
\]

\begin{figure}[h]
\centering
\begin{tikzpicture}
\draw[thick] (0,0) -- (15, 0);
\filldraw[black] (0,0) circle (3pt);
\filldraw[black] (15,0) circle (3pt);
\filldraw[red] (4,0) circle (5pt);
\node at (0, -0.5){$i_K \alpha$};
\node at (4,-0.5){a};
\node at (7.5,0.5){S};
\draw[thick] (0,-1.5) -- (15, -1.5);
\filldraw[black] (0,-1.5) circle (3pt);
\filldraw[black] (15,-1.5) circle (3pt);
\filldraw[black] (180/29,-1.5) circle (3pt);
\filldraw[red] (4,-1.5) circle (5pt);
\draw[->] (0, -1.75) -- (180/29, -1.75);
\node at (90/29,-2){$\epsilon_{K}$};
\node at (4,-2){a};
\node at (90/29,-1){S};
\node at (625/58,-1){L};
\draw[thick] (0,-3) -- (15, -3);
\filldraw[black] (0,-3) circle (3pt);
\filldraw[black] (15,-3) circle (3pt);
\filldraw[black] (180/29,-3) circle (3pt);
\filldraw[black] (360/29,-3) circle (3pt);
\filldraw[black] (105/29,-3) circle (3pt);
\filldraw[black] (285/29,-3) circle (3pt);
\filldraw[red] (4,-3) circle (5pt);
\draw[<-] (105/29, -3.25) -- (180/29, -3.25);
\node at (285/58,-3.5){$\epsilon_{K+1}$};
\node at (105/58,-2.5){L};
\node at (285/58,-2.5){S};
\node at (465/58,-2.5){L};
\node at (645/58,-2.5){S};
\node at (795/58,-2.5){S};
\draw[thick] (0,-4.5) -- (15, -4.5);
\filldraw[black] (0,-4.5) circle (3pt);
\filldraw[black] (15,-4.5) circle (3pt);
\filldraw[black] (180/29,-4.5) circle (3pt);
\filldraw[black] (360/29,-4.5) circle (3pt);
\filldraw[black] (105/29,-4.5) circle (3pt);
\filldraw[black] (285/29,-4.5) circle (3pt);
\filldraw[black] (30/29,-4.5) circle (3pt);
\filldraw[black] (210/29,-4.5) circle (3pt);
\filldraw[black] (390/29,-4.5) circle (3pt);
\filldraw[black] (135/29,-4.5) circle (3pt);
\filldraw[black] (315/29,-4.5) circle (3pt);
\filldraw[black] (60/29,-4.5) circle (3pt);
\filldraw[black] (240/29,-4.5) circle (3pt);
\filldraw[red] (4,-4.5) circle (5pt);
\draw[->] (105/29, -4.75) -- (135/29, -4.75);
\node at (120/29,-5){$\epsilon_{K+2}$};
\node at (15/29,-4){S};
\node at (45/29,-4){S};
\node at (82.5/29,-4){L};
\node at (120/29,-4){S};
\node at (315/58,-4){L};
\node at (195/29,-4){S};
\node at (225/29,-4){S};
\node at (525/58,-4){L};
\node at (300/29,-4){S};
\node at (675/58,-4){L};
\node at (375/29,-4){S};
\node at (412.5/29,-4){L};
\end{tikzpicture}
\caption{$a$ lying eventually in short intervals for $2$-splits}\label{fig6}
\end{figure}
It remains to examine the infinite series $\sum_{i=K}^{\infty} (-1)^i \epsilon_i$.
Note that 
\[
s\alpha = \frac{ps\sigma + qs}{r \sigma + s} = q \pm \frac{\sigma}{r\sigma + s} \textrm { and }
r\alpha = \frac{pr\sigma + qr}{r \sigma + s} = p \pm \frac{1}{r \sigma + s}.
\]
This means that two consecutive values $\epsilon_j$ and $\epsilon_{j+1}$ have ratio $\sigma$. Since 
$\epsilon_k = 2 \epsilon_{k+1} + \epsilon_{k+2}$ for all $k > K$, this ratio is eventually $\sigma$, and by increasing $K$ if necessary, we may assume that
$\epsilon_{k} = \sigma \epsilon_{k-1}$ for all $k > K$. Then
\begin{multline*}
\epsilon_K - \epsilon_{K+1} + \epsilon_{K+2} - \epsilon_{K+3} + \cdots
= \epsilon_{K} (1 - \sigma + \sigma^2 + \cdots)
= \epsilon_{K} \frac{1}{1 + \sigma}\\
= \epsilon_{K} \frac{\sqrt{2}}{2} = \epsilon_K \frac{\sigma + 1}{2} = (\epsilon_{K+1} + \epsilon_{K})/2 = \pm \frac{q_{k+1}\alpha - p_{k+1} - q_k \alpha + p_k}{2}.
\end{multline*}
Regardless of the sign of the $\pm$, this is equal to $f(\alpha)$ plus an integer multiple of $\alpha$, and so 
since $a = i_K \alpha \pm (\epsilon_K - \epsilon_{K+1} + \epsilon_{K+2} - \cdots)$, the proof is complete.

\end{proof}

We can treat most of the $\lim a_k = 1$ cases in a similar fashion.

\begin{theorem}\label{case4}
If $\lim a_k = 1$ and $\alpha \notin \mathbb{Z}\alpha$, then almost every $x \in X^{(\alpha, I)}$ has a shift with $3$-block Gordon structure.
\end{theorem}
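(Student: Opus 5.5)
Reading the hypothesis as $a \notin \mathbb{Z}\alpha$ (the displayed ``$\alpha \notin \mathbb{Z}\alpha$'' appears to be a typo), I would argue as in \Cref{case3}. I track, for each large $k$, whether $a$ lies in a short or a long gap of $Q_k$. I then show that every itinerary either produces infinitely many $k$ with $\beta_k$ bounded away from $0$, which finishes the proof by Proposition~\ref{longint}, or forces $a \in \mathbb{Z}\alpha$. By Corollary~\ref{bdaway} I may assume from the outset that $i_k/q_k$ and $d(a,Q_k)/\epsilon_k$ are eventually at least $0.0001$. I may make the same assumption for the representation of $-a$, since $T_k - a$ has the same gaps as $T_k$. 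Fix $K$ with $a_k = 1$ for all $k > K$.

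The first step is to describe the dynamics of the itinerary when every level is a $1$-split. By Theorem~\ref{decomp} with $a_{k+1} = 1$, a short gap at level $k$ becomes a single long gap at level $k+1$. A long gap becomes a short gap followed by a long gap, with the order reversed when $q_k\alpha$ is negative. Hence in the $S/L$ itinerary of $a$, every $S$ is followed by $L$, and an $L$ may be followed by either letter.

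The second step is to rule out the patterns $SLLS$ and $LLLS$. Both the Case 4 and Case 5 computations in the proof of Theorem~\ref{onetwo} only use $1$-splits at the relevant levels: Case 4 uses $a_{k+2}=a_{k+3}=a_{k+4}=1$, and Case 5 uses three consecutive $1$-splits. They also use the assumptions $i/q \geq 0.0001$ and $d(a,Q)/\epsilon \geq 0.0001$ for $a$ and for $-a$. So each occurrence of these patterns gives some $\beta_{k'} > 10^{-9}$, and I may assume both patterns occur only finitely often. The eventual itineraries that remain are then \emph{(i)} $LLL\cdots$ forever, or \emph{(ii)} the alternation $SLSL\cdots$, since any $LL$ block that is eventually followed by $S$ creates one of the forbidden patterns.

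In case (i), $a$ lies in the long child of its long gap at every level. These long children share a fixed endpoint $i\alpha$, and their lengths tend to $0$, so $a = i\alpha \in \mathbb{Z}\alpha$. This contradicts the hypothesis (and also the assumption $d(a,Q_k) \geq 0.0001\epsilon_k$). In case (ii) I would mimic the series computation of \Cref{case3}. Eventually $\epsilon_{k+1} = \tau\epsilon_k$ with $\tau = (\sqrt5-1)/2$, because $\alpha$ is M\"obius equivalent to the golden mean. Following the alternating choice of subgap and keeping track of the sign alternation of $q_k\alpha$ writes $a = i_K\alpha \pm \sum_{j \geq 0} \epsilon_{K+2j}$ (possibly with a fixed first-term offset). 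The sum is $\epsilon_K/(1-\tau^2) = \epsilon_K/\tau = \epsilon_{K-1}$, which is $\pm q_{K-1}\alpha$ modulo $1$. Thus $a \in \mathbb{Z}\alpha$, again a contradiction.

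I expect the main obstacle to be verifying that the Case 4 and Case 5 estimates really go through unchanged when every neighbouring level is a $1$-split. The proof of Theorem~\ref{onetwo} sometimes used $a_{k+1}=2$ nearby, for example to pick out $k'$ or to bound $q_{k-1}$ against $q_k$ via Lemma~\ref{ratios}. I will recheck those constants with $a_j = 1$; Lemma~\ref{ratios} still gives ratios near $\phi$, so I expect only the numerical constants to change. A second, smaller task is the careful bookkeeping of signs and left/right orientation in the case (ii) series, to confirm that the limit lands in $\mathbb{Z}\alpha$ rather than in some half-integer translate as in the silver case.
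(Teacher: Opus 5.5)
Your reduction is sound and matches the paper's. The paper cites Cases 4 and 5 verbatim, so your worry about re-checking them with $1$-splits is unnecessary. Excluding $SLLS$ and $LLLS$ leaves only the eventual itineraries $L^\infty$ and $(SL)^\infty$, and both do force $a\in\mathbb{Z}\alpha$.

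\textbf{The gap.} Your justifications of these two terminal cases are interchanged, and the one you give for case (i) is false.

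\textbf{Case (i).} The sign of $q_k\alpha$ alternates. By Theorem~\ref{decomp}, the long child of a long gap therefore sits at its right end when $q_k\alpha>0$ and at its left end when $q_k\alpha<0$. The nested long gaps alternately keep their right and left endpoints; they do not share one. With $q_K\alpha>0$ and $J_K=[x,x+L_K]$, $x=i\alpha$, the nested gaps are $J_{K+1}=[x+\epsilon_K,x+L_K]$, then $J_{K+2}=[x+\epsilon_K,x+L_K-\epsilon_{K+1}]$, then $J_{K+3}=[x+\epsilon_K+\epsilon_{K+2},x+L_K-\epsilon_{K+1}]$, and so on. Hence $a=x+\sum_{j\ge0}\epsilon_{K+2j}=x+\epsilon_{K-1}\equiv(i-q_{K-1})\alpha$, with $i<q_{K-1}$ by Lemma~\ref{endpts}. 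This is a negative multiple of $\alpha$, so it lies in no $Q_k$. In fact $d(a,Q_k)=\epsilon_k$ for $k\ge K$, so your claimed contradiction with the $0.0001$ assumption also fails. For example, $\alpha=\tau=(\sqrt5-1)/2$ and $a=-\alpha$ has exactly this itinerary. Since $\mathbb{Z}\alpha$ is dense, knowing that the endpoints lie in $\mathbb{Z}\alpha$ proves nothing. You need the exact ratio $\tau$ and the series, which is precisely the computation you attached to case (ii).

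\textbf{Case (ii).} Every $L\to S$ step happens at levels of one parity, hence with one sign of $q_k\alpha$. A short gap before a $1$-split survives unchanged as a long gap. So, for $q_K\alpha>0$ at an $L$-level $K$, $J_{K+2j+1}=J_{K+2j+2}=[x,x+\epsilon_{K+2j}]$ for every $j$. Thus $a=x\in Q_K$ outright, with no series, and here the contradiction with $d(a,Q_k)\ge0.0001\epsilon_k$ really does occur.

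\textbf{Comparison with the paper.} With the two arguments swapped, your proof goes through, and it differs from the paper's at the $(SL)^\infty$ step. The paper does not rule that itinerary out. Instead it proves that each $LSLS$ occurrence gives $\beta_k>1/1080000$. It uses $\eta_k<\epsilon_{k+2}$ and the bound $i_k<q_{k-1}-0.0001q_k$ from the $-a$ trick. This yields $0.0001q_k$ long gaps keeping length $>2\epsilon_k+\epsilon_{k+3}$, after which only the $L^\infty$ series is needed. Your corrected route avoids that quantitative estimate, because $(SL)^\infty$ pins $a$ to a point of $Q_K$.
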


\begin{proof}
Suppose that $\lim a_k = 1$. By Cases 4 and 5 of the proof of \Cref{onetwo}, $X^{(\alpha, I)}$ has almost-sure absence of eigenvalues if there exist infinitely many values of $k$ for which $a$ lies in intervals which are short, then long, then long, then short, or long, then long, then long, then short. This is equivalent to infinitely many values of $x$ for which $a$ lies in long, then long, then short. We can therefore assume that this case happens only finitely many times.

A short interval becomes a long interval after a $1$-split, and so can only come from a long interval before a $1$-split.
So, if $a$ is in a short interval at some point, it is in a long interval at the previous step, and for large enough $k$, it is in a short interval at the previous step, and a long interval the step before that. If we show that the case 
long, then short, then long, then short leads to a large enough value of $\beta_k$, then we will have $\limsup \beta_k > 0$ in all cases where $a$ is in short intervals infinitely many times.

Assume that $a$ lies in a long gap at level $k$, then short at level $k+1$, then long at level $k+2$, then short at level $k+3$, and without loss of generality that $q_k \alpha$ is positive. Then, since the short interval $a$ lies in at level $k+3$ shares left endpoint with the long interval $a$ lies in at step $k$, $\eta_k < S_{k+3} = \epsilon_{k+2}$. Since $a$ lies in a long interval at level $k$, $0 \leq i_k < q_{k-1}$. 

We again argue using $-a$: we can write $-a = -i_k \alpha - \eta_k = (q_{k-1} - i_k) \alpha + (\epsilon_{k-1} - \eta_k)$. Again we can assume that $q_{k-1} - i_k > 0.0001 q_k$, and so $i_k < q_{k-1} - 0.0001q_k$. Now, consider any point $j\alpha \in Q_k$ for
$0 \leq j < 0.0001q_k$. This is the left endpoint of a long interval, and after shifting by $a$, it can be written 
as $(j + i_k)\alpha + \eta_k$. Since $j + i_k < q_{k-1}$, this is a left endpoint of a long interval shifted to the right by $\eta_k$, and so retains an interval of length at least $L_k - \eta_k > \epsilon_{k-1} + \epsilon_k - \epsilon_{k+2}
 = 2\epsilon_k + \epsilon_{k+3}$. Therefore, since there were $0.0001q_k$ such intervals,\
\[
\beta_k > \epsilon_{k+3}(q_k/10000) > q_k \epsilon_k/270000 > 1/1080000.
\]

Therefore, if $a$ lies in short intervals for infinitely many values of $k$, we have almost sure absence of eigenvalues by 
Proposition~\ref{longint}. To complete the proof, we must now only show that if $a$ eventually lies in long intervals, then $a \in \mathbb{Z}\alpha$. 

Suppose that there exists $K$ so that for $k > K$, $a_{k+1} = 1$ and $a$ lies in a long interval at level $k$. Then, $\alpha$ is 
M\"{o}bius equivalent to the golden mean $\gamma = \frac{1 + \sqrt{5}}{2}$, i.e. there exist $p,q,r,s \in \mathbb{Z}$ for which $\alpha = \frac{p \gamma + q}{r \gamma + s}$ and $|ps - qr| = 1$. Also note that for $k > K$, the length of a long interval is $\epsilon_{k-1} + \epsilon_k = \epsilon_{k-2}$. Then, if we assume WLOG that $q_K \alpha$ is positive, and if $a = i_K \alpha + \eta_K$ as usual, then we can write
\[
a = i_K \alpha + \epsilon_{K-1} - \epsilon_K + \epsilon_{K+1} - \epsilon_{K+2} + \cdots
\]
Note that 
\[
s\alpha = \frac{ps\gamma + qs}{r \gamma + s} = q \pm \frac{\gamma}{r\gamma + s} \textrm { and }
r\alpha = \frac{pr\gamma + qr}{r \gamma + s} = p \pm \frac{1}{r \gamma + s}.
\]
This means that two consecutive values $\epsilon_j$ and $\epsilon_{j+1}$ have ratio $\gamma$. Since $\epsilon_k = \epsilon_{k+1} + \epsilon_{k+2}$ for all $k > K$, this ratio is eventually $\gamma$, and by increasing $K$ if necessary, we may assume that
$\epsilon_{k-1} = \gamma \epsilon_k$ for all $k > K$. Then the above yields
\begin{multline*}
a = i_K \alpha + \epsilon_{K-1} - \epsilon_{K} + \epsilon_{K+1} - \epsilon_{K+2} + \cdots
= i_K \alpha + \epsilon_{K} (\gamma - 1 + \gamma^{-1} - \gamma^{-2} + \cdots)\\
= i_K \alpha + \epsilon_{K} \frac{\gamma}{1 + \gamma^{-1}}
= i_K \alpha + \epsilon_{K} = (i_K + q_{K})\alpha.
\end{multline*}

\end{proof}

We now treat the final cases, which follow immediately from known results.

\begin{theorem}\label{case5}
If $\lim a_n = 2$ or $\lim a_n = 1$ and $|I| \in \mathbb{Z}\alpha$, then almost every $x \in X^{(\alpha, I)}$ has a shift with $3$-block Gordon structure.
\end{theorem}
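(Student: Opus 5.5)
The plan is to bypass the continued-fraction case analysis of \Cref{onetwo}. When $|I| \in \mathbb{Z}\alpha$, the set $Q_k + a$ nearly coincides with $Q_k$, so $T_k$ is essentially $Q_k$ with a bounded number of extra points. The long gaps of $Q_k$ then give $\beta_k$ bounded away from $0$ directly, and we finish with \Cref{longint}.

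\textbf{Reduction.} Write $a = m\alpha \pmod 1$ for a fixed integer $m$. Replacing $I$ by its complement if necessary (which only exchanges the roles of $I$ and $I^c$ in the coding, replaces $a$ by $1-a = -m\alpha \pmod 1$, and preserves $3$-block Gordon structure), we may take $m \geq 1$. For all $k$ large enough we then have $q_k > m$ and $\epsilon_k < \min(a, 1-a)$, so \Cref{Tk} and \Cref{longint} apply. We write
\[
Q_k + m\alpha = \{j\alpha : m \leq j < q_k\} \cup \{(q_k + j)\alpha : 0 \leq j < m\}.
\]
The first set is contained in $Q_k$. Each point of the second set is $(q_k+j)\alpha = j\alpha \pm \epsilon_k$. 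Hence $T_k \subseteq Q_k \cup E_k$, where $E_k = \{j\alpha \pm \epsilon_k : 0 \le j < m\}$ has at most $m$ points, and this count is independent of $k$.

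\textbf{Counting gaps.} By \Cref{longshort} and \Cref{endpts}, $Q_k$ has exactly $q_{k-1}$ long gaps, each of length $L_k = \epsilon_{k-1} + \epsilon_k$. At most $m$ of these gaps contain a point of $E_k$. Each of the remaining at least $q_{k-1} - m$ long gaps is also a gap of $T_k$. By the interpretation of $\beta_k$ as total overage beyond $2\epsilon_k$, each such gap contributes $L_k - 2\epsilon_k = \epsilon_{k-1} - \epsilon_k$. The recursion $\epsilon_{k-1} = a_{k+1}\epsilon_k + \epsilon_{k+1}$ gives $\epsilon_{k-1} - \epsilon_k \geq \epsilon_{k+1}$. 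Since $\lim a_n \in \{1,2\}$, \Cref{ratios} gives $\epsilon_{k+1} > \epsilon_k/4$ and $q_{k-1} > q_k/4$ eventually. Hence, for large $k$ (when $q_{k-1} \geq 2m$),
\[
\beta_k \geq (q_{k-1} - m)\,\epsilon_{k+1} \geq \tfrac{1}{2} q_{k-1}\epsilon_{k+1} > \tfrac{1}{32} q_k \epsilon_k > \tfrac{1}{32 (a_{k+1}+2)} \geq \tfrac{1}{128},
\]
where the penultimate bound is \Cref{qeps}. Then $\limsup \beta_k > 0$, and \Cref{longint} gives the conclusion.

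\textbf{Main obstacle.} The delicate point is the golden case $\lim a_n = 1$. There the short gaps have length $S_k = \epsilon_{k-1} \approx 1.618\,\epsilon_k < 2\epsilon_k$ and contribute nothing, so the argument must rely only on the long gaps. This is why I use the bound $L_k - 2\epsilon_k \geq \epsilon_{k+1}$ rather than a bound through $S_k$. I also need to check that the boundary bookkeeping is harmless: points of $E_k$ at distance exactly $\epsilon_k$ from $Q_k$ only affect gaps adjacent to the $m$ indices $j<m$, and these lie among the at most $m$ excluded long gaps (or short gaps, which we discard anyway).
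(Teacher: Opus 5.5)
Your proof is correct, but it takes a genuinely different route from the paper.

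\textbf{The paper's route.} The paper does not use $\beta_k$ for this theorem. It observes that for $|I|\in\mathbb{Z}\alpha$ and $\alpha$ M\"{o}bius equivalent to the golden or silver mean, $X^{(\alpha,I)}$ is quasi-Sturmian. By \cite{quasisturm}, every point is then a shift of $\pi(s)$ for a fixed primitive substitution $\pi$ and some $s$ in the corresponding Sturmian shift $S$. The language of $S$ contains a word $awawawa$, so the result quoted from \cite{no3block} gives a positive-measure set of points of $S$ with $3$-block Gordon structure. That set is pushed through $\pi$ and spread out by ergodicity.

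\textbf{Your route.} You stay inside the paper's own machinery. $T_k$ is just $Q_k$ plus the $m$ points $(q_k+j)\alpha$, so at least $q_{k-1}-m$ long gaps of $Q_k$ survive in $T_k$. Each has overage $\epsilon_{k-1}-\epsilon_k\ge\epsilon_{k+1}$, and \Cref{ratios} and \Cref{qeps} then give $\beta_k>1/128$ for all large $k$. This is essentially the degenerate case $i_k=m$, $\eta_k=0$ of \Cref{smallboth}, which would also apply directly once $a_k,a_{k+1},a_{k+2}\le 2$.

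\textbf{What each buys.} Your argument is self-contained and gives an explicit lower bound on $\beta_k$ at every large scale. It also avoids having to carry both the Gordon structure and the positivity of measure through the substitution $\pi$, a step the paper asserts without detail. Moreover, it uses only boundedness of the partial quotients, not the existence of $\lim a_n$. Combined with \Cref{kaminaga}, it therefore treats $|I|\in\mathbb{Z}\alpha$ uniformly for every irrational $\alpha$. The paper's route, in exchange, places these examples within the substitution/quasi-Sturmian framework and exhibits explicit cubes $awawawa$ in the language.

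The only loose end in your write-up is the degenerate case $m=0$ (i.e.\ $|I|\in\{0,1\}$), where the coding is constant and the claim is trivial.
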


\begin{proof}
This follows from known results about subshifts generated by primitive substitutions. 
Specifically, it was shown in \cite{no3block} that for any primitive substitution $\tau$, if we denote the induced subshift by $X_\tau$, and if there is a letter $a$ and word $w$ so that $(aw)^3a = awawawa$ is in the language of $X_\tau$, then almost all points of $X_\tau$ have a shift with $3$-block Gordon structure. (This hypothesis is equivalent to having so-called repetitivity index greater than $3$, but we do not need a full definition here). 

Suppose that $\alpha$ and $I$ are as in the theorem. Then $X^{(\alpha, I)}$ is quasi-Sturmian for $\alpha$ M\"{o}bius equivalent to either $\gamma = \frac{\sqrt{5}-1}{2}$ or $\sigma = \sqrt{2}-1$. This means (by \cite{quasisturm}) that there exists a fixed primitive substitution $\pi$ so that every point of $X^{(\alpha, I)}$ is the shift of some $\pi(s)$ for $s \in S$, where $S$ is either the Sturmian according to $\gamma$ or $\sigma$. In the former case, $S$ is generated by the primitive substitution $\tau: 0 \mapsto 1, 1 \mapsto 01$, and in the latter $S$ is generated by the primitive substitution $\tau: 0 \mapsto 01, 1 \mapsto 001$.

Finally, in the former case the language of $S$ contains $awawawa$ for $a = 1$ and $w = 0110$, and in the latter case the language of $S$ contains $awawawa$ for $a = 0$ and $w = 1$. Therefore, in both cases, a positive measure subset of points of $S$ have $3$-block Gordon structure. The image of this set under $\pi$ is then a positive measure subset of $X^{(\alpha,I)}$ of points with $3$-block Gordon structure, and the proof is now complete by ergodicity.

\end{proof}

Theorems~\ref{kaminaga}, \ref{case1}, \ref{onetwo}, \ref{case3}, \ref{case4}, and \ref{case5} together complete the proof of the forward direction of Theorem~\ref{mainthm}.

\section{Hausdorff dimension of the exceptional set}

Our proof shows that for all $\alpha, a$ except the countable set described in \Cref{mainthm}, the set $T$ of points in the circle whose coding sequences do not have $3$-block Gordon structure has Lebesgue measure less than $1$ (and therefore the union of the rotations $T^c + n\alpha$ has full Lebesgue measure). The argument already involves covers of $T$ by finite unions of intervals with controlled lengths and cardinalities, and so it is natural to use it to bound the Hausdorff dimension of $T$. As a proof of concept, we here present such a proof for a large set of $\alpha$, and leave a fuller characterization to future work. We recall Theorem~\ref{HD}.

\begin{T1}
If $a_k \geq 4$ for a set of $k$ of positive upper density and $\limsup_k \left(\prod_{i=1}^k a_i\right)^{1/k} < \infty$, then $3$-block Gordon structure holds for all $x \in X^{(I,\alpha)}$ except for codings of a set 
of Hausdorff dimension strictly less than $1$.
\end{T1}

\begin{proof}
By definition of upper density, there exists $\delta > 0$ and $(k_n)$ so that at least $\delta k_n$ of the digits
$a_1, \ldots, a_{k_n}$ are greater than or equal to $4$. 
By the proof of Proposition~\ref{longint}, 
\[
T \subset \liminf (T_k + (-\epsilon_k, \epsilon_k)),
\]
where we recall that $T_k = Q_k \cup (Q_k + a)$. Since Hausdorff dimension of a countable union is the supremum of the dimensions of the sets, it is sufficient to bound the dimension of intersections $I_K = \bigcap_{k = K}^{\infty} 
T_{k} + (-\epsilon_k, \epsilon_k)$. 

We will bound the dimension of infinite intersections by considering finite intersections of the form
$\bigcap_{k=K}^{k_N} T_k + (-\epsilon_k, \epsilon_k)$ as covers. For any $N$, denote by $K \leq s_1, s_2, \ldots, s_M \leq k_N$ values so that for all
$i$, $a_{s_i + 1} \geq 4$. Note that by definition, $M \geq \delta k_N - K$. It is not hard to assume without loss of generality (by reducing $\delta$) that $s_{i+1} - s_i \geq 17$ for each $i$ and that either $a_{s_i+1} \geq 5$ or $a_{s_i + 1} = 4, a_{s_i + 2} \leq 4$ for each $i$. We first rewrite 
\[
I_K \subset \bigcap_{k = K}^{k_N} T_{k} + (-\epsilon_{k}, \epsilon_{k}) \subseteq
\bigcap_{i = 1}^{M} T_{s_i} + (-\epsilon_{s_i}, \epsilon_{s_i}) = 
\bigcup_{\omega \in \{0,1\}^{M}} \bigcap_{i = 1}^{M} (Q_{s_i} + \omega(i) a + (-\epsilon_{s_i}, \epsilon_{s_i})).
\]

We now inductively give an upper bound on the number of intervals in such intersections by considering intersections at consecutive levels $s_i$ and $s_{i+1}$. Specifically, suppose that $I$ is an interval of length $2\epsilon_{s_i}$ in 
$Q_{s_i} + \omega(i) a + (-\epsilon_{s_i}, \epsilon_{s_i})$ for some $n$. 
We now wish to consider how many intervals in $Q_{s_{i+1}} + \omega(i+1) a + (-\epsilon_{s_{i+1}}, \epsilon_{s_{i+1}})$ have nonempty intersection with $I$. 

Since $I$ has length $2\epsilon_{s_i}$ and short intervals in $Q_{s_{i+1}}$ have length $\epsilon_{s_{i+1}-1}$, 
at most $\lceil 2\epsilon_{s_i}/\epsilon_{s_{i+1}-1} \rceil$ intervals in $Q_{s_{i+1}} + \omega(i+1) a + 
(-\epsilon_{s_{i+1}}, \epsilon_{s_{i+1}})$ have nonempty intersection with $I$. Proceeding inductively, we see that 
\[
\bigcap_{i = 1}^{M} (Q_{s_i} + \omega(i) a + (-\epsilon_{s_i}, \epsilon_{s_i}))
\]
can be covered by $q_{s_1} \prod_{i=1}^{M} \lceil 2\epsilon_{s_{i-1}}/\epsilon_{s_i-1} \rceil$ intervals of length $2\epsilon_{s_{i+1}}$. By Lemma~\ref{ratios}, $\epsilon_{j-2}/\epsilon_j > a_j a_{j+1} + \frac{a_{j+1}}{a_{j+1} + 1} \geq 1.5$ for every $j$. Since 
$s_{i+1} - s_i \geq 17$, $\epsilon_{s_i}/\epsilon_{s_{i+1}-1} \geq (1.5)^8 > 25$. Therefore, for every $i$,
\[
\lceil 2\epsilon_{s_i}/\epsilon_{s_{i+1}-1} \rceil \leq 2.08 \epsilon_{s_i}/\epsilon_{s_{i+1}-1}.
\]

Therefore,
$\bigcap_{i=1}^{N} T_{s_1} + (-\epsilon_k, \epsilon_k)$ can be covered by
$2^M \left(q_{s_1} \prod_{i=1}^M 2.08 \epsilon_{s_i}/\epsilon_{s_{i+1}-1}\right)$ intervals of length $2\epsilon_{s_M}$. And,
\[
2^M \left(q_{s_1} \prod_{i=1}^M 2.08 \epsilon_{s_i}/\epsilon_{s_{i+1}-1}\right) = 
q_{s_1} 4.16^M \frac{\epsilon_{s_1-1}}{\epsilon_{s_M}} \prod_{i = 1}^M (\epsilon_{s_i}/\epsilon_{s_i-1}).
\]

By Lemma~\ref{ratios}, since $a_{s_i+1} \geq 5$ or $a_{s_i+1} = 4, a_{s_i+2} \geq 4$, we know that $\epsilon_{s_i-1}/\epsilon_{s_i} > 4.2$, and so our cover has cardinality less than $(q_{s_1} \epsilon_{s_1 - 1}) (4.16/4.2)^M (\epsilon_{s_M})^{-1}$. Recalling that $M > \delta k_N - K > \delta s_M - K$, this means that for $D = (q_{s_1} \epsilon_{s_1 - 1}) (105/104)^K$, we have a cover $C$ of $I_K$ by at most 
$D (104/105)^{\delta s_M} (\epsilon_{s_M})^{-1}$ intervals of length $2\epsilon_{s_M}$. Then, for any $\eta > 0$,
\[
\sum_{I \in C} |I|^{1-\eta} \leq 
D (104/105)^{\delta s_M} (\epsilon_{s_M})^{-1} (2\epsilon_{s_M})^{1-\eta} \leq 2D (104/105)^{\delta s_M} \epsilon_{s_M}^{-\eta}.
\]
Finally, since $\limsup_k \left(\prod_{i=1}^k a_i\right)^{1/k} < \infty$, there exists $B$ so that 
\[
\epsilon_{s_M}^{-1} = \prod_{j = 1}^{s_M} \epsilon_{j-1}/\epsilon_j < \prod_{j=1}^{s_M} (a_{j+1} + 1) < B^{s_M}.
\]
Then, if we choose $\eta =  \frac{\delta \log (105/104)}{\log B}$, then 
$2D (104/105)^{\delta s_M} \epsilon_{s_M}^{-\eta} < 2D$, meaning that $\sum_{I \in C} |I|^{1-\eta} < 2D$. Since this holds for arbitrarily fine covers $C$ (by increasing $N$), we see that $HD(I_K) \leq 1 - \eta$. Since $\eta$ is independent of $K$, this means that $HD(T) \leq 1- \eta$, completing the proof.

\end{proof}

We remark that for a set of $\alpha$ of full Lebesgue measure, the continued fraction digits are at least $4$ for a set of natural density $\log_2(5/4)$ and have geometric means approaching Khintchine's constant 
$K = 2.68545\ldots$ (see for instance \cite{khinchin}). For such $\alpha$, one can then use any $\delta > \log_2(5/4)/17$ (recalling that we had to pass to a subsequence with indices separated by $17$) and 
$B > 2K$ in the above proof, yielding the following corollary.

\begin{corollary}\label{HDcor}
There exists a set $S \subset [0,1)$ of full Lebesgue measure so that for every $\alpha \in S$ and every $a \in [0,1)$, the set of points $T$ in the circle whose codings do not have a shift with $3$-block Gordon structure has 
$HD(T) \leq 1 - \frac{\log_2(5/4) \log(105/104)}{17\log 2K} \approx 0.99989 \ldots < 1$.
\end{corollary}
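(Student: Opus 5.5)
This corollary follows by feeding full-measure statistics of continued fraction digits into the proof of Theorem~\ref{HD} and tracking the constants. I would take $S$ to be the set of irrational $\alpha \in [0,1)$ with two properties. First, the natural density of $\{k : a_k \geq 4\}$ exists and equals the Gauss measure of $\{a_1 \geq 4\}$, namely $\log_2\frac{(1+1/4)}{1} \cdot$ (computed as $\log_2(1+\frac{1}{4}) = \log_2(5/4)$). Second, $(\prod_{i=1}^k a_i)^{1/k} \to K$, Khintchine's constant. Both properties hold on a set of full Lebesgue measure by the ergodicity of the Gauss map (the Birkhoff theorem applied to $\chi_{\{a_1\ge4\}}$ and to $\log a_1$, which is integrable), as recorded in \cite{khinchin}. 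Neither property depends on $a$, so $S$ is uniform in $a$.

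For $\alpha \in S$ and any $a$, both hypotheses of Theorem~\ref{HD} hold. The set of $k$ with $a_k\ge4$ has positive density, hence positive upper density, and the geometric means are bounded. So $HD(T) \leq 1-\eta$ with $\eta = \delta\log(105/104)/\log B$, where $\delta$ and $B$ are the constants produced in that proof. What remains is to identify admissible values.

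\emph{The constant $\delta$.} The proof passes to a subsequence $s_i$ of indices with $a_{s_i+1}\ge 4$ and gaps at least $17$. It also requires each $s_i$ to be of the good type ($a_{s_i+1}\ge5$, or $a_{s_i+1}=4$ with $a_{s_i+2}\le 4$). Every maximal run of consecutive $4$'s ends in an index of good type unless it is followed by a digit $\geq 5$, which is itself good. So good indices make up essentially the whole set. A greedy selection with spacing $17$ retains at least a $1/17$ fraction of the indices, up to boundary effects. So any density strictly below $\log_2(5/4)/17$ is achievable.

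\emph{The constant $B$.} The proof needs $\prod_{j\le n}(a_{j+1}+1) < B^n$. Since $a+1\le 2a$, we have $(\prod (a_{j+1}+1))^{1/n} \le 2(\prod a_{j+1})^{1/n} \to 2K$, so any $B>2K$ works for large $n$. Enlarging the constant $D$ absorbs the finitely many small $n$.

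Letting $\delta\uparrow \log_2(5/4)/17$ and $B\downarrow 2K$ gives $HD(T)\le 1-\frac{\log_2(5/4)\log(105/104)}{17\log 2K}$, because the bound holds for every admissible pair and so passes to the infimum. The main obstacle is the bookkeeping in the density step: the sparsification to gaps of $17$ and the good-type restriction must lose no more than the factor $1/17$, at least up to an arbitrarily small error. If that cannot be shown cleanly, I would settle for stating the bound with $\delta$ slightly smaller. The strict inequality $<1$ is unaffected either way, and the numerical value $0.99989$ follows by direct evaluation.
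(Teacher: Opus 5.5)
Your route matches the paper's. The corollary is justified there only by the remark preceding it: take $\alpha$ whose digits satisfy $a_k\ge 4$ on a set of natural density $\log_2(5/4)$ and whose geometric means tend to $K$, then run the proof of Theorem~\ref{HD} with $\delta$ just below $\log_2(5/4)/17$ and $B$ just above $2K$. Your choice of $S$ via the ergodic theorem for the Gauss map is fine, as are the bound $a+1\le 2a$ giving any $B>2K$ and the passage to the infimum over admissible $(\delta,B)$.

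The step that does not hold is your density bookkeeping. It is false that good-type indices make up essentially all of $\{k: a_k\ge 4\}$. The bad ones ($a_k=4$, $a_{k+1}\ge 5$) occur for typical $\alpha$ with density equal to the Gauss measure of the cylinder $\{a_1=4,\ a_2\ge 5\}=[5/21,1/4]$, namely $\log_2(105/104)>0$. So the good indices have density only $\log_2(26/21)$. Your greedy count applied to them gives only $\delta<\log_2(26/21)/17$, a strictly weaker constant than the one stated, and your fallback of shrinking $\delta$ does not recover it. The paper's remark silently skips this point too, accounting only for the spacing $17$.

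Your own observation supplies the repair: if $s$ is bad (i.e.\ $a_{s+1}=4$, $a_{s+2}\ge 5$), then $s+1$ is good. So run the greedy selection with spacing $17$ on \emph{all} $s$ with $a_{s+1}\ge 4$; this keeps at least a $1/17$ fraction of them. Then replace each bad selected $s$ by $s+1$. The resulting indices are all good and increasing, with $s_{i+1}-s_i\ge 16$.

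In the proof of Theorem~\ref{HD}, the spacing is used only to get $\lceil 2\epsilon_{s_i}/\epsilon_{s_{i+1}-1}\rceil\le 2.08\,\epsilon_{s_i}/\epsilon_{s_{i+1}-1}$. With spacing $16$ you still have $\epsilon_{s_i}/\epsilon_{s_{i+1}-1}\ge \epsilon_{s_i}/\epsilon_{s_i+14}>1.5^7>17$, and $\lceil 2x\rceil\le 2x+1\le 2.08x$ as soon as $x\ge 12.5$. Hence the factor $104/105$ is untouched, $\delta$ can be taken arbitrarily close to $\log_2(5/4)/17$, and the stated bound follows exactly.
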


This is of course quite far from optimal; the above bound could surely be improved by using the fact that higher values than $4$ appear with positive density.


\section{The silver mean case}\label{silver}

We conclude with a few more details about the interesting remaining case where we do not know whether almost-sure absence of eigenvalues hold, specifically $\lim a_n = 2$ and $|I| \in \mathbb{Z}\alpha + f(\alpha)$. First, we prove that such shifts have no points with $3$-block Gordon structure, completing the proof of \Cref{mainthm}.

\begin{theorem}\label{badcase}
When $\lim a_n = 2$ and $|I| \in \mathbb{Z}\alpha + f(\alpha)$, there exists $L$ so that $X^{(\alpha,I)}$ contains no word of the form $www$ with $|w| > L$. In particular, there are no points in $X^{(\alpha,I)}$ with $3$-block Gordon structure. 
\end{theorem}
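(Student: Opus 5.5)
The plan is to translate the existence of a long cube $www$ into a covering condition on the circle, and then show that the self-similar position of $a$ forced by $a \in \mathbb{Z}\alpha + f(\alpha)$ makes the condition impossible at all large scales. The final ``in particular'' is immediate once the first claim is proved. If $x$ has $3$-block Gordon structure along a sequence $q_k \to \infty$, then $x$ restricted to $(-2q_k, q_k]$ is a word $www$ with $|w| = q_k$, which exceeds $L$ for large $k$.

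For the main claim I will fix $n = |w|$. An occurrence of $www$ in $c^{(x,I,\alpha)}$ at position $0$ means that $c(j) = c(j+n)$ for all $0 \le j < 2n$. This says that none of the $2n$ points $x + j\alpha$ lies in the ``crossing set'' $B_n$. Here $B_n$ is the union of the two half-open intervals of length $\|n\alpha\|$ adjacent to $0$ and to $a$, on the side determined by the sign of $n\alpha$. Equivalently, $x$ lies in a gap of
\[
\{-j\alpha\}_{0\le j<2n} \cup \{a - j\alpha\}_{0 \le j < 2n}
\]
that still has positive room after removing a neighbourhood of size $\|n\alpha\|$. This is the same formalism as the sets $T_k$ and the quantity $\beta_k$ from Lemma~\ref{Tk}, with $q_k$ replaced by a general $n$ and $2n$ points instead of $q_k$. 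Some care with one-sided rather than two-sided neighbourhoods is needed here.

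The first key step is to reduce to finitely many shapes of $n$ relative to the convergents. Take $q_k \le n < q_{k+1}$. Since $a_j = 2$ for $j$ large, $\|n\alpha\| \ge \epsilon_k$, and the Three-gap Theorem together with Lemma~\ref{longshort} bounds the largest gap of $2n$ consecutive orbit points. Because $2n \ge 2q_k$, every gap of $Q_{2n}$ is at most of order $L_{k+1} \approx (1+\sigma)\epsilon_k$, where $\sigma = \sqrt{2}-1$. The requirement that some gap exceed $\|n\alpha\|$ should force $n \in \{q_k,\ q_k + q_{k-1}\}$, or a similar bounded list. In every other case a gap of $2n$ points is already shorter than $\|n\alpha\|$ even before the points $a - j\alpha$ are added, so no cube of that period exists.

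The second key step, which I expect to be the main obstacle, is an exact gap computation for each surviving candidate period, with $2n$ points. The proof of Theorem~\ref{case3} gives an explicit representation: for $K$ large, $a = i_K\alpha \pm \epsilon_K\frac{1}{1+\sigma}$, and the same formula holds at every later level. So at level $k$ the point $a$ sits at a fixed proportion $\eta_k/\epsilon_k = 1/(1+\sigma) = \sqrt2/2$ inside a short gap. Combining this with Theorem~\ref{decomp}, with $a_{k+1} = a_{k+2} = 2$, I will list exactly where the points $a - j\alpha$ fall inside the long and short gaps of the $2n$-point orbit. I will then check that every resulting gap has length at most $\|n\alpha\|$, so the translate of $B_n$ covers the circle. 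One expects equality-type coincidences, since $\sqrt2/2$ is exactly the threshold. Strictness must be checked against the half-open conventions, and points $x \in \mathbb{Z}\alpha \cup (a+\mathbb{Z}\alpha)$ must be handled by passing to limits in the closure $X^{(I,\alpha)}$. Because everything is self-similar with ratio $\sigma$, it suffices to verify these finitely many configurations once. Then $L$ can be taken to be $q_K$ for the $K$ beyond which $a_k = 2$ and the representation of $a$ is valid.
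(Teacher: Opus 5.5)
Your route is sound, but it is less economical than the paper's. You use the forward, one-sided form of the cube condition: $2n$ points $x+j\alpha$, each avoiding half-open intervals of length $\|n\alpha\|$ beside $0$ and $a$. The paper uses the equivalent centred form: only the $m$ points of the middle block matter, each avoiding open $\|m\alpha\|$-neighbourhoods of $0$ and $a$.

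The centred form is monotone in the period. For $q_k\le m<q_{k+1}$ the point set $\{j\alpha,\ a+j\alpha : 0\le j<m\}$ contains $T_k$, and $\|m\alpha\|\ge\epsilon_k$. So the single fact $T_k+(-\epsilon_k,\epsilon_k)=\mathbb{T}$ disposes of every period in $[q_k,q_{k+1})$ at once. That fact needs only two inputs, both coming from $a$ lying in short gaps at levels $k,k+1,k+2$:
\begin{itemize}
\item $i_k\ge q_{k-1}$;
\item $\eta_k+\epsilon_k\in(L_k-2\epsilon_k,2\epsilon_k)$ and $\eta_k\in(S_k-2\epsilon_k,\epsilon_k)$.
\end{itemize}

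In your form this monotonicity is hidden, because the side of $B_n$ flips with the sign of $n\alpha$. You therefore need the per-$n$ three-gap reduction. It does work, and the conclusion is sharper than your list, though your intermediate bound is wrong. Using $a_k=a_{k+1}=a_{k+2}=2$:
\begin{itemize}
\item For $q_k<n<q_k+q_{k-1}/2$, the largest gap among $2n$ orbit points is $\epsilon_{k-1}$, not of order $L_{k+1}$. But here $\|n\alpha\|\ge\epsilon_{k-2}-\epsilon_k=2\epsilon_{k-1}$, so the orbit alone covers.
\item For $n=q_k+q_{k-1}$, the largest gap equals $\|n\alpha\|=\epsilon_k+\epsilon_{k+1}$, so the half-open intervals tile.
\item Every other $n$ with $2n\ge q_{k+1}$ has $\|n\alpha\|\ge2\epsilon_k>\epsilon_k+\epsilon_{k+1}$, which is at least every gap.
\end{itemize}
So only $n=q_k$ needs the points $a-j\alpha$, and there your check is the paper's claim in other coordinates.

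Nor is $\sqrt2/2$ a threshold. With $\eta_k=\epsilon_k/\sqrt2$ and $\epsilon_{k-1}=(1+\sqrt2)\epsilon_k$, the point of $V_k$ exactly bisects each long gap of $Q_k$. Every gap of $T_k$ then has length $\epsilon_k/\sqrt2$ or $(1+1/\sqrt2)\epsilon_k$, both below $2\epsilon_k$ with a definite margin.

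One case is genuinely missing. You describe $B_n$ as two half-open intervals of length $\|n\alpha\|$ at $0$ and $a$, but that holds only when $\|n\alpha\|\le\min(a,1-a)$. In general $B_n=I\,\triangle\,(I-n\alpha)$, and infinitely many $n$ have $\|n\alpha\|>\min(a,1-a)$. Neither of your key steps covers these $n$. The paper's centred argument has the same restriction.

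The fix is the separate argument at the end of the paper's proof. Let $J$ be the shorter of $I,I^c$ and $b$ its letter. If $\|n\alpha\|>|J|$, then $y\in J$ forces $y+n\alpha\notin J$, so $b$ never recurs at distance $n$. By minimality every sufficiently long word $w$ contains $b$, so $www$ cannot occur. Your $L$ must also exceed that length, not just $q_K$.
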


\begin{proof}

From the proof of \Cref{case3}, we know that if
$a = |I| \in \mathbb{Z}\alpha + f(\alpha)$, then we can write
\[
a = i \alpha + (\epsilon_K - \epsilon_{K+1} + \epsilon_{K+2} - \cdots)
\]
for some very large value of $K$ with $q_K \alpha$ positive. But then, by Figure~\ref{fig6}, we can see that $a$ is in short intervals before $2$-splits for all $k > K$.

We now claim that if $a_{k+1} = a_{k+2} = 2$ and $a$ is in a short interval at steps $k$, $k+1$, and $k+2$, then 
$T_k + (-\epsilon_k, \epsilon_k) = [0,1)$.

Suppose that the above holds, and without loss of generality we assume that $q_k \alpha$ is positive. Then by Lemma~\ref{endpts}, $a = i_k \alpha + \eta_k$ for $q_{k-1} \leq i_k < q_k$. 
At step $k$, $a$ lies in the short
interval $(i_k \alpha, i_k \alpha + \epsilon_{k-1})$. 
Since $q_k \alpha$ is positive, the unique short subinterval at step $k+1$ is $(i_k \alpha, i_k \alpha + \epsilon_k)$, and since $q_{k+1} \alpha$ is negative, the unique short subinterval at step $k+1$ is 
$(i_k \alpha + \epsilon_k - \epsilon_{k+1}, i_k \alpha + \epsilon_k)$. Since $a$ lies in this interval, $\epsilon_k - \epsilon_{k+1} < \eta_k < \epsilon_k$. Therefore, $\eta_k + \epsilon_k \in (2\epsilon_k - \epsilon_{k+1}, 2\epsilon_k)$. Finally, since
$a_{k+1} = a_{k+2} = 2$,
\[
L_k - 2\epsilon_k = \epsilon_{k-1} - \epsilon_k = \epsilon_k + \epsilon_{k+1} = 2\epsilon_k - \epsilon_{k+1} - \epsilon_{k+2} < 2\epsilon_k - \epsilon_{k+1}.
\]
Therefore, 
\begin{equation}\label{oops}
\eta_k + \epsilon_k \in (L_k - 2\epsilon_k, 2\epsilon_k).
\end{equation}

By (\ref{uv}), we can write $Q_k + a = U_k \cup V_k$ where
\[
U_k = \{j \alpha + \eta_k \ : \ i_k \leq j < q_k\} \textrm{ and } V_k = \{j \alpha + (q_k \alpha + \eta_k) \ : \ 0 \leq j < i_k\}.
\]

Since all $j\alpha$ with $0 \leq j < q_{k-1}$ are left endpoints of long intervals and since $i_k \geq q_{k-1}$, all points in $V_k$ are in long intervals in $Q_k$. In particular, by (\ref{oops}), all long intervals in $Q_k$ will be split into intervals of lengths less than $2\epsilon_k$ by points in $V_k$. 

Short intervals are split by either points in $U_k$ or $V_k$. However, by \ref{oops},
$\eta_k + \epsilon_k \in (L_k - 2\epsilon_k,2\epsilon_k)$, and so
$\eta_k \in (L_k - 3\epsilon_k, \epsilon_k)$. Since $S_k = L_k - \epsilon_k)$, this means that
\[
\eta_k, \eta_k + \epsilon_k \in (S_k - 2\epsilon_k, 2\epsilon_k).
\]
Therefore, all short intervals in $Q_k$ are also split into intervals of lengths less than $2\epsilon_k$, meaning that $T_k + (-\epsilon_k, \epsilon_k) = [0,1)$ for all $k > K$ as claimed.

Now choose any $m > q_K$, and choose the unique value of $k$ for which $q_k \leq m < q_{k+1}$. it is apparent that a statement similar to Lemma~\ref{Tk} holds, i.e., if $|m \alpha| < \min(a, 1-a)$, then a coding sequence 
$c = c^{(x,I,\alpha)}$ satisfies $c(i \pm m) = c(i)$ for all $-m < i \leq 0$ iff $x$ is distance greater than $|m \alpha|$ from all points of the form $-i\alpha$ and $-i\alpha + a$ for $0 \leq i < m$. 

However, since $T_k + (-\epsilon_k, \epsilon_k) = [0,1)$, we know that every $x \in [0,1)$ is within distance $\epsilon_k$ of some point of the form
$-i\alpha$ or $-i\alpha + a$ for $0 \leq i < q_k$. Also, since $m < q_{k+1}$, $|m \alpha| \geq \epsilon_k$, and so 
$x$ is distance less than $|m \alpha|$ from the same point, with $0 \leq i < q_k \leq m$. In other words, as long as $|m \alpha| < \min(a, 1-a)$, no triple $www$ with $|w| = m$ is in the language of $X^{(\alpha, I)}$. 

It remains only to deal with the case where $|m \alpha| > \min(a, 1-a)$. For any such $m$, if we denote by $J$ the shorter of the intervals $I, I^c$, then $x \in J \Longrightarrow x + m\alpha \notin J$. This means that if we denote by $b$ the letter coding the interval $J$, then for any coding sequence $c \in X^{(I,\alpha)}$, $c(i) = b \Longrightarrow c(i+m) \neq b$. Finally, we note that by minimality, we can without loss of generality assume that $K$ is so large that 
every word $w$ of length at least $q_K$ contains a $b$, meaning that $www$ cannot possibly be in the language of $X^{(\alpha, I)}$, completing the proof.

\end{proof}

It is perhaps instructive to look in a bit more detail at some representative examples.

\begin{example}\label{ex1}
Define $\alpha = \sqrt{2}-1$ (i.e. $a_k = 2$ for all $k$) and 
$a = f(\alpha) = \frac{\alpha+1}{2} = \sqrt{2}/2$. 

This example is extremely related to a Sturmian subshift. Specifically, if we define 
$X$ to be the Sturmian induced by rotation number $a$, then it is easily checked that the silver mean example $S$ just comes from taking alternate letters from points of $X$. In other words,
\[
S = \{(x_{2n})_{n \in \mathbb{Z}} \ : \ x \in X\}.
\]

Since $X$ comes from a Sturmian with continued fraction expansion $a = [1,2,2,2,2,\ldots]$, it in fact does have almost uniform $3$-block Gordon structure! It is then natural to wonder why this does not pass through to $S$. The answer is that 
for $a$, the values of $q_k$ are given by the recursion $q_1 = 1$, $q_2 = 3$, $q_{k+1} = q_k + 2q_{k-1}$, and all $q_k$ are therefore odd. Our proofs therefore only yield triples of words $w$ of odd length, and taking every other letter of $www$ only yields a triple when $w$ has even length.
\end{example}

\begin{example}
Define $\alpha = \sqrt{2}/2$ (i.e. $a_1 = 1$ and $a_k = 2$ for all $k > 1$) and 
$a = f(\alpha) = 1/2$. This example is a so-called complementary symmetric Rote sequence, which has several nice properties. For instance, it is invariant under the coordinatewise bit-flip operation, and again there is an interesting relation to Sturmian sequences. The proof of Theorem 3 in \cite{rote} shows that a sequence $x$ is in $X$ if and only if its sequence $y$ of consecutive differences defined by $y(n) = |x(n) - x(n - 1)|$ is Sturmian for rotation number $2 \sqrt{2} - 2$.

\end{example}

Both of the above examples fail to have any $3$-block Gordon structure by \Cref{badcase}.
In systems without $3$-block Gordon structure, the most common argument for proving absence of eigenvalues is via so-called $2$-block Gordon structure (which requires only $x(i) = x(i + q_k)$ for $-q_k < i \leq 0$) and analysis of trace maps; see for instance a proof for Sturmian subshifts in \cite{sturm}. 

it is easy to check that all two-interval circle rotations have almost uniform $2$-block Gordon structure, and so the remaining step would be analysis of traces (specifically, of products of transfer matrices). This is usually proved by some sort of recursion generating words in the language; for instance, the Sturmian shift from Example~\ref{ex1} has a very simple substitutive structure coming from the recursion $w_{k+1} = w_k w_k w_{k-1}$. However, in both cases, the substitutive structure for the $2$-interval coding subshift is much more complicated and does not seem to yield a simple argument using existing techniques.

\bibliographystyle{plain}
\bibliography{2int}

\end{document}